\newtheorem{theorem}{Theorem}
\newtheorem{remark}{Remark}
\newtheorem{corollary}{Corollary}
\numberwithin{equation}{section}
\newcommand{\EE}{\mathbb F}
\renewcommand{\O}{\mathbb O}
\renewcommand{\H}{\mathbb H}
\newcommand{\C}{\mathbb C}
\newcommand{\R}{\mathbb R}
\newcommand{\N}{\mathbb N}
\newcommand*{\abs}[1]{\left\lvert{#1}\right\rvert} 
\newcommand*{\norm}[1]{\left\lVert{#1}\right\rVert} 
\newcommand*{\conj}[1]{#1^*} 
\newcommand*{\innerL}[2]{\left({#1},{#2}\right)}
\DeclareMathOperator*{\Ima}{\mathrm{Im}}
\DeclareMathOperator*{\Rea}{\mathrm{Re}}
\newcommand{\e}{\mathbf e}
\renewcommand{\i}{\mathbf i}
\renewcommand{\j}{\mathbf j}
\newcommand{\kk}{\mathbf k}
\newcommand{\x}{\mathbf x}
\newcommand{\y}{\mathbf y}
\newcommand{\f}{\mathbf f}
\newcommand{\de}{\mathrm d}
\newcommand*{\FourierX}[2]{\mathcal{F}_{\mathrm{#1}}\left\{#2\right\}}
\newcommand*{\IFourierX}[2]{\mathcal{F}^{-1}_{\mathrm{#1}}\left\{#2\right\}}
\newcolumntype{L}[1]{>{\raggedright\let\newline\\\arraybackslash\hspace{0pt}}m{#1}}
\newcolumntype{C}[1]{>{\centering\let\newline\\\arraybackslash\hspace{0pt}}m{#1}}
\newcolumntype{R}[1]{>{\raggedleft\let\newline\\\arraybackslash\hspace{0pt}}m{#1}}
\begin{document}

\begin{center}
\Large
\textbf{A Generalization of the Octonion Fourier Transform \\ to 3-D Octonion-Valued Signals \\ -- Properties and Possible Applications \\ to 3-D LTI Partial Differential Systems}\\
\end{center}


\begin{center}
\begin{tabular}{cc}
	\textbf{{\L}ukasz B{\l}aszczyk\textsuperscript} \\
	l.blaszczyk@mini.pw.edu.pl 
\medskip \\
	Faculty of Mathematics and Information Science \\
	Warsaw University of Technology \\
	ul. Koszykowa 75, 00-662 Warszawa, Poland
\end{tabular}
\end{center}

\bigskip\noindent
\textbf{Keywords}: octonion Fourier transform, Cayley-Dickson numbers, hypercomplex algebras, multidimensional linear time-invariant systems.

\bigskip
\normalsize
\begin{abstract}
The paper is devoted to the development of the octonion Fourier transform (OFT) theory initiated in 2011 in articles by Hahn and Snopek. It is also a continuation and generalization of earlier work by B{\l}aszczyk and Snopek, where they proved few essential properties of the OFT of real-valued functions, e.g. symmetry properties. The results of this article focus on proving that the OFT is well-defined for octonion-valued functions and almost all well-known properties of classical (complex) Fourier transform (e.g. argument scaling, modulation and shift theorems) have their direct equivalents in octonion setup. Those theorems, illustrated with some examples, lead to the generalization of another result presented in earlier work, i.e. Parseval and Plancherel Theorems, important from the signal and system processing point of view. Moreover, results presented in this paper associate the OFT with 3-D LTI systems of linear PDEs with constant coefficients. Properties of the OFT in context of signal-domain operations such as derivation and convolution of $\mathbb{R}$-valued functions will be stated. There are known results for the QFT, but they use the notion of other hypercomplex algebra, i.e. double-complex numbers. Considerations presented here require defining other higher-order hypercomplex structure, i.e. quadruple-complex numbers. This hypercomplex generalization of the Fourier transformation provides an excellent tool for the analysis of 3-D LTI systems.
\end{abstract}

\section{Introduction}
\label{sec:1}

Fourier analysis is one of the fundamental tools in signal and image processing. Fourier series and Fourier transform enable us to look at the concept of signal in a~dual manner -- by studying its properties in the time domain (or in the space domain in case of images), where it is represented by amplitudes of the samples (or pixels), or by investigating it in the frequency domain, where the signal can be represented by the infinite sums of complex harmonic functions, each with different frequency and amplitude \cite{Allen2004}.
\medskip

The classical signal theory deals with real- or complex-valued time series (or images). However, in some practical applications, signals are represented by more abstract structures, e.g. hypercomplex algebras \cite{Ell2014,Grigoryan2018,HahnSnopek2016,Snopek2015}. A classic example is the use of them in the processing of color images (where there are at least three color components)~\cite{Ell2014,Grigoryan2018}, but also in the analysis of multispectral data (e.g. in satellite images where not only visible light is recorded, but also other frequency ranges)~\cite{Lazendic2018,Lazendic2018a}. Quaternions and octonions deserve special attention in this considerations. They are examples of Cayley-Dickson (C-D) algebras~\cite{Dickson}. C-D algebras are defined by a recursive procedure, so-called Cayley-Dickson construction. They are algebras of the order $2^N$ ($N\in\N$) over the field of real numbers~$\R$. Each C-D algebra is created from the previous one and contains all previous algebras as proper sub-algebras.
\medskip

Recently, hypercomplex algebras drew scientists' attention due to their numerous applications, among others in the study of neural networks~\cite{Popa2016,Popa2018,Wu2019}, in the analysis of color and multispectral images~\cite{Ell2014,Gao2014,Gomes2017,Grigoryan2018,Lazendic2018,Lazendic2018a,Sheng2018}, in biomedical signal processing~\cite{Delsuc1988,Klco2017}, in fluid mechanics~\cite{Demir2016} or in general signal processing~\cite{HahnSnopek2016,Snopek2015,Wang2017}. Quaternions may be used in few different ways -- to describe a~vector-valued signal (with three or four coordinates) of one variable, i.e. \begin{align*}
u(t) = u_{0}(t) + u_{1}(t) \cdot\i + u_{2}(t) \cdot\j + u_{3}(t) \cdot\kk,
\qquad u_{0}, u_{1}, u_{2}, u_{3}\colon \R\to\R,
\end{align*} or to analyse a~scalar or vector-valued signal of two variables, i.e. $u\colon\R^2\to\R$ or \linebreak $u\colon\R^2\to\O$. The basic tool in the second approach is the quaternion Fourier transform (QFT) \cite{Bulow2001}: \begin{equation} \label{eq:1_1}
U_{\mathrm{QFT}}(f_1,f_2) = \int_{\R}\int_{\R} u(t_1,t_2)e^{-2\pi\i f_1 t_1}e^{-2\pi\j f_2 t_2}\,\de t_1\de t_2.
\end{equation} It allows us (in contrast to the classical two-dimmensional Fourier transform) to analyse two dimensions of the sampling grid independently. Each time-like dimension can be associated with a~different dimension of the four-dimensional quaternion space, while the complex transform mixes those two dimensions. It also allows us to study some symmetries present in certain signals (images), what was impossible before \cite{Ell2014}. Similarly, octonions are used to describe scalar or vector-valued signals of one or three variables.
\medskip

In the last few years some generalizations of the Fourier transform (defined as in \eqref{eq:1_1}) to the octonion and higher-order algebras appeared in the literature~\cite{HahnSnopek2011a,Snopek2009,Snopek2011,Snopek2012,Snopek2015}. They are defined on the basis of the Cayley-Dickson algebras and called the Cayley-Dickson Fourier transforms. The main goal of this paper is further development of such generalization based on the Cayley-Dickson algebra of order 8 (octonions). Analysis of the current state of knowledge on applications of octonions in the signal processing shows some areas previously unexplored or requiring thorough theoretical and experimental studies, although some gaps have recently been filled~\cite{BlaszczykECMI2018,BlaszczykEUSIPCO2018,BlaszczykSnopek2017,Lian2019}.  
\medskip

Properties of the quaternion Fourier transform (defined by \eqref{eq:1_1}) are well studied in the literature and it is fairly easy to notice some analogies to the properties of classical (complex) Fourier transform of functions of two variables \cite{Ell1992}. They enable us to use the Fourier transform in the analysis of some two-dimensional linear time-invariant systems described by systems of partial differential equations with constant coefficients \cite{Ell1993}. In our previous investigations \cite{BlaszczykSnopek2017} we were able to show that the OFT is well defined for scalar (real-valued) functions of three variables (i.e. we proved the inverse transform theorem). In our research we also derived some properties of the OFT, analogous to the properties of the classical (complex) and quaternion Fourier transform, e.g. symmetry properties (analogue to the Hermitian symmetry properties), shift theorem, Plancherel and Parseval theorems, and Wiener-Khintchine theorem. Proofs of the those theorems were based on the previous research of Hahn and Snopek, who used the fact that real--valued functions can be expressed as a~sum of components of different parity \cite{HahnSnopek2011a}. Despite these works, the state of modern knowledge about octonion Fourier transform is negligible and requires a~thorough extension. This seems important especially in the context of new applications that have appeared in recent years (and which we described earlier in this section) -- there is a tendency to describe the results of practical experiments, but without adequate theoretical justification.
\medskip

Some of the results presented in this paper have been signaled in earlier works~\cite{BlaszczykECMI2018,BlaszczykEUSIPCO2018}, here we present a~broader view of these issues and give details of the proofs. We also provide some new results, mainly regarding the use of classical transformation techniques for calculating the OFT, as well as regarding the differentiation of the octonion transform.
\medskip

The paper is organized as follows. 
In Section~\ref{sec:2} we recall the octonion algebra, some of its basic properties and the definition of the octonion Fourier transform, as well as the proof of its well-posedness. We also introduce the notion of the quadruple-complex algebra. 
In Section~\ref{sec:3} we focus on deriving some important properties of the OFT, e.g.~argument scaling, modulation and shift theorems, relationship between the OFT of a~function and the OFT of its partial derivative, differentiation of the OFT and the convolution theorem. 
Those considerations lead to some remarks on~applying the OFT to the analysis of 3-D linear time-invariant systems in Section~\ref{sec:5}, which also show the advantages of using OFT over classical transformation. 
The paper is conculed in Section~\ref{sec:6} with a~short discussion of obtained results.

\section{Basic definitions}
\label{sec:2}

In this section, we introduce the definitions and the theorems necessary to present the main results of this work regarding the further properties of the octonion Fourier transform.

\subsection{Algebra of octonions}
An octonion $o\in\O$ is defined, according to the Cayley-Dickson construction, as the ordered pair of quaternions \cite{Dickson}: \begin{align*}
o=(q_0,q_1),\quad \text{where}\quad
q_0 = r_0 + r_1\, \e_1 + r_2\, \e_2 + r_3\, \e_3,\;\;\; 
q_1 = r_4 + r_5\, \e_1 + r_6\, \e_2 + r_7\, \e_3\in\H
\end{align*} (we denote the quaternion imaginary units as $\e_1$, $\e_2$ and $\e_3$ instead of traditional $\i$, $\j$ and $\kk$). Rules of octonion multiplication are given by the general Cayley-Dickson formula \begin{align} \label{eq:2_0}
(q_0, q_1)\cdot (p_0, p_1) = (q_0\cdot p_0 - p_1^*\cdot q_1,\;\;  p_1\cdot q_0 + q_1\cdot p_0^*), \qquad q_0,q_1,p_0,p_1\in\H,
\end{align} where multiplication of quaternions is defined as in~\cite{Rodman2014} (it can be defined also by the formula~\eqref{eq:2_0} if we treat a~quaternion as an ordered pair of complex numbers) and ${}^*$ is quaternion conjugate. Applying those rules of multiplication (which can be presented in the form of Tab. \ref{tab:2_1}) we get four new imaginary units and octonions can be writen as \begin{align*}
o &= \underbrace{r_0 + r_1 \e_1 + r_2 \e_2 + r_3 \e_3}_{=q_0} + (\underbrace{r_4 + r_5 \e_1 + r_6 \e_2 + r_7 \e_3}_{=q_1})\cdot \e_4 \nonumber \\
&= r_0 + r_1 \e_1 + r_2 \e_2 + r_3 \e_3 + r_4 \e_4 + r_5 \e_5 + r_6 \e_6 + r_7 \e_7. 
\end{align*} Number $r_0\in\R$ is called the real part of $o$ (denoted as $\Rea o$) and the pure imaginary octonion $r_1\, \e_1 + r_2\, \e_2 + \ldots + r_7\, \e_7$ is called the imaginary part of $o$ (and denoted as $\Ima o$). Octonions form a \emph{non-associative} and a \emph{non-commutative} algebra, which means that in general, for $o_1,\, o_2,\, o_3\in\O$ \begin{align*}
&(o_1\cdot o_2)\cdot o_3 \neq o_1\cdot (o_2\cdot o_3), \qquad o_1\cdot o_2 \neq o_2\cdot o_1.
\end{align*} On the other hand, it is true that for any $o_1,\,o_2\in\O$ we have \begin{align*}
& (o_1 \cdot o_2)^* = o_2^* \cdot o_1^*, 
\end{align*} where $^*$ is the octonion conjugate, i.e. \begin{align*}
o^* = r_0 - r_1 \e_1 - r_2 \e_2 - r_3 \e_3 - r_4 \e_4 - r_5 \e_5 - r_6 \e_6 - r_7 \e_7.
\end{align*} As in case of complex numbers or quaternions, octonion conjugation is linear and we have $o^{**}=o$, which means that it is an involution. For any $o_1,\,o_2\in\O$ we also have that \begin{align}
 o_1 \cdot (o_1 \cdot o_2) = (o_1 \cdot o_1) \cdot o_2, \qquad\qquad
 (o_1 \cdot o_2) \cdot o_2 = o_1 \cdot (o_2 \cdot o_2) \label{eq:2_3} 
\end{align} and \begin{align}
o_1\cdot(o_2\cdot o_1) = (o_1\cdot o_2)\cdot o_1, \label{eq:2_3a} 
\end{align} which means that the algebra of octonions is \emph{alternative} (equations~\eqref{eq:2_3}) and \emph{flexible} (equation~\eqref{eq:2_3a}). 
\medskip

\renewcommand{\arraystretch}{1.2}
\begin{table}[t]
{\footnotesize\begin{center}
\begin{tabular}{|C{7mm}||C{7mm}|C{7mm}|C{7mm}|C{7mm}|C{7mm}|C{7mm}|C{7mm}|C{7mm}|}
\hline
$\cdot$ &    $1$ &  $\e_1$ &  $\e_2$ &  $\e_3$ &  $\e_4$ &  $\e_5$ &  $\e_6$ &  $\e_7$ \\
\hline 
\hline
    $1$ &    $1$ &  $\e_1$ &  $\e_2$ &  $\e_3$ &  $\e_4$ &  $\e_5$ &  $\e_6$ &  $\e_7$ \\
\hline
  $\e_1$ &  $\e_1$ &   $-1$ &  $\e_3$ & $-\e_2$ &  $\e_5$ & $-\e_4$ & $-\e_7$ &  $\e_6$ \\
\hline
  $\e_2$ &  $\e_2$ & $-\e_3$ &   $-1$ &  $\e_1$ &  $\e_6$ &  $\e_7$ & $-\e_4$ & $-\e_5$ \\
\hline
  $\e_3$ &  $\e_3$ &  $\e_2$ & $-\e_1$ &   $-1$ &  $\e_7$ & $-\e_6$ &  $\e_5$ & $-\e_4$ \\
\hline
  $\e_4$ &  $\e_4$ & $-\e_5$ & $-\e_6$ & $-\e_7$ &   $-1$ &  $\e_1$ &  $\e_2$ &  $\e_3$ \\
\hline
  $\e_5$ &  $\e_5$ &  $\e_4$ & $-\e_7$ &  $\e_6$ & $-\e_1$ &   $-1$ & $-\e_3$ &  $\e_2$ \\
\hline
  $\e_6$ &  $\e_6$ &  $\e_7$ &  $\e_4$ & $-\e_5$ & $-\e_2$ &  $\e_3$ &   $-1$ & $-\e_1$ \\
\hline
  $\e_7$ &  $\e_7$ & $-\e_6$ &  $\e_5$ &  $\e_4$ & $-\e_3$ & $-\e_2$ &  $\e_1$ &   $-1$ \\
\hline
\end{tabular}\end{center}}
\vspace{2mm}
\caption{Multiplication rules in octonion algebra.}
\label{tab:2_1}
\end{table}

In complex numbers we have the trigonometric form of a~number and in octonion algebra we can define a~similar formula for any nonzero octonion $o\in\O$: \begin{align} \label{eq:2_3p}
o = \abs{o}\cdot(\cos\theta + \boldsymbol{\mu}\cdot\sin\theta),
\end{align} where $\abs{o} = \sqrt{o\cdot o^*}$ is octonion norm, $\boldsymbol{\mu} = \frac{\Ima o}{\abs{\Ima o}}$ is pure imaginary octonion and $\theta\in\R$ is the solution of the system of equations \begin{align*}
\cos\theta = \frac{\Rea o}{\abs{o}},\qquad \sin\theta = \frac{\abs{\Ima o}}{\abs{o}}.
\end{align*} To formulate the exponential form of an octonion, we have to define octonion exponential function first. Similarly as for the complex numbers and quaternions~\cite{Rodman2014}, we use the infinite series. For any $o\in\O$, \begin{align*}
e^o = \exp(o) := \sum\limits_{k=0}^{\infty} \frac{o^k}{k!}.
\end{align*} It can be shown that if we denote $\mathbf{o} = \Ima o$, then \begin{align*}
e^o = e^{\Rea o} \left(\cos \abs{\mathbf{o}} + \frac{\mathbf{o}}{\abs{\mathbf{o}}} \sin\abs{\mathbf{o}}\right).
\end{align*} One should keep in mind that the fundamental multiplicative identity is in general not valid for octonions. For any $o_1,o_2\in\O$ we have \begin{align*}
e^{o_1 + o_2} = e^{o_1}\cdot e^{o_2}\qquad\text{if and only if}\qquad o_1 \cdot o_2 = o_2\cdot o_1,
\end{align*} which follows from the fact that the octonion multiplication is non-commutative.
\medskip

From the above considerations it immediately follows that the exponential form of an octonion $o\in\O$, $o\neq 0$, can be defined as \begin{align*}
o = \abs{o}\cdot e^{\theta\boldsymbol{\mu}},
\end{align*} where $\theta$ and $\boldsymbol{\mu}$ are defined as in~\eqref{eq:2_3p}. We can also generalize well-known formulas for trigonometric functions, i.e. for any $\alpha\in\R$ we have that \begin{align} \label{eq:2_4}
\cos\alpha = \frac{1}{2}\left(e^{\boldsymbol{\mu}\alpha} + e^{-\boldsymbol{\mu}\alpha}\right), \qquad \sin\alpha = \frac{1}{2\boldsymbol{\mu}}\left(e^{\boldsymbol{\mu}\alpha} - e^{-\boldsymbol{\mu}\alpha}\right),
\end{align} where $\boldsymbol{\mu}$ is any octonion such that $\abs{\boldsymbol{\mu}}=1$ and $\Rea\boldsymbol{\mu}=0$ (i.e. $\boldsymbol{\mu}$ is pure unitary octonion). It should be noted that every non-zero octonion is invertible and for pure unitary octonions $\boldsymbol{\mu}$ we have $\boldsymbol{\mu}^{-1} = -\boldsymbol{\mu}$.

\subsection{Algebra of quadruple-complex numbers}

Many formulas presented in Section~\ref{sec:3}, concerning the Fourier transforms, are quite complicated due to the fact that octionion multiplication is non-associative and non-commutative. Inspired by~\cite{Ell1993}, we introduce \textit{the algebra of quadruple-complex numbers}, which will allow us to reformulate all presented properties and show them in a simpler form, very similar to those well-known for classic Fourier transform. 
\medskip

Like octonions, we will define the algebra of order $8$ over the field of real numbers and each element of this algebra will be identified with the $8$-tuple of real numbers, i.e. \begin{align*}
p = p_0+p_1\e_1+p_2\e_2+p_3\e_3+p_4\e_4+p_5\e_5+p_6\e_6+p_7\e_7\in\EE, \quad p_0,\ldots,p_7\in\R.
\end{align*} Addition in~$\EE$ is defined in a~classical way -- element-wise. Before we define the multiplication, recall that in Cayley-Dickson construction, every octonion can be writen as an ordered pair of quaternions. We are going now one step further and rewrite an octonion as a~quadruple of complex numbers: \begin{align}
p &= (p_0+p_1\e_1) + (p_2+p_3\e_1)\e_2 + (p_4+p_5\e_1)\e_4 + (p_6+p_7\e_1)\e_2\e_4 \nonumber\\
&= s_0 + s_1\e_2 + s_2\e_4 + s_3\e_2\e_4, \label{eq:7_7}
\end{align} where $s_0,\ldots,s_3\in\C$ and multiplication is done from left to right.
\medskip

We will identify each element of~$\EE$ with a~quadruple of complex numbers $(s_0,s_1,s_2,s_3)$. Every element of $\EE$ will correspond to exactly one octonion defined by~\eqref{eq:7_7}. Multiplication $\odot$ is given by the formula \begin{align*}
(s_0,s_1,s_2,s_3)\odot(t_0,t_1,t_2,t_3) 
&= ( s_0t_0-s_1t_1-s_2t_2+s_3t_3,\quad s_0t_1+s_1t_0-s_2t_3-s_3t_2, \nonumber\\
&\phantom{=}\quad  s_0t_2+s_2t_0-s_1t_3-s_3t_1,\quad s_0t_3+s_3t_0+s_1t_2+s_2t_1) 
\end{align*} for every $(s_0,s_1,s_2,s_3), (t_0,t_1,t_2,t_3)\in\EE$. After straightforward computation we get the multiplication rules table, like we have in case of octonions (Tab.~\ref{tab:7_1}). We can see that imaginary units in $\EE$ don't follow the same rules that applied to octonions, i.e. \begin{align*}
\e_1\odot\e_1=\e_2\odot\e_2=-\e_3\odot\e_3=\e_4\odot\e_4=-\e_5\odot\e_5=-\e_6\odot\e_6=\e_7\odot\e_7=-1.
\end{align*} There is a~similarity to double-complex numbers \cite{Podwojne}, which have been used in the analysis of 2-D systems~\cite{Ell1993} and (though not so named) in hypercomplex representation of 2D nuclear magnetic resonance spectra~\cite{Delsuc1988}.
\medskip

\renewcommand{\arraystretch}{1.2}
\begin{table}[t]
{\footnotesize\begin{center}
\begin{tabular}{|C{7mm}||C{7mm}|C{7mm}|C{7mm}|C{7mm}|C{7mm}|C{7mm}|C{7mm}|C{7mm}|}
\hline
 $\odot$ &    $1$ &  $\e_1$ &  $\e_2$ &  $\e_3$ &  $\e_4$ &  $\e_5$ &  $\e_6$ &  $\e_7$ \\
\hline\hline
     $1$ &    $1$ &  $\e_1$ &  $\e_2$ &  $\e_3$ &  $\e_4$ &  $\e_5$ &  $\e_6$ &  $\e_7$ \\
\hline
   $\e_1$ &  $\e_1$ &   $-1$ &  $\e_3$ & $-\e_2$ &  $\e_5$ & $-\e_4$ &  $\e_7$ & $-\e_6$ \\
\hline
   $\e_2$ &  $\e_2$ &  $\e_3$ &   $-1$ & $-\e_1$ &  $\e_6$ &  $\e_7$ & $-\e_4$ & $-\e_5$ \\
\hline
   $\e_3$ &  $\e_3$ & $-\e_2$ & $-\e_1$ &    $1$ &  $\e_7$ & $-\e_6$ & $-\e_5$ &  $\e_4$ \\
\hline
   $\e_4$ &  $\e_4$ &  $\e_5$ &  $\e_6$ &  $\e_7$ &   $-1$ & $-\e_1$ & $-\e_2$ & $-\e_3$ \\
\hline
   $\e_5$ &  $\e_5$ & $-\e_4$ &  $\e_7$ & $-\e_6$ & $-\e_1$ &    $1$ & $-\e_3$ &  $\e_2$ \\
\hline
   $\e_6$ &  $\e_6$ &  $\e_7$ & $-\e_4$ & $-\e_5$ & $-\e_2$ & $-\e_3$ &    $1$ &  $\e_1$ \\
\hline
   $\e_7$ &  $\e_7$ & $-\e_6$ & $-\e_5$ &  $\e_4$ & $-\e_3$ &  $\e_2$ &  $\e_1$ &   $-1$ \\
\hline
\end{tabular}\end{center}}
\vspace{2mm}
\caption{Multiplication rules in $\EE$.}
\label{tab:7_1}
\end{table}

The multiplication in~$\mathbb{F}$ is commutative and associative. One can also show that there are no zero divisors in $\EE$ (i.e. if $s\,\odot\, t=0$, then $s=0$ or $t=0$), however not every non-zero element of $\EE$ has its $\odot$-inverse. If inverse of $(s_0,s_1,s_2,s_3)$ exists then it is the only one and is equal to \begin{align}
(s_0,s_1,s_2,s_3)^{-1} &= \frac{1}{\delta}\bigg(s_0(s_0^2+s_1^2+s_2^2-s_3^2)+2s_1s_2s_3,\;\; -s_1(s_0^2+s_1^2-s_2^2+s_3^2)-2s_0s_2s_3, \nonumber\\
&\!\!\!\!\!\!\!\! -s_2(s_0^2-s_1^2+s_2^2+s_3^2)-2s_0s_1s_3,\;\; s_3(-s_0^2+s_1^2+s_2^2+s_3^2)+2s_0s_1s_2\bigg), \label{eq:7_9}
\end{align} where \begin{align*}
\delta = \big((s_0-s_3)^2+(s_1+s_2)^2\big)\big((s_0+s_3)^2+(s_1-s_2)^2\big).
\end{align*} Elements of $\EE$ for which $\delta=0$ (e.g. $(1,0,0,\pm1)=1\pm\e_6\in\mathbb{F}$) have no $\odot$-inverse. One can easily notice that the equation~\eqref{eq:7_9} is similar to formula (3.4) in~\cite{Ell1993} for double-complex numbers, but every numer in (3.4) was a~real number. In~\eqref{eq:7_9} we have (in the general case) complex numbers.
\medskip

%

\subsection{Octonion Fourier transform}

Definition of the octonion Fourier transform (OFT) of the real-valued function of three variables was introduced in~\cite{Snopek2009} and used in later publications concerning theory of hypercomplex analytic functions~\cite{HahnSnopek2011a,Snopek2009,Snopek2011,Snopek2012,Snopek2015}. In~\cite{BlaszczykSnopek2017} we proved that the OFT of real-valued function is well-defined and has some interesting properties (such as the analogue of the Hermitian symetry). In~\cite{BlaszczykEUSIPCO2018} we stated that the inverse transform formula is correct for the octonion-valued functions and we presented the sketch of the proof. In the further part of this section we will present previously omitted details. 
\medskip

Consider the octonion-valued function of three variables $u\colon\R^3\to\O$, i.e. \begin{align*}
u(\x)=u_0(\x)+u_1(\x)\e_1+\ldots+u_7(\x)\e_7, \quad
u_i\colon\R^3\to\R,\,i=0,\ldots,7, \quad
\x = (x_1,x_2,x_3).
\end{align*} The octonion Fourier Transform of the integrable (in Lebesgue sense) function $u$ is given by the formula \begin{equation} \label{eq:oct_def}
U_{\mathrm{OFT}}(\f) = \int_{\R^3} u(\x) e^{-\e_1 2\pi f_1 x_1} e^{-\e_2 2\pi f_2 x_2} e^{-\e_4 2\pi f_3 x_3}\,\de\x.
\end{equation} Recall that the octonion algebra is non-associative, so it is necessary to note that the multiplication in the above integrals is done from left to right. As we already explained in~\cite{BlaszczykSnopek2017}, choice and order of imaginary units in the exponents is not accidental. In order for the integral \eqref{eq:oct_def} to exist, it is be necessary for the function to be at least integrable. In general, conditions of existence of the OFT are the same as for the classical (complex) Fourier transform. It is worth noting here the advantage of using octonion transformation over the use of classical transformation. Unlike the classical Fourier transformation, the OFT kernel is no longer a one-dimensional function, but it changes independently in three orthogonal directions. This is an analogous observation to that which was made in the case of the quaternion Fourier transform~\cite{bulow,Bulow2001}.

In this section, we will focus on the invertibility of the OFT. For the special case of the real-valued functions we proved the following theorem in~\cite{BlaszczykSnopek2017}. Here we prove the general version of the theorem, where the tested function has octonion values.
\medskip

\begin{theorem} \label{th:3_1}
Let $u\colon \R^3\to\O$ be continuous and let both $u$ and its OFT be integrable (in Lebesgue sense). Then for all $\x\in\R^3$ we have \begin{equation*}
u(\x) = \int_{\R^3} U_{\mathrm{OFT}}(\f) e^{\e_4 2\pi f_3 x_3} e^{\e_2 2\pi f_2 x_2} e^{\e_1 2\pi f_1 x_1} \,\de\f
\end{equation*} (where multiplication is performed from left to right).
\end{theorem}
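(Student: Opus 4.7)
The plan is to reduce the octonion-valued case to the real-valued inversion theorem already established in \cite{BlaszczykSnopek2017}. First I would decompose $u = \sum_{i=0}^{7} u_i\e_i$, where each component function $u_i\colon\R^3\to\R$ is continuous and, since $|u_i|\le|u|$ pointwise, integrable. Real scalars lie in the centre of $\O$, so $\R$-linearity of the Lebesgue integral yields
\begin{equation*}
U_{\mathrm{OFT}}(\f) \;=\; \sum_{i=0}^{7} \int_{\R^3} u_i(\x)\,\bigl[\e_i\cdot\bigl(e^{-\e_1 2\pi f_1 x_1}\, e^{-\e_2 2\pi f_2 x_2}\, e^{-\e_4 2\pi f_3 x_3}\bigr)\bigr]\,\de\x.
\end{equation*}

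The second step is to expose the structure of the kernel. Expanding each exponential via the octonion Euler formula as $e^{-\e_k 2\pi f_k x_k} = \cos(2\pi f_k x_k) - \e_k\sin(2\pi f_k x_k)$ and multiplying the three factors strictly from left to right using Table~\ref{tab:2_1} produces an expansion of the form $\sum_{j=0}^{7}\varepsilon_j\,c_j^{(1)}(x_1,f_1)\,c_j^{(2)}(x_2,f_2)\,c_j^{(3)}(x_3,f_3)\,\e_{\tau(j)}$, with $\varepsilon_j\in\{\pm1\}$, each $c_j^{(k)}$ either a cosine or a sine in $(x_k,f_k)$, and $\tau$ a fixed permutation of $\{0,\dots,7\}$. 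Pre-multiplying by $\e_i$ then simply reshuffles these eight terms, again via Table~\ref{tab:2_1}. An entirely analogous decomposition applies to the reverse-order kernel $e^{\e_4 2\pi f_3 x_3}e^{\e_2 2\pi f_2 x_2}e^{\e_1 2\pi f_1 x_1}$, differing only in controlled sign flips.

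With this in hand, I would substitute the decomposed $U_{\mathrm{OFT}}(\f)$ into the candidate inversion formula and apply Fubini's theorem -- justified by the integrability of $u$ and of $U_{\mathrm{OFT}}$ -- to interchange the $\de\x'$ and $\de\f$ integrations. The integrand then splits into a finite sum of real-valued integrands, each of the form $u_i(\x')$ multiplied by a product of trigonometric factors in $(\x',\x,\f)$ and a single octonion basis element. For each $i$, the $\f$-integration reduces, after iterated application of Fubini across the three frequency variables, to the real-valued three-dimensional inversion identity already established in \cite{BlaszczykSnopek2017} applied to $u_i$; this returns $u_i(\x)\e_i$, and summation in $i$ recovers $u(\x)$.

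The main obstacle is the bookkeeping of signs and indices induced by the non-associativity and non-commutativity of octonion multiplication, in particular whenever $\e_i$ is carried past the chain of three exponentials. The statement relies on the prescribed ordering of the exponentials, and on its reversal in the inverse formula, precisely so that these manipulations cancel. A conceptually cleaner alternative would be to reinterpret the kernel inside the quadruple-complex algebra $\EE$ of Section~\ref{sec:2}: since $\odot$ is commutative and associative, the product of exponentials factorises transparently and, via the identification~\eqref{eq:7_7}, the inversion reduces to three iterated one-dimensional Fourier inversions. I expect the direct route to be serviceable but tedious; the $\EE$-route would be shorter, at the price of moving back and forth between the two algebras.
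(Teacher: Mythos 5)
Your overall strategy is sound and, at the top level, coincides with the paper's: decompose $u=\sum_i u_i\e_i$, use $\R$-linearity and Fubini, and reduce each component to a classical Fourier inversion statement. Where you diverge is in the mechanism for handling the non-associativity. The paper does not expand the kernel into trigonometric components at all; instead it proves three ``partial associativity'' identities \eqref{eq:6_7}--\eqref{eq:6_9}, valid for an arbitrary basis element $\e_i$ in front, which allow the adjacent pair $e^{-\e_4 2\pi f_3 y_3}\cdot e^{\e_4 2\pi f_3 x_3}$ to be regrouped and collapsed by the one-dimensional classical Fourier Integral Theorem (Theorem~\ref{th:3_2}), peeling off one variable at a time ($x_3$, then $x_2$, then $x_1$) until only $\e_i\cdot u_i(\x)$ remains. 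Your route --- full Euler expansion of both kernels, yielding on the order of $64$ trigonometric terms per component, followed by an appeal to the real-valued inversion identity of \cite{BlaszczykSnopek2017} --- can be pushed through, but the step you defer as ``bookkeeping'' (that pre-multiplication by $\e_i$ reshuffles the eight kernel terms in exactly the way that makes the sum collapse back to $u_i(\x)\e_i$ rather than to some other signed combination) is precisely the mathematical content of \eqref{eq:6_7}--\eqref{eq:6_9}; asserting that the expansion ``reduces to the real-valued identity applied to $u_i$'' is not quite accurate, since after the reshuffle the terms are not literally those of that identity and the cancellation pattern must be verified for each $i$. The paper's identities buy a three-line verification per variable in place of a $64$-term cancellation. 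Finally, be cautious with your proposed $\EE$-shortcut: the identification \eqref{eq:7_7} makes the ordered kernel product $e^{-\e_1\alpha_1}e^{-\e_2\alpha_2}e^{-\e_4\alpha_3}$ agree under $\cdot$ and $\odot$, but left multiplication by a general basis element does not (e.g.\ $\e_5\cdot\e_1=\e_4$ in $\O$ while $\e_5\odot\e_1=-\e_4$ in $\EE$), so the octonion-valued case does not transfer to $\EE$ for free.
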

\medskip

The assumptions given above are quite strong and in many cases can be mitigated. A~number of other conditions are known in the literature for the classic Fourier transform to be invertible and the equivalent of the above formula occurs~\cite{Allen2004,Duoandikoetxea2001}. Then we usually deal with equality almost everywhere, or the integral is understood in the sense of the principal value. In the case of an octonion transformation, these conditions are identical and detailed considerations are left to the reader. The abovementioned result follows from Fourier Integral Theorem~\cite{Duoandikoetxea2001}, which we state under the same assumptions as in Theorem~\ref{th:3_1}. 
\medskip

\begin{theorem} \label{th:3_2}
Let $u\colon \R^n\to\R$ be continuous and let both $u$ and its OFT be integrable (in Lebesgue sense). Then \begin{equation*}
u(\x) = \int_{\R^n} \int_{\R^n} u(\y) e^{2\pi \i \, \f\cdot(\x-\y)}\,\de\y \,\de\f,
\end{equation*} where $\i=\e_1$ is complex imaginary unit, $\x=(x_1,\ldots,x_n)$, $\y=(y_1,\ldots,y_n)$, $\f=(f_1,\ldots,f_n)$ and $\cdot$ is classic scalar product.
\end{theorem}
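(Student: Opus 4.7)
The plan is to regularize the formal double integral by a Gaussian factor in order to justify the Fubini interchange that would otherwise fail (since the inner integral $\int_{\R^n} e^{2\pi\i\,\f\cdot(\x-\y)}\,\de\f$ does not converge absolutely). Fix $\x\in\R^n$ and, for $t>0$, consider
\begin{equation*}
I_t(\x) \;=\; \int_{\R^n}\!\int_{\R^n} u(\y)\, e^{2\pi\i\,\f\cdot(\x-\y)}\, e^{-\pi t^2\abs{\f}^2}\,\de\y\,\de\f.
\end{equation*}
Because of the Gaussian factor, the absolute value of the integrand is dominated by $\abs{u(\y)}\cdot e^{-\pi t^2\abs{\f}^2}$, which is a product of $L^1$ functions and hence integrable on $\R^n\times\R^n$; Fubini's theorem therefore applies.

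Writing $U(\f):=\int_{\R^n} u(\y)\,e^{-2\pi\i\,\f\cdot\y}\,\de\y$ and performing the $\y$-integration first gives
\begin{equation*}
I_t(\x) \;=\; \int_{\R^n} U(\f)\,e^{2\pi\i\,\f\cdot\x}\,e^{-\pi t^2\abs{\f}^2}\,\de\f .
\end{equation*}
As $t\to 0^+$ the integrand is dominated by $\abs{U(\f)}$, which is in $L^1(\R^n)$ by assumption, so dominated convergence yields $I_t(\x)\to\int_{\R^n} U(\f)\,e^{2\pi\i\,\f\cdot\x}\,\de\f$. Performing the $\f$-integration first in $I_t$ instead, after completing the square in the exponent and using the one-dimensional identity $\int_{\R} e^{-\pi s^2}\,\de s = 1$, one obtains
\begin{equation*}
I_t(\x) \;=\; \int_{\R^n} u(\y)\,\phi_t(\x-\y)\,\de\y, \qquad \phi_t(\mathbf{z}):=t^{-n}\, e^{-\pi\abs{\mathbf{z}}^2/t^2}.
\end{equation*}

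The family $\{\phi_t\}_{t>0}$ is the standard Gaussian approximate identity: $\phi_t\geq 0$, $\int_{\R^n}\phi_t=1$, and for each $\delta>0$ the uniform bound $\phi_t(\mathbf{z})\leq t^{-n} e^{-\pi\delta^2/t^2}$ holds on $\{\abs{\mathbf{z}}\geq\delta\}$. Writing $u(\y)=u(\x)+(u(\y)-u(\x))$ and splitting the convolution over $\abs{\y-\x}<\delta$ and $\abs{\y-\x}\geq\delta$, the near-field contribution is bounded by $\sup_{\abs{\y-\x}<\delta}\abs{u(\y)-u(\x)}$, which is arbitrarily small by continuity of $u$ at $\x$; the far-field contribution is controlled by the above uniform bound on $\phi_t$ together with $u\in L^1(\R^n)$ and the decay $\int_{\abs{\mathbf{z}}\geq\delta}\phi_t\to 0$. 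Hence $I_t(\x)\to u(\x)$. Equating the two limits of $I_t(\x)$ gives $u(\x)=\int_{\R^n} U(\f)\,e^{2\pi\i\,\f\cdot\x}\,\de\f$, which is exactly the stated formula after substituting the definition of~$U$.

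The main obstacle is the last step: one has to dispatch the near-field remainder using only pointwise continuity of $u$ at $\x$, while the far-field remainder requires combining integrability of $u$ with the explicit super-polynomial decay of $\phi_t$ off the origin. Everything else in the argument reduces to Fubini's theorem and the explicit Gaussian integral; the Gaussian weight is the natural choice because it is its own Fourier transform up to rescaling, which makes the computation of $K_t(\mathbf{z}):=\int e^{2\pi\i\,\f\cdot \mathbf{z}} e^{-\pi t^2\abs{\f}^2}\,\de\f$ reduce to a product of one-dimensional Gaussians.
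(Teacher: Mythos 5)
Your proof is correct. Note that the paper does not actually prove Theorem~\ref{th:3_2}: it is stated as the classical Fourier Integral Theorem and delegated to the literature (the reference to Duoandikoetxea), serving only as the scalar ingredient in the proof of Theorem~\ref{th:3_1}. What you have written is the standard Gauss--Weierstrass summability argument for Fourier inversion, and it is complete: Fubini is legitimately justified by the Gaussian damping, the identity $\int_{\R^n} e^{2\pi\i\,\f\cdot\mathbf{z}}e^{-\pi t^2\abs{\f}^2}\,\de\f = t^{-n}e^{-\pi\abs{\mathbf{z}}^2/t^2}$ is the correct self-reciprocity of the Gaussian, the dominated-convergence passage on the frequency side uses exactly the hypothesis $U\in L^1$, and the approximate-identity argument on the space side uses exactly continuity of $u$ at $\x$ together with $u\in L^1$ (the near-field/far-field split with the uniform bound $\phi_t\le t^{-n}e^{-\pi\delta^2/t^2}$ off a $\delta$-ball is the right way to handle a merely pointwise-continuous $L^1$ function, as opposed to the easier Schwartz-class setting). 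This is essentially the proof one finds in the cited textbook, so there is nothing to reconcile with the paper beyond the observation that the paper takes this statement as known.
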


\begin{proof} [of Theorem~\ref{th:3_1}]
We need to prove the following equation \begin{align*}
u(\x) &= \int_{\R^3} \int_{\R^3} u(\y) \cdot e^{-\e_1 2\pi f_1 y_1} \cdot e^{-\e_2 2\pi f_2 y_2} \cdot e^{-\e_4 2\pi f_3 y_3} \cdot e^{\e_4 2\pi f_3 x_3} \cdot e^{\e_2 2\pi f_2 x_2} \cdot e^{\e_1 2\pi f_1 x_1} \,\de\y\de\f,
\end{align*} where octonion multiplication is done from left to right. The first step is to rewrite the function~$u$ as a~sum $u=u_0+u_1\e_1+\ldots+u_7\e_7$ and use the distributive law on the algebra of octonions. It follows that the claim of the theorem is equivalent to the system of equations \begin{align}
u_0(\x) &= \int_{\R^3} \int_{\R^3} u_0(\y) \cdot e^{-\e_1 2\pi f_1 y_1} \cdot e^{-\e_2 2\pi f_2 y_2} \cdot e^{-\e_4 2\pi f_3 y_3} \nonumber\\
&\qquad\qquad\qquad \cdot e^{\e_4 2\pi f_3 x_3} \cdot e^{\e_2 2\pi f_2 x_2} \cdot e^{\e_1 2\pi f_1 x_1} \,\de\y\,\de\f, \label{eq:3_5} \\
u_i(\x)\e_i &= \int_{\R^3} \int_{\R^3} u_i(\y)\e_i \cdot e^{-\e_1 2\pi f_1 y_1} \cdot e^{-\e_2 2\pi f_2 y_2} \cdot e^{-\e_4 2\pi f_3 y_3} \nonumber \\
&\qquad\qquad\qquad \cdot e^{\e_4 2\pi f_3 x_3} \cdot e^{\e_2 2\pi f_2 x_2} \cdot e^{\e_1 2\pi f_1 x_1} \,\de\y\,\de\f,\quad i=1,\ldots,7. \label{eq:3_6}
\end{align}

Proof of~\eqref{eq:3_5} can be found in~\cite{BlaszczykSnopek2017} and we only need to prove~\eqref{eq:3_6}. We follow the same steps as in the original proof and use the fact (derived by straightforward calculations) that for any imaginary unit $\e_i$, $i=1,\ldots,7$, we have \begin{align}
&\Big(\big((\e_i\cdot e^{-\e_1 2\pi f_1 y_1}) \cdot e^{-\e_2 2\pi f_2 y_2}\big) \cdot e^{-\e_4 2\pi f_3 y_3}\Big) \cdot e^{\e_4 2\pi f_3 x_3} \nonumber \\
&\qquad = \big((\e_i\cdot e^{-\e_1 2\pi f_1 y_1}) \cdot e^{-\e_2 2\pi f_2 y_2}\big) \cdot \big(e^{-\e_4 2\pi f_3 y_3} \cdot e^{\e_4 2\pi f_3 x_3}\big), \label{eq:6_7}\\
&\big((\e_i\cdot e^{-\e_1 2\pi f_1 y_1}) \cdot e^{-\e_2 2\pi f_2 y_2}\big)\cdot  e^{\e_2 2\pi f_2 x_2} = (\e_i\cdot e^{-\e_1 2\pi f_1 y_1})\cdot( e^{-\e_2 2\pi f_2 y_2} \cdot e^{\e_2 2\pi f_2 x_2}) \label{eq:6_8}\\
&(\e_i\cdot e^{-\e_1 2\pi f_1 y_1}) \cdot e^{\e_1 2\pi f_1 x_1} = \e_i\cdot ( e^{-\e_1 2\pi f_1 y_1} \cdot e^{\e_1 2\pi f_1 x_1}). \label{eq:6_9}
\end{align} Then, using~\eqref{eq:6_7}--\eqref{eq:6_9}, Fubini's Theorem and~Theorem~\ref{th:3_2} we have for $i=1,\ldots,7$ \begin{align*}
&\int_{\R^3} \int_{\R^3}  \bigg(\bigg(\Big(\big((u_i(\y)\e_i\cdot e^{-\e_1 2\pi f_1 y_1}) \cdot e^{-\e_2 2\pi f_2 y_2}\big)\cdot e^{-\e_4 2\pi f_3 y_3}\Big) \\
&\qquad\qquad\qquad\qquad\qquad \cdot e^{\e_4 2\pi f_3 x_3}\bigg)\cdot e^{\e_2 2\pi f_2 x_2}\bigg)\cdot e^{\e_1 2\pi f_1 x_1} \,\de\y\de\f \displaybreak[3]\\
&\quad \!\!\overset{\eqref{eq:6_7}}{=} \int_{\R^2} \int_{\R^2} \bigg(\bigg(\big((\e_i\cdot e^{-\e_1 2\pi f_1 y_1}) \cdot e^{-\e_2 2\pi f_2 y_2}\big) \\
&\qquad\qquad\qquad\qquad\qquad \cdot\left( \int_\R \int_\R u_i(\y) \cdot e^{-\e_4 2\pi f_3 y_3} \cdot e^{\e_4 2\pi f_3 x_3} \,\de y_3\de f_3 \right)\bigg)\cdot e^{\e_2 2\pi f_2 x_2}\bigg)\\
&\qquad\qquad\qquad\qquad\qquad \cdot e^{\e_1 2\pi f_1 x_1} \,\de y_1\de y_2\de f_1\de f_2 \displaybreak[3]\\
&\quad\!\!\!\!\overset{\text{Th.~\ref{th:3_2}}}{=} \int_{\R^2} \int_{\R^2} \bigg(\Big(\big((\e_i\cdot e^{-\e_1 2\pi f_1 y_1}) \cdot e^{-\e_2 2\pi f_2 y_2}\big) \cdot u_i(y_1,y_2,x_3) \Big) \\
&\qquad\qquad\qquad\qquad\qquad \cdot e^{\e_2 2\pi f_2 x_2}\bigg)
\cdot e^{\e_1 2\pi f_1 x_1} \,\de y_1\de y_2\de f_1\de f_2 \displaybreak[3]\\
&\quad = \int_{\R^2} \int_{\R^2} u_i(y_1,y_2,x_3) \cdot \Big(\big((\e_i\cdot e^{-\e_1 2\pi f_1 y_1}) \cdot e^{-\e_2 2\pi f_2 y_2}\big) \cdot e^{\e_2 2\pi f_2 x_2}\Big) \\
&\qquad\qquad\qquad\qquad\qquad \cdot e^{\e_1 2\pi f_1 x_1} \,\de y_1\de y_2\de f_1\de f_2 \displaybreak[3]\\
&\quad \!\!\overset{\eqref{eq:6_8}}{=} \int_{\R} \int_{\R} \bigg((\e_i\cdot e^{-\e_1 2\pi f_1 y_1}) \cdot\left( \int_{\R}\int_{\R} u_i(y_1,y_2,x_3) \cdot e^{-\e_2 2\pi f_2 y_2} \cdot e^{\e_2 2\pi f_2 x_2}\, \de y_2\de f_2\right)\bigg) \\
&\qquad\qquad\qquad\qquad\qquad \cdot e^{\e_1 2\pi f_1 x_1} \,\de y_1\de f_1 \displaybreak[3]\\
&\quad\!\!\!\!\overset{\text{Th.~\ref{th:3_2}}}{=} \int_{\R} \int_{\R} \big((\e_i\cdot e^{-\e_1 2\pi f_1 y_1}) \cdot u_i(y_1,x_2,x_3)\big) \cdot e^{\e_1 2\pi f_1 x_1} \,\de y_1\de f_1 \displaybreak[3]\\
&\quad = \int_{\R} \int_{\R} u_i(y_1,x_2,x_3)\cdot (\e_i\cdot e^{-\e_1 2\pi f_1 y_1}) \cdot e^{\e_1 2\pi f_1 x_1} \,\de y_1\de f_1 \displaybreak[3]\\
&\quad \!\!\overset{\eqref{eq:6_9}}{=} \e_i\cdot \int_{\R} \int_{\R} u_i(y_1,x_2,x_3) \cdot e^{-\e_1 2\pi f_1 y_1}\cdot e^{\e_1 2\pi f_1 x_1} \,\de y_1\de f_1 \displaybreak[3]\\
&\quad\!\!\!\!\overset{\text{Th.~\ref{th:3_2}}}{=} \e_i \cdot u_i(x_1,x_2,x_3).
\end{align*} It concludes the proof.
\end{proof}

It is worth noting that the above theorem was independently proved also in a recent article~\cite{Lian2019}, in which the author used other methods. 
\medskip

Before we proceed to discuss the properties of octonion Fourier transforms, we should start with the basic result formulated below. From now on, we will assume that all the functions under consideration have well-defined octonion Fourier transforms. We will use the convention that the OFT of function $u$ is denoted by $U_{\mathrm{OFT}}$ or $\FourierX{OFT}{u}$. Analogously, the classic (complex) Fourier transform of $u$ will be denoted by $\FourierX{CFT}{u}$.
\medskip

\begin{theorem} \label{the:3_3}
Octonion Fourier transform is $\R$-linear operation, i.e. \begin{align} \label{eq:3_11}
\FourierX{OFT}{a\cdot u+b\cdot v} = a\cdot\FourierX{OFT}{u}+b\cdot\FourierX{OFT}{v},\quad a,b\in\R.
\end{align}
\end{theorem}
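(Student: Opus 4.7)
The plan is to unfold the definition of the OFT applied to $a\cdot u + b\cdot v$ and then argue that $\mathbb{R}$-linearity reduces to two very tame operations: distributivity of octonion multiplication over addition, and the fact that the real scalars $a,b$ commute and associate with every octonion (despite the general non-associativity of $\mathbb{O}$). This last point is not deep, but it is the one worth stating explicitly: since $\mathbb{O}$ is by construction an $\mathbb{R}$-algebra, the multiplication is $\mathbb{R}$-bilinear, which means $(a\cdot o_1)\cdot o_2 = a\cdot(o_1\cdot o_2) = o_1\cdot(a\cdot o_2)$ for every $a\in\mathbb{R}$ and $o_1,o_2\in\mathbb{O}$. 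Hence the real coefficients pass freely through the ``left-to-right'' chain of exponentials in the kernel of \eqref{eq:oct_def}.

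First I would plug $a\cdot u + b\cdot v$ into \eqref{eq:oct_def}, writing
\begin{align*}
\FourierX{OFT}{a u+b v}(\f) = \int_{\R^3} \big(a\cdot u(\x)+b\cdot v(\x)\big)\cdot e^{-\e_1 2\pi f_1 x_1}\cdot e^{-\e_2 2\pi f_2 x_2}\cdot e^{-\e_4 2\pi f_3 x_3}\,\de\x,
\end{align*}
with the convention that the product is evaluated from left to right. Then I would distribute the sum under the integral (octonions form a distributive ring, so this step is immediate at the algebraic level), and use the $\mathbb{R}$-bilinearity remark above to move $a$ and $b$ past the three exponential factors, so that the integrand becomes $a\cdot\big(u(\x)\cdot E(\x,\f)\big) + b\cdot\big(v(\x)\cdot E(\x,\f)\big)$, where $E(\x,\f)$ denotes the left-to-right product of the three exponentials.

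Finally, I would invoke linearity of the Lebesgue integral. Since the integrand is octonion-valued, one interprets the integral component-wise against the basis $1,\e_1,\ldots,\e_7$; linearity of the scalar Lebesgue integral on each of the eight real components, together with the already-established fact (used implicitly in the preceding inversion theorem) that such component-wise integration commutes with $\mathbb{R}$-linear combinations, yields
\begin{align*}
\FourierX{OFT}{a u+b v}(\f) = a\cdot\int_{\R^3} u(\x)\cdot E(\x,\f)\,\de\x + b\cdot\int_{\R^3} v(\x)\cdot E(\x,\f)\,\de\x = a\cdot\FourierX{OFT}{u}(\f)+b\cdot\FourierX{OFT}{v}(\f),
\end{align*}
which is exactly \eqref{eq:3_11}. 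There is no real obstacle here; the only point at which one must resist the reflex to worry is the non-associativity of $\mathbb{O}$, which is harmless precisely because the scalars being moved around are real.
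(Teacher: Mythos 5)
Your proof is correct. Note that the paper states this theorem without giving any proof at all (it is treated as a basic fact, with only a follow-up remark that $\O$-linearity fails), so there is nothing to compare against; your argument -- distributivity, $\R$-bilinearity of the octonion product letting the real scalars pass through the left-to-right chain of exponentials, and component-wise linearity of the Lebesgue integral -- is exactly the standard argument the authors implicitly rely on, and you correctly identify the one point worth making explicit, namely that non-associativity is irrelevant when the factors being moved are real.
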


It should be noted here that, unlike the classical (complex) Fourier transform (and also quaternion Fourier transform), OFT is not linear in general (to be more precise -- it is not $\O$-linear), i.e. property~\eqref{eq:3_11} is not true for any $a,b\in\O$. This is due to the fact that the octonion multiplication is not associative.
\medskip

For many real- or complex-valued functions the form of the classic Fourier transform is well known. To calculate the octonion Fourier transform of such function, we can use the relationship between these transformations instead of using formula~\eqref{eq:oct_def}. In particular, the following theorem holds, which is the generalization of the result of~\cite{Snopek2012}, where it was proved for real-valued functions. This result was originally stated in~\cite{BlaszczykEUSIPCO2018}, here we complete the details of the proof.
\medskip

\begin{theorem} \label{the:3_4a}
Let $u\colon\R^3\to\C$, $U=\FourierX{CFT}{u}$ and $U_{\mathrm{OFT}}=\FourierX{OFT}{u}$. Then \begin{align} \label{eq:3_12}
U_{\mathrm{OFT}}(f_1,f_2,f_3) &= \frac{1}{4}\big(U(f_1,f_2,f_3)\cdot(1-\e_3)+U(f_1,-f_2,f_3)\cdot(1+\e_3)\big)\cdot(1-\e_5) \nonumber\\
&\quad + \frac{1}{4}\big(U(f_1,f_2,-f_3)\cdot(1-\e_3)+U(f_1,-f_2,-f_3)\cdot(1+\e_3)\big)\cdot(1+\e_5) 
\end{align} where octonion multiplication is done from left to right.
\end{theorem}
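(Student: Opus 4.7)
The approach is to reduce the OFT of a complex-valued function to a combination of classical $\e_1$-exponentials, then reassemble. The crucial building block is the identity
$$e^{-\e_2\cdot 2\pi f_2 x_2} = \tfrac{1}{2}\bigl(e^{\e_1\cdot 2\pi f_2 x_2}(1+\e_3) + e^{-\e_1\cdot 2\pi f_2 x_2}(1-\e_3)\bigr),$$
and its analogue for $e^{-\e_4\cdot 2\pi f_3 x_3}$ with $\e_5$ in place of $\e_3$. I would derive these from Euler's formula $e^{-\e_k\alpha} = \cos\alpha - \e_k\sin\alpha$, the expansions in~\eqref{eq:2_4} with $\boldsymbol{\mu}=\e_1$, and the elementary identities $\e_3 A = \conj{A}\e_3$ for $A\in\C$ (which hold by quaternion-subalgebra associativity) and analogously $\e_5 A = \conj{A}\e_5$, noting that $(1,\e_1,\e_4,\e_5)$ is also a quaternion subalgebra.

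Next, by Fubini's theorem and the observation that the rightmost factors depend on disjoint sets of variables, I would write $U_{\mathrm{OFT}}$ as an iterated integral in the order $x_1, x_2, x_3$. The innermost integration in $x_1$ yields the partial classical FT $u^{(1)}(f_1,x_2,x_3)$, still complex-valued. Substituting the first decomposition, the $x_2$-integration remains entirely inside the associative quaternion subalgebra $(1,\e_1,\e_2,\e_3)$, so the associativity $u^{(1)}\cdot(e^{\pm\e_1\cdot 2\pi f_2 x_2}(1\pm\e_3)) = (u^{(1)}\cdot e^{\pm\e_1\cdot 2\pi f_2 x_2})(1\pm\e_3)$ lets one pull the $(1\pm\e_3)$ factors outside and recognise the remaining integrals as partial classical FTs, obtaining
$$\hat u_{12}(f_1,f_2,x_3) = \tfrac{1}{2}\bigl(U^{(12)}(f_1,-f_2,x_3)(1+\e_3) + U^{(12)}(f_1,f_2,x_3)(1-\e_3)\bigr),$$
a quaternion-valued intermediate transform.

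For the outer integration, I substitute the second decomposition into $\hat u_{12}\cdot e^{-\e_4\cdot 2\pi f_3 x_3}$ and distribute, obtaining four integrands of the form $(V_{\epsilon_1}(x_3)(1+\epsilon_1\e_3))\cdot(e^{\epsilon_2\e_1\cdot 2\pi f_3 x_3}(1+\epsilon_2\e_5))$ for $\epsilon_1,\epsilon_2\in\{+,-\}$, where $V_+(x_3)=U^{(12)}(f_1,-f_2,x_3)$ and $V_-(x_3)=U^{(12)}(f_1,f_2,x_3)$. The theorem's right-hand side corresponds instead to the differently grouped integrand $((V_{\epsilon_1}(x_3)\cdot e^{\epsilon_2\e_1\cdot 2\pi f_3 x_3})(1+\epsilon_1\e_3))(1+\epsilon_2\e_5)$, whose $x_3$-integral is immediately $(U(f_1,\mp f_2,\mp f_3)(1+\epsilon_1\e_3))(1+\epsilon_2\e_5)$ by pulling constants out to the right.

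The hard part is that these two groupings are \emph{not} equal pointwise: the triple formed by a quaternion, a complex number and an octonion of the form $1+\e_5$ does not lie in a common associative subalgebra, so the difference is a non-trivial associator. I would compute this associator directly via Table~\ref{tab:2_1} and observe that its only non-zero contributions lie in the $\e_2$ and $\e_3$ components and are proportional to $\sin(2\pi f_3 x_3)\cdot V_{\epsilon_1,0}(x_3)$ and $\sin(2\pi f_3 x_3)\cdot V_{\epsilon_1,1}(x_3)$, with the overall sign picking up exactly one factor of $\epsilon_2$. Therefore, for each fixed $\epsilon_1$ the contributions from $\epsilon_2=+1$ and $\epsilon_2=-1$ cancel identically in the integrand, not merely after integration. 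This cancellation allows the substitution of the associated-on-the-left version of the integrand term by term inside the full four-term sum; the resulting $x_3$-integrals evaluate to $U(f_1,\mp f_2,\mp f_3)$, and grouping the four pieces by the rightmost factor $(1\pm\e_5)$ yields exactly the formula stated in Theorem~\ref{the:3_4a}.
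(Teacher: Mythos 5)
Your proof is correct, but it takes a genuinely different route from the paper's. The paper starts from the classical transform $U$ (all kernel factors of the form $e^{-\e_1 2\pi f_jx_j}$) and converts it into $U_{\mathrm{OFT}}$ one kernel factor at a time: it splits $U$ into even and odd parts in $f_2$, flips the sign of $f_3$ and right-multiplies by $\e_3$ to turn the middle factor $-\e_1\sin(2\pi f_2x_2)$ into $\e_2\sin(2\pi f_2x_2)$ using only alternativity, then repeats the manoeuvre with $f_3$ and $\e_5$; no associator ever has to be evaluated because of the order in which the product is assembled. You go in the opposite direction: you decompose the OFT kernel itself via $e^{-\e_2\alpha}=\tfrac12\big(e^{\e_1\alpha}(1+\e_3)+e^{-\e_1\alpha}(1-\e_3)\big)$ and its $\e_4/\e_5$ analogue (both identities are valid, since $\e_1\e_3=-\e_2$ and $\e_1\e_5=-\e_4$), reduce everything to classical partial transforms, and then must confront the fact that the final regrouping is non-associative. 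The crux of your argument checks out against Table~\ref{tab:2_1}: the difference between the two groupings of the $x_3$-integrand is $\pm2\epsilon_2\sin(2\pi f_3x_3)\big(V_{\epsilon_1,0}\,\e_2+V_{\epsilon_1,1}\,\e_3\big)$, which is odd in $\epsilon_2$ and independent of $\epsilon_2$ otherwise, so for each fixed $\epsilon_1$ the $\epsilon_2=\pm1$ contributions cancel identically, exactly as you claim; the four surviving integrals then assemble into \eqref{eq:3_12} with the correct sign pattern. Your route is closer in spirit to the idempotent-type decompositions the paper uses elsewhere (Corollary~\ref{cor:3_4b}, Theorem~\ref{the:4_4a}) and makes the $(1\pm\e_3)(1\pm\e_5)$ structure of the answer visible from the outset, at the price of an explicit associator computation that is the delicate step and must not be skipped; the paper's route avoids that computation entirely but only reveals the final formula after the last substitution and regrouping.
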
 

\begin{remark}
Equation (39) proved in~\cite{Snopek2012} may look slightly different from \eqref{eq:3_12}, but after straightforward computation and application of the Hermitian symmetry of the Fourier transform of the real-valued functions we get the abovementioned formula.
\end{remark}

\begin{proof} [of Theorem~\ref{the:3_4a}]
We carefully follow and modify steps presented in~\cite{Snopek2012}. From the definition of the classical Fourier transform we get \begin{align*}
U(f_1,f_2,f_3) = \int_{\R^3} u(\x) e^{-\e_1\alpha_1} e^{-\e_1\alpha_2} e^{-\e_1\alpha_3}\,\de\x,
\end{align*} where $\alpha_j=2\pi f_jx_j$, $j=1,2,3$. From the equivalent definition of sine and cosine functions we get \begin{align}
\frac{1}{2}\big(U(f_1,f_2,f_3)+U(f_1,-f_2,f_3)\big) &= \int_{\R^3} u(\x) e^{-\e_1\alpha_1} (\cos\alpha_2) e^{-\e_1\alpha_3}\,\de\x, \label{eq:3_13}\\
\frac{1}{2}\big(U(f_1,f_2,f_3)-U(f_1,-f_2,f_3)\big) &= \int_{\R^3} u(\x) e^{-\e_1\alpha_1} (-\e_1\sin\alpha_2) e^{-\e_1\alpha_3}\,\de\x. \label{eq:3_14}
\end{align} By changing the sign of $f_3$ in~\eqref{eq:3_14} and multiplying (from the left) by $\e_3$ we get\begin{align} \label{eq:3_15}
\frac{1}{2}\big(U(f_1,f_2,-f_3)-U(f_1,-f_2,-f_3)\big)\e_3 &= \int_{\R^3} u(\x) e^{-\e_1\alpha_1} (\e_2\sin\alpha_2) e^{-\e_1\alpha_3}\,\de\x,
\end{align} which follows from the fact that \begin{align*}
\Big(\big((u\cdot e^{-\e_1\alpha_1})\cdot\e_1\big)\cdot e^{\e_1\alpha_3}\Big)\cdot\e_3 = \big((u\cdot e^{-\e_1\alpha_1})\cdot(\e_1\cdot\e_3)\big)\cdot e^{-\e_1\alpha_3}
\end{align*} (from the fact that octonion multiplication is alternative). Subtracting \eqref{eq:3_15} from \eqref{eq:3_13} we then obtain \begin{align*}
&\frac{1}{2}\big(U(f_1,f_2,f_3)+U(f_1,-f_2,f_3)\big)+\frac{1}{2}\big(U(f_1,-f_2,-f_3)-U(f_1,f_2,-f_3)\big)\e_3 \\
&\qquad\qquad\qquad\qquad\qquad\qquad\qquad\qquad\qquad\qquad =\int_{\R^3} u(\x) e^{-\e_1\alpha_1} e^{-\e_2\alpha_2} e^{-\e_1\alpha_3}\,\de\x.
\end{align*} We introduce the following notation: \begin{align} 
&V(f_1,f_2,f_3) \nonumber\\
&\quad = \frac{1}{2}\big(U(f_1,f_2,f_3)+U(f_1,-f_2,f_3)\big)+\frac{1}{2}\big(U(f_1,-f_2,-f_3)-U(f_1,f_2,-f_3)\big)\e_3. \label{eq:3_15a}
\end{align} By following similar steps as before we get  \begin{align}
\frac{1}{2}\big(V(f_1,f_2,f_3)+V(f_1,f_2,-f_3)\big) &= \int_{\R^3} u(\x) e^{-\e_1\alpha_1} e^{-\e_2\alpha_2}(\cos\alpha_3) \,\de\x, \label{eq:3_16}\\
\frac{1}{2}\big(V(f_1,f_2,f_3)-V(f_1,f_2,-f_3)\big) &= \int_{\R^3} u(\x) e^{-\e_1\alpha_1} e^{-\e_2\alpha_2}(-\e_1\sin\alpha_3) \,\de\x. \label{eq:3_17}
\end{align} 
As earlier we change the sign of $f_2$ in~\eqref{eq:3_17} and multiply (from the left) by~$\e_5$ and obtain \begin{align} \label{eq:3_18}
\frac{1}{2}\big(V(f_1,-f_2,f_3)-V(f_1,-f_2,-f_3)\big)\e_5 &= \int_{\R^3} u(\x) e^{-\e_1\alpha_1} e^{-\e_2\alpha_2}(\e_4\sin\alpha_3) \,\de\x
\end{align} (again from the fact that octonion multiplication is alternative). By subtracting equation~\eqref{eq:3_18} from~\eqref{eq:3_16} we get \begin{align} \label{eq:3_19}
&\frac{1}{2}\big(V(f_1,f_2,f_3)+V(f_1,f_2,-f_3)\big)+\frac{1}{2}\big(V(f_1,-f_2,-f_3)-V(f_1,-f_2,f_3)\big)\e_5 \nonumber\\
&\qquad\qquad\qquad\qquad\qquad\qquad\qquad\qquad\qquad\qquad =\int_{\R^3} u(\x) e^{-\e_1\alpha_1} e^{-\e_2\alpha_2} e^{-\e_4\alpha_3}\,\de\x.
\end{align} We conclude the proof by substituting equation~\eqref{eq:3_15a} in~\eqref{eq:3_19} and regrouping all terms.
\end{proof}

In the general case of an octonion-valued function, well-known formulas for the classic Fourier transform can also be used. If we factor out the complex components of the octonion-valued function, we get \begin{align*}
u &= u_0 + u_1\e_1
 + u_2\e_2 + u_3\e_3
 + u_4\e_4 + u_5\e_5
 + u_6\e_6 + u_7\e_7 \\
&= (u_0 + u_1\e_1)
 + (u_2 + u_3\e_1)\e_2
 + (u_4 + u_5\e_1)\e_4
 + (u_6 + u_7\e_1)\e_2\e_4 \\
&=: v_0 + v_1\e_2 + v_2\e_4 + v_3\e_2\e_4
\end{align*} and $v_0,\ldots,v_3$ are complex-valued functions. Using Theorem~\ref{the:3_4a} we can easily calculate the OFTs of those functions, denote them by $V_0,\ldots,V_3$, respectively. By straightforward calculations we obtain the following properties of octonions: \begin{align}
&\big(((o\cdot\e_2)\cdot e^{-\e_1\alpha_1})\cdot e^{-\e_2\alpha_2}\big)\cdot e^{-\e_4\alpha_3} =
\big(((o\cdot e^{\e_1\alpha_1})\cdot e^{-\e_2\alpha_2})\cdot e^{\e_4\alpha_3}\big)\cdot\e_2, \displaybreak[3] \label{eq:oct1a}\\
&\big(((o\cdot\e_4)\cdot e^{-\e_1\alpha_1})\cdot e^{-\e_2\alpha_2}\big)\cdot e^{-\e_4\alpha_3} =
\big(((o\cdot e^{\e_1\alpha_1})\cdot e^{\e_2\alpha_2})\cdot e^{-\e_4\alpha_3}\big)\cdot\e_4, \displaybreak[3] \label{eq:oct1b}\\
&\big(((o\cdot\e_2\cdot\e_4)\cdot e^{-\e_1\alpha_1})\cdot e^{-\e_2\alpha_2}\big)\cdot e^{-\e_4\alpha_3} =
\big(((o\cdot e^{-\e_1\alpha_1})\cdot e^{\e_2\alpha_2})\cdot e^{\e_4\alpha_3}\big)\cdot\e_2\cdot\e_4, \label{eq:oct1c}
\end{align} for any $o\in\O$ and $\alpha_1,\alpha_2,\alpha_3\in\R$. From those calculations, the corollary below immediately follows. 

\begin{corollary} \label{cor:3_4b}
Let $v_0,\ldots,v_3\colon\R^3\to\C$, $V_i = \FourierX{OFT}{v_i}$ and $u=v_0 + v_1\e_2 + v_2\e_4 + v_3\e_2\e_4$, $U_{\mathrm{OFT}} = \FourierX{OFT}{u}$. Then \begin{align}
U_{\mathrm{OFT}}(f_1,f_2,f_3) &=\phantom{+} V_0(\phantom{-}f_1,\phantom{-}f_2,\phantom{-}f_3)\phantom{\cdot\e_4}\; + V_1(-f_1,\phantom{-}f_2,-f_3)\cdot\e_2 \nonumber \\
& \phantom{=}\, + V_2(-f_1,-f_2,\phantom{-}f_3)\cdot\e_4 + V_3(\phantom{-}f_1,-f_2,-f_3)\cdot\e_2\cdot\e_4. \label{eq:cor3_4b}
\end{align}
\end{corollary}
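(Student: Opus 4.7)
The plan is to decompose $U_{\mathrm{OFT}}(\f)$ into four integrals corresponding to the four summands of $u = v_0 + v_1\e_2 + v_2\e_4 + v_3\e_2\e_4$. Using the distributive law in $\O$ to expand the integrand of \eqref{eq:oct_def} and applying the additivity of the OFT (the $a=b=1$ case of Theorem~\ref{the:3_3}), the transform splits as
\begin{equation*}
U_{\mathrm{OFT}}(\f) = \FourierX{OFT}{v_0}(\f) + \FourierX{OFT}{v_1\e_2}(\f) + \FourierX{OFT}{v_2\e_4}(\f) + \FourierX{OFT}{v_3\e_2\e_4}(\f).
\end{equation*}
The first summand is precisely $V_0(f_1,f_2,f_3)$ by definition, so it remains to handle the other three.

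For each of the three remaining pieces I would invoke the pointwise identities \eqref{eq:oct1a}, \eqref{eq:oct1b}, \eqref{eq:oct1c} with $o = v_i(\x) \in \C \subset \O$. Each identity is tailored so that, under the left-to-right multiplication convention of \eqref{eq:oct_def}, a leading factor $\e_2$, $\e_4$ or $\e_2\e_4$ can be moved through the three kernel exponentials to the far right, at the cost of flipping the signs of some of the exponents $\alpha_j = 2\pi f_j x_j$. Reading off the right-hand sides of the identities directly: \eqref{eq:oct1a} flips the signs of $\alpha_1$ and $\alpha_3$; \eqref{eq:oct1b} flips $\alpha_1$ and $\alpha_2$; and \eqref{eq:oct1c} flips $\alpha_2$ and $\alpha_3$. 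Once the trailing constant is pulled out of the integral to the right --- a move that is justified because right multiplication by a fixed octonion commutes with Lebesgue integration of octonion-valued functions --- the remaining inner integrals match \eqref{eq:oct_def} for $v_1$, $v_2$, $v_3$ evaluated at the sign-flipped frequencies, giving $V_1(-f_1,f_2,-f_3)\cdot\e_2$, $V_2(-f_1,-f_2,f_3)\cdot\e_4$, and $V_3(f_1,-f_2,-f_3)\cdot\e_2\cdot\e_4$, respectively. Collecting the four terms yields \eqref{eq:cor3_4b}.

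The main obstacle, such as it is, is purely one of sign bookkeeping: the non-associativity and non-commutativity of $\O$ have already been absorbed into identities \eqref{eq:oct1a}--\eqref{eq:oct1c}, so the proof reduces to matching the flipped exponents on their right-hand sides to the argument transformations claimed in \eqref{eq:cor3_4b}. A secondary point worth flagging is that Theorem~\ref{the:3_3} provides only $\R$-linearity, not $\O$-linearity, so one cannot simply factor $\e_2$, $\e_4$, or $\e_2\e_4$ out of $\FourierX{OFT}{\cdot}$ on the left --- this is precisely the reason the auxiliary identities are needed to first transport those units past the kernels to the right, where extracting them from the integral is unproblematic.
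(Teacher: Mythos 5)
Your proposal is correct and follows essentially the same route as the paper, which derives the corollary directly from the decomposition $u=v_0+v_1\e_2+v_2\e_4+v_3\e_2\e_4$ together with the identities \eqref{eq:oct1a}--\eqref{eq:oct1c}; your sign bookkeeping (which pairs of $\alpha_j$ flip in each identity and the resulting frequency arguments) matches \eqref{eq:cor3_4b} exactly. The only difference is that you spell out the splitting step and the justification for extracting the right factor from the integral, which the paper leaves implicit by saying the corollary ``immediately follows.''
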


Inverse octonion Fourier transformation can also be done using classic tools. However, the situation is more complicated from the beginning. In general, the OFT of any function is a function with octonion values. However, we will start, as in the case of forward transform, from the case when a function has OFT with complex values, but in the specific subfield of the octonion algebra, i.e. \begin{align*}
\C_{\e_4} = \{x_0 + x_4\e_4\in\O\colon x_0,x_4\in\R\}.
\end{align*} It is enough to note that in every subfield of this type we can define the classic Fourier transform. In the above case, we have formulas for forward and inverse transforms: \begin{align}
U(f_1,f_2,f_3) &= \int_{\R^3} u(x_1,x_2,x_3) e^{-\e_4\alpha_1} e^{-\e_4\alpha_2} e^{-\e_4\alpha_3}\,\de\x, \nonumber \\
u(x_1,x_2,x_3) &= \int_{\R^3} U(f_1,f_2,f_3) e^{\e_4\alpha_1} e^{\e_4\alpha_2} e^{\e_4\alpha_3}\,\de\f \label{eq:3_19a}
\end{align} where $u\colon\R^3\to\C_{\e_4}$, $\alpha_j=2\pi f_jx_j$, $j=1,2,3$. We have then, of course $U\colon\R^3\to\C_{\e_4}$.

\begin{theorem} \label{the:4_4a}
Let $u\colon\R^3\to\O$ be such that $U_{\mathrm{OFT}}=\FourierX{OFT}{u}\colon \R^3\to\C_{\e_4}$. Moreover, let \linebreak $\hat{u}=\IFourierX{CFT}{U_{\mathrm{OFT}}}$ (in the $\C_{\e_4}$ complex subfield of $\O$, i.e.~\eqref{eq:3_19a}). Then \begin{align} \label{eq:4_1}
u(x_1,x_2,x_3) &= \frac{1}{4}\big(\hat{u}(x_1,x_2,x_3)\cdot(1+\e_6)+\hat{u}(x_1,-x_2,x_3)\cdot(1-\e_6)\big)\cdot(1+\e_5) \nonumber\\
&\quad + \frac{1}{4}\big(\hat{u}(-x_1,x_2,x_3)\cdot(1+\e_6)+\hat{u}(-x_1,-x_2,x_3)\cdot(1-\e_6)\big)\cdot(1-\e_5) 
\end{align} where octonion multiplication is done from left to right.
\end{theorem}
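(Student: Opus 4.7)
The plan is to mirror the proof of Theorem~\ref{the:3_4a}, dualized to the inverse transform: starting from the representation of $\hat u$ in~\eqref{eq:3_19a} and manipulating sign-flipped spatial variables $x_1,x_2$ together with right-multiplications by $\e_6$ and $\e_5$. Writing $\alpha_j=2\pi f_jx_j$ throughout, the guiding observation is that by assumption $U_{\mathrm{OFT}}(\f)\in\C_{\e_4}$, so all three factors $e^{\e_4\alpha_j}$ together with $U_{\mathrm{OFT}}(\f)$ lie in the commutative, associative subfield $\C_{\e_4}$ and may be reordered at will. In particular I may rewrite \eqref{eq:3_19a} as $\hat u(\x)=\int_{\R^3}U_{\mathrm{OFT}}(\f)\,e^{\e_4\alpha_3}e^{\e_4\alpha_2}e^{\e_4\alpha_1}\,\de\f$, matching the kernel order in Theorem~\ref{th:3_1}. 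Throughout I abbreviate $W=U_{\mathrm{OFT}}(\f)\,e^{\e_4\alpha_3}\in\C_{\e_4}$.

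The first stage converts~$\alpha_2$. Taking $\tfrac12(\hat u(\x)\pm\hat u(x_1,-x_2,x_3))$ isolates the integrands $We^{\e_4\alpha_1}\cos\alpha_2$ and $We^{\e_4\alpha_1}\e_4\sin\alpha_2$. Right-multiplying the difference by $\e_6$ converts $\e_4\sin\alpha_2$ into $\e_2\sin\alpha_2$: all of $W$, $e^{\e_4\alpha_1}$, $\e_4$, $\e_6$ lie in the associative quaternion subalgebra $\{1,\e_2,\e_4,\e_6\}$, so $(We^{\e_4\alpha_1}\e_4)\e_6=We^{\e_4\alpha_1}(\e_4\e_6)=We^{\e_4\alpha_1}\e_2$. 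Distributivity then recombines the cosine and sine contributions into $We^{\e_4\alpha_1}e^{\e_2\alpha_2}$, yielding
\begin{align*}
\hat u(\x)(1+\e_6)+\hat u(x_1,-x_2,x_3)(1-\e_6)=2\int_{\R^3}We^{\e_4\alpha_1}e^{\e_2\alpha_2}\,\de\f,
\end{align*}
with the analogous identity for the $\hat u(-x_1,\cdot,\cdot)$ block (replacing $e^{\e_4\alpha_1}$ by $e^{-\e_4\alpha_1}$). The second stage multiplies these two blocks on the right by $(1+\e_5)$ and $(1-\e_5)$, sums, and divides by~$4$, reducing the right-hand side of~\eqref{eq:4_1} to $\tfrac12\int_{\R^3}[We^{\e_4\alpha_1}e^{\e_2\alpha_2}(1+\e_5)+We^{-\e_4\alpha_1}e^{\e_2\alpha_2}(1-\e_5)]\,\de\f$. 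Expanding by right-distributivity of $(1\pm\e_5)$ and collecting cosine and sine terms in $\alpha_1$ through the $\C_{\e_4}$-valued factor $W$, this integrand simplifies to
\begin{align*}
We^{\e_2\alpha_2}\cos\alpha_1\;+\;\bigl((W\e_4)e^{\e_2\alpha_2}\bigr)\e_5\,\sin\alpha_1,
\end{align*}
which I want to recognise as $We^{\e_2\alpha_2}e^{\e_1\alpha_1}=We^{\e_2\alpha_2}\cos\alpha_1+(We^{\e_2\alpha_2})\e_1\sin\alpha_1$.

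The sole nontrivial obstacle is therefore the key identity
\begin{align*}
\bigl((W\e_4)e^{\e_2\alpha_2}\bigr)\e_5\;=\;(We^{\e_2\alpha_2})\,\e_1\qquad(W\in\C_{\e_4}).
\end{align*}
This is exactly where the non-associativity of~$\O$ bites: $\e_2$, $\e_4$ and $\e_5$ do not jointly generate an associative subalgebra, so the tempting substitution $e^{\e_1\alpha_1}=\tfrac12[e^{\e_4\alpha_1}(1+\e_5)+e^{-\e_4\alpha_1}(1-\e_5)]$ plugged directly into $We^{\e_2\alpha_2}e^{\e_1\alpha_1}$ would fail. Instead I would prove the identity by real bilinearity: both sides are $\R$-linear in the components $(p,q)$ of $W=p+q\e_4$ and $\R$-linear in $(\cos\alpha_2,\sin\alpha_2)$ with $e^{\e_2\alpha_2}=\cos\alpha_2+\e_2\sin\alpha_2$, so it suffices to verify the four corner cases $(W,e^{\e_2\alpha_2})\in\{1,\e_4\}\times\{1,\e_2\}$, each of which collapses to a one-line computation from Tab.~\ref{tab:2_1} (for instance $\e_4\e_5=\e_1$ and $(\e_4\e_2)\e_5=-\e_6\e_5=-\e_3=\e_2\e_1$). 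Granted the identity, the remaining steps---distributivity, pulling real scalars through the octonion product, and Fubini to interchange summation and integration under the standing integrability hypothesis on $U_{\mathrm{OFT}}$---are routine, and the whole expression collapses to $\int_{\R^3}We^{\e_2\alpha_2}e^{\e_1\alpha_1}\,\de\f=u(\x)$ by Theorem~\ref{th:3_1}.
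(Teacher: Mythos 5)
Your proof is correct and follows essentially the same route as the paper's: a two-stage parity decomposition in $x_2$ and then $x_1$, with right-multiplications by $\e_6$ and $\e_5$ converting the $\e_4$-kernels into $\e_2$- and $\e_1$-kernels, finishing with an appeal to Theorem~\ref{th:3_1}. The only differences are bookkeeping — you exploit commutativity of $\C_{\e_4}$ to reorder the kernel (so the sign flips of $x_1$, $x_2$ inside each stage become unnecessary) and verify the one genuinely non-associative identity $\bigl((W\e_4)e^{\e_2\alpha_2}\bigr)\e_5=(We^{\e_2\alpha_2})\e_1$ by a four-case check, where the paper instead changes signs of variables and invokes alternativity; both reduce to the same multiplication-table facts.
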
 

\begin{proof}
From the modified definition of the classical Fourier transform~\eqref{eq:3_19a} we get \begin{align*}
\hat{u}(x_1,x_2,x_3) = \int_{\R^3} U_{\mathrm{OFT}}(\f) e^{\e_4\alpha_3} e^{\e_4\alpha_2} e^{\e_4\alpha_1}\,\de\f,
\end{align*} where $\alpha_j=2\pi f_jx_j$, $j=1,2,3$. From the equivalent definition of sine and cosine functions we get \begin{align}
\frac{1}{2}\big(\hat{u}(x_1,x_2,x_3)+\hat{u}(x_1,-x_2,x_3)\big) &= \int_{\R^3} U_{\mathrm{OFT}}(\f) e^{\e_4\alpha_3} (\cos\alpha_2) e^{\e_4\alpha_1}\,\de\f, \label{eq:4_13}\\
\frac{1}{2}\big(\hat{u}(x_1,x_2,x_3)-\hat{u}(x_1,-x_2,x_3)\big) &= \int_{\R^3} U_{\mathrm{OFT}}(\f) e^{\e_4\alpha_3} (\e_4\sin\alpha_2) e^{\e_4\alpha_1}\,\de\f. \label{eq:4_14}
\end{align} By changing the sign of $x_1$ in~\eqref{eq:4_14} and multiplying (from the left) by $\e_6$ we get\begin{align} \label{eq:4_15}
\frac{1}{2}\big(\hat{u}(-x_1,x_2,x_3)-\hat{u}(-x_1,-x_2,x_3)\big)\e_6 &= \int_{\R^3} U_{\mathrm{OFT}}(\f) e^{\e_4\alpha_3} (\e_2\sin\alpha_2) e^{\e_4\alpha_1}\,\de\f,
\end{align} which follows from the fact that \begin{align*}
\Big(\big((U_{\mathrm{OFT}}\cdot e^{\e_4\alpha_3})\cdot\e_4\big)\cdot e^{-\e_4\alpha_1}\Big)\cdot\e_6 = \big((U_{\mathrm{OFT}}\cdot e^{\e_4\alpha_3})\cdot(\e_2)\big)\cdot e^{\e_4\alpha_1}
\end{align*} (from the fact that octonion multiplication is alternative). Subtracting \eqref{eq:4_15} from \eqref{eq:4_13} we then obtain \begin{align*}
&\frac{1}{2}\big(\hat{u}(x_1,x_2,x_3)+\hat{u}(x_1,-x_2,x_3)\big)+\frac{1}{2}\big(\hat{u}(-x_1,x_2,x_3)-\hat{u}(-x_1,-x_2,x_3)\big)\e_6 \\
&\qquad\qquad\qquad\qquad\qquad\qquad\qquad\qquad\qquad\qquad =\int_{\R^3} U_{\mathrm{OFT}}(\f) e^{\e_4\alpha_3} e^{\e_2\alpha_2} e^{\e_4\alpha_1}\,\de\f.
\end{align*} We introduce the following notation: \begin{align} 
&w(x_1,x_2,x_3) \nonumber\\
&\quad = \frac{1}{2}\big(\hat{u}(x_1,x_2,x_3)+\hat{u}(x_1,-x_2,x_3)\big)+\frac{1}{2}\big(\hat{u}(-x_1,x_2,x_3)-\hat{u}(-x_1,-x_2,x_3)\big)\e_6. \label{eq:4_15a}
\end{align} By following similar steps as before we get  \begin{align}
\frac{1}{2}\big(w(x_1,x_2,x_3)+w(-x_1,x_2,x_3)\big) &= \int_{\R^3} U_{\mathrm{OFT}}(\f) e^{\e_4\alpha_3} e^{\e_2\alpha_2}(\cos\alpha_1) \,\de\f, \label{eq:4_16}\\
\frac{1}{2}\big(w(x_1,x_2,x_3)-w(-x_1,x_2,x_3)\big) &= \int_{\R^3} U_{\mathrm{OFT}}(\f) e^{\e_4\alpha_3} e^{\e_2\alpha_2}(\e_4\sin\alpha_1) \,\de\f. \label{eq:4_17}
\end{align} 
As earlier we change the sign of $x_2$ in~\eqref{eq:4_17} and multiply (from the left) by~$\e_5$ and obtain \begin{align} \label{eq:4_18}
\frac{1}{2}\big(w(x_1,-x_2,x_3)-w(-x_1,-x_2,x_3)\big)\e_5 &= \int_{\R^3} U_{\mathrm{OFT}}(\f) e^{\e_4\alpha_3} e^{\e_2\alpha_2}(\e_1\sin\alpha_1) \,\de\f
\end{align} (again from the fact that octonion multiplication is alternative). By subtracting equation~\eqref{eq:4_18} from~\eqref{eq:4_16} we get \begin{align} \label{eq:4_19}
&\frac{1}{2}\big(w(x_1,x_2,x_3)+w(-x_1,x_2,x_3)\big)+\frac{1}{2}\big(w(x_1,-x_2,x_3))-w(-x_1,-x_2,x_3)\big)\e_5 \nonumber\\
&\qquad\qquad\qquad\qquad\qquad\qquad\qquad\qquad\qquad\qquad =\int_{\R^3} U_{\mathrm{OFT}}(\f) e^{\e_4\alpha_3} e^{\e_2\alpha_2} e^{\e_1\alpha_1}\,\de\f.
\end{align} We conclude the proof by substituting equation~\eqref{eq:4_15a} in~\eqref{eq:4_19} and regrouping all terms.
\end{proof}

We can now return to the general case. Similarly as before, we factor out the complex ($\C_{\e_4}$) components of the octonion-valued function: \begin{align*}
U_{\mathrm{OFT}} &= U_0 + U_1\e_1 + U_2\e_2 + U_3\e_3 + U_4\e_4 + U_5\e_5 + U_6\e_6 + U_7\e_7 \\
&= (U_0 + U_4\e_4) + (U_1 - U_5\e_4)\e_1 + (U_2 - U_6\e_4)\e_2 + (U_3+U_7\e_4)\e_1\e_2 \\
&:= V_0 + V_1\e_1 + V_2\e_2 + V_3\e_1\e_2
\end{align*} and $V_0,\ldots,V_3$ are functions with values in $\C_{\e_4}$. Using Theorem~\ref{the:4_4a} we can calculate the inverse OFTs of those functions, denote them by $v_0,\ldots,v_3$, respectively. We also use the fact that \begin{align*}
&\big(((z\cdot\e_1)\cdot e^{\e_4\alpha_3})\cdot e^{\e_2\alpha_2}\big)\cdot e^{\e_1\alpha_1} =
\big(((z\cdot e^{-\e_4\alpha_3})\cdot e^{-\e_2\alpha_2})\cdot e^{\e_1\alpha_1}\big)\cdot\e_1, \displaybreak[3]\\
&\big(((z\cdot\e_2)\cdot e^{\e_4\alpha_3})\cdot e^{\e_2\alpha_2}\big)\cdot e^{\e_1\alpha_1} =
\big(((z\cdot e^{-\e_4\alpha_3})\cdot e^{\e_2\alpha_2})\cdot e^{-\e_1\alpha_1}\big)\cdot\e_2, \displaybreak[3]\\
&\big(((z\cdot\e_1\cdot\e_2)\cdot e^{\e_4\alpha_3})\cdot e^{\e_2\alpha_2}\big)\cdot e^{\e_1\alpha_1} =
\big(((z\cdot e^{\e_4\alpha_3})\cdot e^{-\e_2\alpha_2})\cdot e^{-\e_1\alpha_1}\big)\cdot\e_1\cdot\e_2,
\end{align*} for every $z\in\C_{\e_4}$. From those calculation we get the corollary below.

\begin{corollary}
Let $V_0,\ldots,V_3\colon\R^3\to\C_{\e_4}$, $v_i = \IFourierX{OFT}{V_i}$, $i=0,\ldots,3$, and let \linebreak $U_{\mathrm{OFT}}=V_0 + V_1\e_1 + V_2\e_2 + V_3\e_1\e_2$, $u = \IFourierX{OFT}{U_{\mathrm{OFT}}}$. Then \begin{align*}
u(x_1,x_2,x_3) &=\phantom{+} v_0(\phantom{-}x_1,\phantom{-}x_2,\phantom{-}x_3)\phantom{\cdot\e_4}\; + v_1(\phantom{-}x_1,-x_2,-x_3)\cdot\e_1 \\
& \phantom{=}\, + v_2(-x_1,\phantom{-}x_2,-x_3)\cdot\e_2 + v_3(-x_1,-x_2,\phantom{-}x_3)\cdot\e_1\cdot\e_2.
\end{align*}
\end{corollary}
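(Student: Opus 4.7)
The plan is to mirror the proof of Corollary~\ref{cor:3_4b}, but now working with the inverse transform. By linearity of integration applied to the defining integral of the inverse OFT (Theorem~\ref{th:3_1}), I first split
\begin{align*}
u(\x) = \IFourierX{OFT}{V_0}(\x) + \IFourierX{OFT}{V_1\cdot\e_1}(\x) + \IFourierX{OFT}{V_2\cdot\e_2}(\x) + \IFourierX{OFT}{V_3\cdot\e_1\cdot\e_2}(\x).
\end{align*}
The first term is exactly $v_0(x_1,x_2,x_3)$ by definition. For each of the remaining three pieces, I would apply the corresponding pre-stated identity pointwise under the integral, taking $z = V_k(\f)\in\C_{\e_4}$. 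These identities shuffle the factor $\e_1$, $\e_2$, or $\e_1\cdot\e_2$ (respectively) from the left of the kernel $e^{\e_4\alpha_3}\cdot e^{\e_2\alpha_2}\cdot e^{\e_1\alpha_1}$ out to the right, at the cost of flipping the signs of certain $\alpha_j$ in the kernel.

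Because the transferred factor is a fixed octonion independent of $\f$, right-distributivity of octonion multiplication over the integral allows pulling it outside. What remains inside is then an integral of the form
\begin{align*}
\int_{\R^3} V_k(\f)\cdot e^{\e_4\widetilde{\alpha}_3}\cdot e^{\e_2\widetilde{\alpha}_2}\cdot e^{\e_1\widetilde{\alpha}_1}\,\de\f,
\end{align*}
where $\widetilde{\alpha}_j = \pm\alpha_j$ is prescribed by the identity used. Since $\alpha_j = 2\pi f_j x_j$, the sign flip $\alpha_j\mapsto -\alpha_j$ corresponds exactly to $x_j\mapsto -x_j$ in the argument, so the integral equals $v_k$ evaluated at the appropriately reflected point. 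Reading off the three identities in order yields the sign patterns $(x_1,-x_2,-x_3)$, $(-x_1,x_2,-x_3)$ and $(-x_1,-x_2,x_3)$, matching the claim.

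The only delicate point is the verification of the three identities themselves, which is a mechanical but error-prone computation relying on the multiplication table (Tab.~\ref{tab:2_1}) together with the alternative and flexible laws~\eqref{eq:2_3}--\eqref{eq:2_3a}. The paper quotes them as obtained by straightforward calculation, and I would take them as a black box. All remaining steps are symbolic bookkeeping strictly parallel to the forward case of Corollary~\ref{cor:3_4b}, so I do not anticipate any further obstacle.
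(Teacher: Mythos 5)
Your proposal is correct and follows exactly the paper's own route: decompose $U_{\mathrm{OFT}}$ into its four $\C_{\e_4}$-components, apply the inverse-OFT integral term by term, and use the three pre-stated kernel identities to move $\e_1$, $\e_2$, $\e_1\cdot\e_2$ to the right of the kernel while flipping the corresponding signs of $\alpha_j$ (hence of $x_j$). The sign patterns $(x_1,-x_2,-x_3)$, $(-x_1,x_2,-x_3)$, $(-x_1,-x_2,x_3)$ you read off are exactly those the paper obtains, so nothing further is needed.
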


From Theorem~\ref{the:3_4a} and Corollary~\ref{cor:3_4b} one can draw several direct conclusions related to the behavior of the octonion transformation in infinity and the composition of the transformations.

\begin{theorem}
Let $u\colon\R^3\to\O$ be integrable (in Lebesgue sense) and $U_{\mathrm{OFT}} = \FourierX{OFT}{u}$. Then \begin{align*}
\lim\limits_{\abs{\f}\to\infty} U_{\mathrm{OFT}}(\f) = 0.
\end{align*}
\end{theorem}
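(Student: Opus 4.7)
The plan is to reduce the claim to the classical Riemann--Lebesgue lemma via the decomposition results already established in Theorem~\ref{the:3_4a} and Corollary~\ref{cor:3_4b}. First I would write $u = v_0 + v_1\e_2 + v_2\e_4 + v_3\e_2\e_4$ with $v_0,\ldots,v_3\colon\R^3\to\C$, using the grouping from~\eqref{eq:7_7}. Since $\O$ is a normed composition algebra, the real components of $u$ satisfy $\abs{u_i(\x)}\le\abs{u(\x)}$, so integrability of $u$ passes to each of the eight real components and thus to each of the four complex-valued functions $v_j$. Consequently, each classical (complex) Fourier transform $\FourierX{CFT}{v_j}$ is well defined.

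Next, I would apply the classical Riemann--Lebesgue lemma to conclude that $\FourierX{CFT}{v_j}(\f)\to 0$ as $\abs{\f}\to\infty$, for $j=0,1,2,3$. The key observation is that arbitrary sign flips of the coordinates of $\f$ do not affect this limit, since $\abs{(\pm f_1,\pm f_2,\pm f_3)}=\abs{\f}$; hence all eight expressions $\FourierX{CFT}{v_j}(\pm f_1,\pm f_2,\pm f_3)$ also tend to zero.

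Then I would invoke Theorem~\ref{the:3_4a} to express each $V_j:=\FourierX{OFT}{v_j}$ as a finite right-linear combination of values of $\FourierX{CFT}{v_j}$ at sign-flipped arguments, with multipliers of the form $\frac{1}{4}(1\pm\e_3)(1\pm\e_5)$, all of which have bounded octonion norm. Using the norm identity $\abs{o_1\cdot o_2}=\abs{o_1}\cdot\abs{o_2}$ valid in $\O$, each summand tends to zero in norm, and hence $V_j(\f)\to 0$ as $\abs{\f}\to\infty$. Finally, Corollary~\ref{cor:3_4b} expresses $U_{\mathrm{OFT}}(\f)$ as a finite sum of the $V_j$ evaluated at sign-flipped arguments and multiplied (from the right) by fixed basis octonions; applying the norm identity once more gives $U_{\mathrm{OFT}}(\f)\to 0$ as $\abs{\f}\to\infty$.

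There is essentially no serious obstacle: the argument is a bookkeeping reduction, and the only point that needs care is that non-associativity of $\O$ does not interfere, because the decomposition formulas \eqref{eq:3_12} and \eqref{eq:cor3_4b} have already been proved with a fixed left-to-right association, and the norm multiplicativity $\abs{o_1\cdot o_2}=\abs{o_1}\cdot\abs{o_2}$ holds in $\O$ regardless of how products are bracketed.
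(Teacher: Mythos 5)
Your proposal is correct and follows exactly the route the paper takes: the paper's proof is a one-line appeal to the classical Riemann--Lebesgue lemma together with Theorem~\ref{the:3_4a} and Corollary~\ref{cor:3_4b}, and you have simply (and accurately) filled in the bookkeeping details, including the harmless sign flips of $\f$ and the multiplicativity of the octonion norm.
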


\begin{proof}
It is a~direct corollary from the classical Riemann-Lebesgue theorem~\cite{Duoandikoetxea2001}, Theorem~\ref{the:3_4a} and Corollary~\ref{cor:3_4b}.
\end{proof}

Before we formulate and prove the next result, let us recall the classical theorem known from Fourier analysis~\cite{Duoandikoetxea2001}.

\begin{theorem}
Let $u\colon\R^n\to\C$ be smooth and rapidly decreasing (i.e. element of Schwartz class). Then \begin{align*}
\FourierX{CFT}{\FourierX{CFT}{u}}(\x) = u(-\x),
\end{align*} and so the classical Fourier transform has period $4$ (i.e. if we apply it four times, we get the identity operator).
\end{theorem}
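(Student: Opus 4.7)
The plan is to reduce the identity directly to the Fourier Integral Theorem (Theorem~\ref{th:3_2}) stated just above, which is the statement of Fourier inversion in $n$ dimensions (the real-valued statement extends to $\C$-valued Schwartz functions by $\R$-linearity applied separately to real and imaginary parts).

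First I would unfold $\FourierX{CFT}{\FourierX{CFT}{u}}(\x)$ as an iterated integral: writing the inner transform as $\hat u(\f)=\int_{\R^n}u(\y)e^{-2\pi\i\f\cdot\y}\,\de\y$ and substituting into the outer one yields
\begin{align*}
\FourierX{CFT}{\FourierX{CFT}{u}}(\x) = \int_{\R^n}\int_{\R^n} u(\y)\, e^{-2\pi\i\,\f\cdot(\x+\y)}\,\de\y\,\de\f,
\end{align*}
interpreted as exactly the kind of iterated integral that appears in Theorem~\ref{th:3_2}. Next I would apply Theorem~\ref{th:3_2} at the point $-\x$: its right-hand side is the iterated integral of $u(\y)\,e^{2\pi\i\,\f\cdot(-\x-\y)}$, which coincides term-by-term with the expression above. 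Hence $\FourierX{CFT}{\FourierX{CFT}{u}}(\x) = u(-\x)$. The Schwartz assumption is used only to guarantee that both $u$ and $\hat u$ lie in $L^1(\R^n)$ (recall that $\mathcal{S}(\R^n)$ is invariant under the CFT), so that the hypotheses of Theorem~\ref{th:3_2} are met.

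For the period-$4$ statement I would iterate: since $u(-\cdot)$ is again Schwartz, applying $\FourierX{CFT}{\cdot}^2$ to it yields $u(-(-\x))=u(\x)$, so $\FourierX{CFT}{\cdot}^4$ acts as the identity on $\mathcal{S}(\R^n)$. I do not anticipate any real obstacle in this proof: Theorem~\ref{th:3_2} is already phrased as an iterated integral of precisely the required form, so no additional Fubini-type argument is needed beyond what is built into its hypotheses. The one thing to handle carefully is the sign bookkeeping in the exponentials when comparing the double-CFT expression to Theorem~\ref{th:3_2} evaluated at $-\x$, which is exactly what produces the reflection $u(\x)\mapsto u(-\x)$ and fixes the period at $4$ rather than $2$.
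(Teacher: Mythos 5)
Your proposal is correct. Note that the paper itself does not prove this statement at all -- it is quoted as a classical fact from the Fourier-analysis literature (the reference given alongside Theorem~\ref{th:3_2}) and used only as a point of comparison for the subsequent OFT version. Your reduction is the standard argument and it goes through exactly as you describe: unfolding the double transform gives $\int_{\R^n}\int_{\R^n}u(\y)\,e^{-2\pi\i\,\f\cdot(\x+\y)}\,\de\y\,\de\f$, which is literally the right-hand side of Theorem~\ref{th:3_2} evaluated at $-\x$ (no change of variables in $\f$ is even needed), and the extension from $\R$-valued to $\C$-valued Schwartz functions by splitting into real and imaginary parts, together with the invariance of the Schwartz class under the CFT to secure integrability of $\hat u$, closes the hypotheses. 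The iteration $\mathcal{F}^4=\mathrm{id}$ from $\mathcal{F}^2 u=u(-\cdot)$ is likewise correct.
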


In the case of OFT, the analogous result is very similar, but slightly more complicated.

\begin{theorem}
Let $u\colon\R^3\to\C$ be smooth and rapidly decreasing (i.e. element of Schwartz class). Then \begin{align*}
\FourierX{OFT}{\FourierX{OFT}{u}}(x_1,x_2,x_3) = \frac{1}{2}\big( &u(x_1,x_2,x_3) + u(-x_1,x_2,-x_3) \\
& \qquad + u(-x_1,-x_2,x_3) - u(x_1,-x_2,-x_3)\big),
\end{align*} and so the OFT has period $4$.
\end{theorem}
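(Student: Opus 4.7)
The plan is to reduce $\FourierX{OFT}{\FourierX{OFT}{u}}$ to iterated applications of the classical Fourier transform, whose inversion property $\FourierX{CFT}{\FourierX{CFT}{u}}(\x)=u(-\x)$ for Schwartz $u$ is standard, by using the CFT--OFT conversions provided by Theorem~\ref{the:3_4a} and Corollary~\ref{cor:3_4b}. Since $u$ takes values in $\C=\C_{\e_1}$, Theorem~\ref{the:3_4a} expresses $U_{\mathrm{OFT}}=\FourierX{OFT}{u}$ as an explicit four-term combination of $U(\pm f_1,\pm f_2,\pm f_3)$ (where $U=\FourierX{CFT}{u}$), multiplied left-to-right by coefficient factors of the form $(1\mp\e_3)(1\mp\e_5)$. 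I would first expand these factors using Table~\ref{tab:2_1} to rewrite $U_{\mathrm{OFT}}$ in the canonical basis form required by Corollary~\ref{cor:3_4b}, namely $V_0+V_1\e_2+V_2\e_4+V_3\e_2\e_4$, with each $V_j\colon\R^3\to\C_{\e_1}$ a linear combination of the eight sign-flipped values $U(\pm f_1,\pm f_2,\pm f_3)$.

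Next, Corollary~\ref{cor:3_4b} yields
\[
\FourierX{OFT}{U_{\mathrm{OFT}}}(\x)=W_0(x_1,x_2,x_3)+W_1(-x_1,x_2,-x_3)\e_2+W_2(-x_1,-x_2,x_3)\e_4+W_3(x_1,-x_2,-x_3)\e_2\e_4,
\]
where $W_j=\FourierX{OFT}{V_j}$. Since each $V_j$ is again $\C_{\e_1}$-valued, a second application of Theorem~\ref{the:3_4a} expresses $W_j$ through $\FourierX{CFT}{V_j}$. By $\R$-linearity of the CFT and the elementary substitution rule, each $\FourierX{CFT}{V_j}$ reduces to a sum of values of $\FourierX{CFT}{U}$ at reflected arguments, and classical Fourier inversion then replaces every such value by $u$ evaluated at some $(\pm x_1,\pm x_2,\pm x_3)$.

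Finally, I would collect all the contributions and simplify. By construction only the eight reflection points of $(x_1,x_2,x_3)$ can appear, and their coefficients are completely determined by the products of $(1\mp\e_3)(1\mp\e_5)$ factors (from the two applications of Theorem~\ref{the:3_4a}) combined with the $\e_2,\e_4,\e_2\e_4$ factors from Corollary~\ref{cor:3_4b}. A case analysis then shows that the four ``odd-parity'' reflection points cancel, while the four ``even-parity'' points combine with signs $+,+,+,-$, producing the stated formula. The period-four claim is then immediate: applying the real-linear map $u\mapsto\tfrac{1}{2}\bigl(u(x_1,x_2,x_3)+u(-x_1,x_2,-x_3)+u(-x_1,-x_2,x_3)-u(x_1,-x_2,-x_3)\bigr)$ a second time is a short elementary computation that recovers $u$.

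The main obstacle is bookkeeping under non-associativity: chains such as $((U\cdot(1-\e_3))\cdot(1-\e_5))\cdot\e_2$ must be evaluated strictly left-to-right and cannot be freely rearranged. One may handle this either by repeated appeal to Artin's theorem, since each individual chain touches only a few imaginary generators and often lies inside a quaternion subalgebra where associativity holds, or, more systematically, by transporting the entire computation into the quadruple-complex algebra $\EE$ of Section~\ref{sec:2}, where multiplication is commutative and associative and the tuple representation~\eqref{eq:7_7} makes the sign-tracking essentially mechanical.
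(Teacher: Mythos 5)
Your proposal is correct and follows essentially the same route as the paper: apply Theorem~\ref{the:3_4a} to put $\FourierX{OFT}{u}$ into the canonical form required by Corollary~\ref{cor:3_4b}, apply that corollary, apply Theorem~\ref{the:3_4a} once more to the resulting $\C$-valued pieces, invoke classical Fourier inversion, and collect the reflected terms. The only cosmetic difference is that the paper organizes the bookkeeping through the parity components $U_{ee}, U_{oe}, U_{eo}, U_{oo}$ of $U$ with respect to $f_2$ and $f_3$ rather than carrying the $(1\mp\e_3)(1\mp\e_5)$ factors explicitly, which amounts to the same expansion.
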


\begin{proof}
We begin with the case of $u\colon\R^3\to\C$. Let $U=\FourierX{CFT}{u}$. By carrying out direct calculations, we can write the claim of Theorem~\ref{the:3_4a} in the form \begin{align*}
\FourierX{OFT}{u}(\f) = U_{ee}(\f) - U_{oe}(\f)\cdot\e_1\cdot\e_2 - U_{eo}(\f)\cdot\e_1\cdot\e_4 - U_{oo}(\f)\cdot\e_2\cdot\e_4,
\end{align*} where $U_{yz}$, $y,z\in\{e,o\}$, are four components of $U$ of different parity with respect to $f_2$ and $f_3$, i.e. \begin{align*}
U_{yz}(f_1,f_2,f_3) 
&=\;\; ( \;\;\;\,\;\;\;\,U(\phantom{-}f_1,\phantom{-}f_2,\phantom{-}f_3) + \;\;\;\,\;\;\;\varepsilon_y U(\phantom{-}f_1,-f_2,\phantom{-}f_3) \nonumber \\
&\quad+ \;\;\;\varepsilon_z U(\phantom{-}f_1,\phantom{-}f_2,-f_3) + \;\;\;\varepsilon_y\varepsilon_z U(\phantom{-}f_1,-f_2,-f_3) ) / 4,
\end{align*} where $\varepsilon_y = 1$ if $y=e$ and $\varepsilon_y = -1$ if $y=o$, etc.
\medskip

Note that this is the form as in the assumptions of Corollary~\ref{cor:3_4b}, so when calculating the OFT of function $U_{\mathrm{OFT}}$ we get \begin{align*}
\FourierX{OFT}{\FourierX{OFT}{u}}(x_1,x_2,x_3) &=
\phantom{+} \FourierX{OFT}{\phantom{-}U_{ee}\phantom{\cdot\e_1}\;}(\phantom{-}x_1,\phantom{-}x_2,\phantom{-}x_3) \\
& \phantom{=}\,+ \FourierX{OFT}{- U_{oe}\cdot\e_1}(-x_1,\phantom{-}x_2,-x_3)\cdot\e_2 \\
& \phantom{=}\, + \FourierX{OFT}{- U_{eo}\cdot\e_1}(-x_1,-x_2,\phantom{-}x_3)\cdot\e_4 \\
& \phantom{=}\,+ \FourierX{OFT}{- U_{oo}\phantom{\cdot\e_1}\;}(\phantom{-}x_1,-x_2,-x_3)\cdot\e_2\cdot\e_4.
\end{align*} All functions which OFTs we want to calculate are $\C$-valued functions, so we can again use Theorem~\ref{the:3_4a}. After tedious calculations we get that \begin{align*}
\FourierX{OFT}{\phantom{-}U_{ee}\phantom{\cdot\e_1}\;}(\phantom{-}x_1,\phantom{-}x_2,\phantom{-}x_3)\phantom{\cdot\e_2\cdot\e_4}\; &= \phantom{-}u_{ee}(-x_1,-x_2,-x_3), \\
\FourierX{OFT}{- U_{oe}\cdot\e_1}(-x_1,\phantom{-}x_2,-x_3)\cdot\e_2\phantom{\cdot\e_4}\; &= -u_{oe}(\phantom{-}x_1,-x_2,-x_3), \\
\FourierX{OFT}{- U_{eo}\cdot\e_1}(-x_1,-x_2,\phantom{-}x_3)\cdot\e_4\phantom{\cdot\e_2}\; &= -u_{eo}(\phantom{-}x_1,-x_2,-x_3), \\
\FourierX{OFT}{- U_{oo}\phantom{\cdot\e_1}\;}(\phantom{-}x_1,-x_2,-x_3)\cdot\e_2\cdot\e_4 &= - u_{oo}(-x_1,-x_2,-x_3),
\end{align*} where $u_{yz}$, $y,z\in\{e,o\}$, are four components of $u$ of different parity with respect to $x_2$ and $x_3$, just like earlier. By expanding the above functions and rearranging the components we will receive a claim in the case of functions with complex values. It is easy to calculate that by applying this "double OFT" transformation twice, we get identity.

In the general case of $u\colon\R^3\to\O$ we proceed like in the proof of Corollary~\ref{cor:3_4b}. Let $u = v_0+v_1\e_2+v_2\e_4+v_3\e2\e_4$, where $v_0,\ldots,v_3\colon\R^3\to\C$. Then, from~\eqref{eq:cor3_4b} and~\eqref{eq:oct1a}--\eqref{eq:oct1c} we get \begin{align*}
\FourierX{OFT}{\FourierX{OFT}{u}} &= 
\FourierX{OFT}{\FourierX{OFT}{v_0}} + 
\FourierX{OFT}{\FourierX{OFT}{v_1}}\e_2  \\ &\qquad +
\FourierX{OFT}{\FourierX{OFT}{v_2}}\e_4 +
\FourierX{OFT}{\FourierX{OFT}{v_3}}\e_2\e_4. 
\end{align*} We immediately receive a claim of the theorem from the previous part of the proof.
\end{proof}

In~\cite{HahnSnopek2011a} it was proved that the octonion Fourier transform of the real-valued function can be represented as the octonion sum of components of different parity, i.e. \begin{equation} \label{eq:3_20}
U_{\mathrm{OFT}} = U_{eee} - U_{oee}\e_1 - U_{eoe}\e_2 + U_{ooe}\e_3 - U_{eeo}\e_4 + U_{oeo}\e_5 + U_{eoo}\e_6 - U_{ooo}\e_7,
\end{equation} where $U_{xyz}$, $x,y,z\in\{e,o\}$ are defined as \begin{align}
U_{eee}(\f) &= \int_{\R^3} u_{eee}(\x)\cos(2\pi f_1 x_1)\cos(2\pi f_2 x_2)\cos(2\pi f_3 x_3)\,\de \x, \label{eq:3_20a} \displaybreak[3]\\
U_{oee}(\f) &= \int_{\R^3} u_{oee}(\x)\sin(2\pi f_1 x_1)\cos(2\pi f_2 x_2)\cos(2\pi f_3 x_3)\,\de \x, \displaybreak[3]\\
U_{eoe}(\f) &= \int_{\R^3} u_{eoe}(\x)\cos(2\pi f_1 x_1)\sin(2\pi f_2 x_2)\cos(2\pi f_3 x_3)\,\de \x, \displaybreak[3]\\
U_{ooe}(\f) &= \int_{\R^3} u_{ooe}(\x)\sin(2\pi f_1 x_1)\sin(2\pi f_2 x_2)\cos(2\pi f_3 x_3)\,\de \x, \displaybreak[3]\\
U_{eeo}(\f) &= \int_{\R^3} u_{eeo}(\x)\cos(2\pi f_1 x_1)\cos(2\pi f_2 x_2)\sin(2\pi f_3 x_3)\,\de \x, \displaybreak[3]\\
U_{oeo}(\f) &= \int_{\R^3} u_{oeo}(\x)\sin(2\pi f_1 x_1)\cos(2\pi f_2 x_2)\sin(2\pi f_3 x_3)\,\de \x, \displaybreak[3]\\
U_{eoo}(\f) &= \int_{\R^3} u_{eoo}(\x)\cos(2\pi f_1 x_1)\sin(2\pi f_2 x_2)\sin(2\pi f_3 x_3)\,\de \x, \displaybreak[3]\\
U_{ooo}(\f) &= \int_{\R^3} u_{ooo}(\x)\sin(2\pi f_1 x_1)\sin(2\pi f_2 x_2)\sin(2\pi f_3 x_3)\,\de \x, \label{eq:3_20h} 
\end{align} where $\f=(f_1,f_2,f_3)$, $\x=(x_1,x_2,x_3)$, and functions $u_{xyz}(\x)$, $x,y,z\in\{e,o\}$, are eight components of $u$ of different parity with respect to $x_1$, $x_2$ and~$x_3$, i.e. \begin{align*}
u_{xyz}(x_1,x_2,x_3) 
&=\;\; ( \;\;\;\,\;\;\;\,u(\phantom{-}x_1,\phantom{-}x_2,\phantom{-}x_3) + \;\;\;\,\;\;\;\varepsilon_x u(-x_1,\phantom{-}x_2,\phantom{-}x_3) \nonumber \\
&\quad+ \;\;\;\varepsilon_y u(\phantom{-}x_1,-x_2,\phantom{-}x_3) + \;\;\;\varepsilon_x\varepsilon_y u(-x_1,-x_2,\phantom{-}x_3) \nonumber \\
&\quad+ \;\;\;\varepsilon_z u(\phantom{-}x_1,\phantom{-}x_2,-x_3) + \;\;\;\varepsilon_x\varepsilon_z u(-x_1,\phantom{-}x_2,-x_3) \nonumber \\
&\quad+ \varepsilon_y\varepsilon_z u(\phantom{-}x_1,-x_2,-x_3) + \varepsilon_x\varepsilon_y\varepsilon_z u(-x_1,-x_2,-x_3) ) / 16, 
\end{align*} where $\varepsilon_x = 1$ if $x=e$ and $\varepsilon_x = -1$ if $x=o$, etc.
\medskip

In this notation we use indices $e$ and $o$ to denote that the function is even ($e$) or odd ($o$) with respect to the proper variable, e.g. function $u_{eeo}(\x)$ is even with respect to $x_1$ and~$x_2$ and odd with respect to $x_3$. Analogous considerations can be performed for $U_{xyz}$, e.g. $U_{eoo}$ is even with respect to $f_1$ and odd with respect to $f_2$ and~$f_3$.
\medskip

It should be noted that in case of the real-valued functions $u$, all terms $U_{xyz}$ in \eqref{eq:3_20} are real-valued functions. Similar formulas can be obtained for the octonion-valued functions but we omit the details here.

\section{Properties of the octonion Fourier transform}
\label{sec:3}

\renewcommand{\arraystretch}{1.8}
\begin{table}[!ht]
{\footnotesize\begin{center}
\begin{tabular}{|c|l|l|}
\hline
& function & octonion Fourier transform \\
\hline
1. & $u(\frac{x_1}{a},\frac{x_2}{b},\frac{x_3}{c})$ &
$\abs{abc}U(af_1,bf_2,cf_3)$ \\
\hline
2. & $u(\x)\cdot\cos(2\pi f_0 x_1)$ &
$\big(U(f_1+f_0,f_2,f_3)+U(f_1-f_0,f_2,f_3)\big)\cdot\frac{1}{2}$ \\
   & $u(\x)\cdot\cos(2\pi f_0 x_2)$ &
$\big(U(f_1,f_2+f_0,f_3)+U(f_1,f_2-f_0,f_3)\big)\cdot\frac{1}{2}$ \\
   & $u(\x)\cdot\cos(2\pi f_0 x_3)$ & 
$\big(U(f_1,f_2,f_3+f_0)+U(f_1,f_2,f_3-f_0)\big)\cdot\frac{1}{2}$ \\
\hline
3. & $u(\x)\cdot\sin(2\pi f_0 x_1)$ & 
$\big(U(f_1+f_0,-f_2,-f_3)-U(f_1-f_0,-f_2,-f_3)\big)\cdot\frac{\e_1}{2}$ \\
   & $u(\x)\cdot\sin(2\pi f_0 x_2)$ & 
$\big(U(f_1,f_2+f_0,-f_3)-U(f_1,f_2-f_0,-f_3)\big)\cdot\frac{\e_2}{2}$ \\
   & $u(\x)\cdot\sin(2\pi f_0 x_3)$ & 
$\big(U(f_1,f_2,f_3+f_0)-U(f_1,f_2,f_3-f_0)\big)\cdot\frac{\e_4}{2}$ \\
\hline
4. & $u(\x)\cdot\exp(-\e_1 2\pi f_0 x_1)$ & 
$U(f_1+f_0,f_2,f_3)$ \\
   & $u(\x)\cdot\exp(-\e_2 2\pi f_0 x_2)$ & 
$\big(U(f_1,f_2+f_0,f_3)+U(f_1,f_2-f_0,f_3)$ \\
   & & 
$\qquad +U(-f_1,f_2+f_0,f_3)-U(-f_1,f_2-f_0,f_3)\big)\cdot\frac{1}{2}$ \\
   & $u(\x)\cdot\exp(-\e_4 2\pi f_0 x_3)$ & 
$\big(U(f_1,f_2,f_3+f_0)+U(f_1,f_2,f_3-f_0)$ \\
   & & 
$\qquad +U(-f_1,-f_2,f_3+f_0)-U(-f_1,-f_2,f_3-f_0)\big)\cdot\frac{1}{2}$ \\
\hline
5. & $u(x_1-\alpha,x_2,x_3)$ & 
$\cos(2\pi f_1\alpha) U(f_1,f_2,f_3) - \sin(2\pi f_1\alpha)U(f_1,-f_2,-f_3)\cdot \e_1$ \\
   & $u(x_1,x_2-\beta,x_3)$ & 
$\cos(2\pi f_2\beta) U(f_1,f_2,f_3) - \sin(2\pi f_2\beta)U(f_1,f_2,-f_3)\cdot \e_2$ \\
   & $u(x_1,x_2,x_3-\gamma)$ & 
$\cos(2\pi f_3\gamma) U(f_1,f_2,f_3) - \sin(2\pi f_3\gamma)U(f_1,f_2,f_3)\cdot \e_4$ \\
\hline
6. & $u_{x_1}(\x)$ &
$U(f_1,-f_2,-f_3)\cdot(2\pi f_1\e_1)$ \\
   & $u_{x_2}(\x)$ &
$U(f_1,f_2,-f_3)\cdot(2\pi f_2\e_2)$ \\
   & $u_{x_3}(\x)$ &
$U(f_1,f_2,f_3)\cdot(2\pi f_3\e_4)$ \\
\hline
7. & $2\pi x_1 \cdot u(\x)$ & 
$U_{f_1}(f_1,-f_2,-f_3)\cdot\e_1$ \\
   & $2\pi x_2 \cdot u(\x)$ & 
$U_{f_2}(f_1,f_2,-f_3)\cdot\e_2$ \\
   & $2\pi x_3 \cdot u(\x)$ & 
$U_{f_3}(f_1,f_2,f_3)\cdot\e_4$ \\
\hline
8. & $(u*v)(\x)$ &
$\;\phantom{+}V(\phantom{-}f_1,\phantom{-}f_2,\phantom{-}f_3) \cdot (\phantom{-}U_{eee}(\f)\phantom{\,\e_1} - U_{eeo}(\f)\,\e_4)$ \\
   & & 
$\begin{array}{l} 
+ V(\phantom{-}f_1,-f_2,-f_3) \cdot (-U_{oee}(\f)\,\e_1 + U_{ooe}(\f)\,\e_3) \nonumber\\
+ V(\phantom{-}f_1,\phantom{-}f_2,-f_3) \cdot (-U_{eoe}(\f)\,\e_2 + U_{oeo}(\f)\,\e_5) \\
+ V(-f_1,\phantom{-}f_2,-f_3) \cdot (\phantom{-}U_{eoo}(\f)\,\e_6 - U_{ooo}(\f)\,\e_7) \end{array} $ \\   
\hline
\end{tabular}\end{center}}
\vspace{2mm}
\caption{Summary of octonion Fourier transform properties.}
\label{tab:3_1}
\end{table}

Properties of the complex Fourier transform and its quaternion counterpart are well known in literature~\cite{Allen2004,bulow,Duoandikoetxea2001,Ell2014}. In~\cite{BlaszczykSnopek2017} we already proved some of their octonion analogues (i.e. shift theorem stated in~Theorem~\ref{the:6_8a} and Hermitian symmetry analogue) and in this section we will derive equivalents of other classical properties such as argument scaling and modulation theorem. Sketches of some proofs can be found in~\cite{BlaszczykEUSIPCO2018}, here we will present all previously omitted details, but also present previously unpublished results. The summary of the content of this section is shown in Table~\ref{tab:3_1}.
\medskip

If we do not state otherwise, then in each of the following statements we assume that $u\colon\R^3\to\O$ and~$U=\FourierX{OFT}{u}$.

\begin{theorem} \label{the:3_6}
Let $a,b,c\in\R\setminus\{0\}$ and $v\colon\R^3\to\O$ be defined by $v(x_1,x_2,x_3)=u(\frac{x_1}{a},\frac{x_2}{b},\frac{x_3}{c})$, $V=\FourierX{OFT}{v}$. Then \begin{equation*}
V(f_1,f_2,f_3) = \abs{abc}U(af_1,bf_2,cf_3).
\end{equation*}
\end{theorem}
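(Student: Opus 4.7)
The plan is to prove the scaling identity by a straightforward change of variables inside the defining integral~\eqref{eq:oct_def}, exactly as in the classical proof, and then verify that the non-associativity and non-commutativity of $\O$ do not interfere. I would start by writing
\begin{align*}
V(f_1,f_2,f_3) = \int_{\R^3} u\!\left(\tfrac{x_1}{a},\tfrac{x_2}{b},\tfrac{x_3}{c}\right) e^{-\e_1 2\pi f_1 x_1}\, e^{-\e_2 2\pi f_2 x_2}\, e^{-\e_4 2\pi f_3 x_3}\,\de\x,
\end{align*}
with the multiplications understood from left to right. Then I would substitute $y_1 = x_1/a$, $y_2 = x_2/b$, $y_3 = x_3/c$, applying Fubini's theorem to treat each one-dimensional substitution separately. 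Each contributes a factor of $|a|$, $|b|$, $|c|$ respectively, the absolute values accounting for the sign flip of the integration limits whenever the corresponding parameter is negative.

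Under this substitution the scalar exponents transform cleanly: $2\pi f_1 x_1 \mapsto 2\pi (af_1) y_1$, and analogously for the other two coordinates. Hence the integrand becomes
\begin{align*}
u(\y)\, e^{-\e_1 2\pi (af_1) y_1}\, e^{-\e_2 2\pi (bf_2) y_2}\, e^{-\e_4 2\pi (cf_3) y_3}.
\end{align*}
Crucially, no reordering of octonion factors is required -- each factor occupies the same slot as before in the left-to-right product. The only algebraic fact I need is that the scalar $|abc|$ coming from the Jacobian can be pulled outside the integral past the entire octonion-valued integrand; this is immediate because $\R$ lies in the center of $\O$, so every real number associates and commutes with every octonion. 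Recognizing what remains as $U(af_1,bf_2,cf_3)$ by~\eqref{eq:oct_def} closes the proof.

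There is essentially no hard step here, which is worth contrasting with the other results in Section~\ref{sec:3}: in the modulation, shift, differentiation and convolution theorems, one is forced to move octonion factors past one another, and the non-commutativity and non-associativity produce the characteristic sign flips and parity splittings visible in Table~\ref{tab:3_1}. The scaling theorem is immune to these complications precisely because the change of variables acts only on the real arguments of the exponentials, so the order of octonion multiplication is preserved throughout. That is why Theorem~\ref{the:3_6} takes the same form as its classical complex analogue.
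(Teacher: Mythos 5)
Your proof is correct and follows essentially the same route as the paper's: a direct change of variables $(y_1,y_2,y_3)=(\tfrac{x_1}{a},\tfrac{x_2}{b},\tfrac{x_3}{c})$ with Jacobian $\abs{abc}$, after which the integral is recognized as $U(af_1,bf_2,cf_3)$. Your added observations --- that no reordering of octonion factors occurs and that the real Jacobian factor commutes and associates with everything because $\R$ is central in $\O$ --- are accurate and make explicit what the paper leaves implicit.
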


\begin{proof}
Proof is very similar to the classical case and utilizes integration by substitution. From the definition of the OFT we have \begin{align*}
V(\f) &= \int_{\R^3} u\left(\frac{x_1}{a},\frac{x_2}{b},\frac{x_3}{c}\right)e^{-\e_1 2\pi f_1x_1}e^{-\e_2 2\pi f_2x_2}e^{-\e_4 2\pi f_3x_3}\,\de\x = (\star).
\intertext{We introduce the substitution $(y_1,y_2,y_3)=\left(\frac{x_1}{a},\frac{x_2}{b},\frac{x_3}{c}\right)$. Let us note that determinant of the Jacobian matrix of this substitution is equal to $\abs{\frac{\partial(x_1,x_2,x_3)}{\partial(y_1,y_2,y_3)}}=\abs{abc}$. Then }
(\star)&= \int_{\R^3} u(y_1,y_2,y_3)e^{-\e_1 2\pi af_1y_1}e^{-\e_2 2\pi bf_2y_2}e^{-\e_4 2\pi cf_3y_3}\abs{abc}\,\de\y \\
&=\abs{abc}U(af_1,bf_2,cf_3),
\end{align*} which concludes the proof.
\end{proof}

Theorem~\ref{the:3_6} can be generalized to all linear maps of $\x$. In the case of quaternion Fourier transform one can find similar result in~\cite{bulow} for functions $u\colon\R^2\to\R$ and~$v(\x)=u(\mathbf{A}\x)$, where $\mathbf{A}$ real-valued $2\times 2$ matrix such that $\det(\mathbf{A})\neq0$. Then \begin{align*}
V(f_1,f_2)& = \frac{1}{2\det{\mathbf{A}}} \big(
U(\tilde{a}_{22}f_1+\tilde{a}_{21}f_2,\tilde{a}_{12}f_1+\tilde{a}_{11}f_2) + 
U(\tilde{a}_{22}f_1-\tilde{a}_{21}f_2,-\tilde{a}_{12}f_1+\tilde{a}_{11}f_2) \\
&\!-
\e_3 U(-\tilde{a}_{22}f_1+\tilde{a}_{21}f_2,-\tilde{a}_{12}f_1+\tilde{a}_{11}f_2) +
\e_3U(-\tilde{a}_{22}f_1-\tilde{a}_{21}f_2,\tilde{a}_{12}f_1+\tilde{a}_{11}f_2)\big),
\end{align*} where $\left(\begin{smallmatrix}\tilde{a}_{11} & \tilde{a}_{12} \\ \tilde{a}_{21} & \tilde{a}_{22}\end{smallmatrix}\right)=\frac{1}{\det{\mathbf{A}}}\mathbf{A}$. 
\medskip

In the octonion setup, considering $v(\x)=u(\mathbf{A}\x)$, where $\mathbf{A}$ is some arbitrary nonsingular $3\times 3$ matrix, we would get a result containing 64 different terms. Due to the complication of calculations and slight significance for further research we skip this formula.
\medskip

The next three theorems are known in signal and system theory as the modulation theorem. One can notice that the claim of cosine modulation theorem (with cosine function as a carrier) is exactly the same as in the case of complex Fourier transform. This can not be said about the sine modulation theorem.
\medskip

\begin{theorem} \label{the:6_7}
Let $f_0\in\R$ and denote $u^{\mathrm{cos},i}(\x)=u(\x)\cdot\cos(2\pi f_0x_i)$, $U^{\mathrm{cos},i}=\FourierX{OFT}{u^{\mathrm{cos},i}}$, $i=1,2,3$. Then \begin{align*}
U^{\mathrm{cos},1}(f_1,f_2,f_3) &= \big(U(f_1+f_0,f_2,f_3)+U(f_1-f_0,f_2,f_3)\big)\cdot\frac{1}{2}, \\
U^{\mathrm{cos},2}(f_1,f_2,f_3) &= \big(U(f_1,f_2+f_0,f_3)+U(f_1,f_2-f_0,f_3)\big)\cdot\frac{1}{2}, \\
U^{\mathrm{cos},3}(f_1,f_2,f_3) &= \big(U(f_1,f_2,f_3+f_0)+U(f_1,f_2,f_3-f_0)\big)\cdot\frac{1}{2}.
\end{align*}
\end{theorem}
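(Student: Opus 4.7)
My approach is to express each cosine factor via Euler's formula (2.4) using a pure unit imaginary $\boldsymbol{\mu}$ \emph{matching} the imaginary unit that appears in the OFT kernel at the corresponding position: $\boldsymbol{\mu}=\e_1$ for $i=1$, $\boldsymbol{\mu}=\e_2$ for $i=2$, and $\boldsymbol{\mu}=\e_4$ for $i=3$. Once the cosine is replaced by a half-sum of two exponentials, each such exponential can be fused with its sibling kernel factor to produce a single exponential with a shifted frequency, after which the definition~\eqref{eq:oct_def} yields the stated translates of $U$.

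For concreteness, consider the case $i=2$. Plug $u^{\mathrm{cos},2}(\x)=u(\x)\cos(2\pi f_0 x_2)$ into~\eqref{eq:oct_def}. Since $\cos(2\pi f_0 x_2)$ is a real number, it lies in the center of $\O$, so it commutes with every octonion and it associates freely inside any product. Using left-to-right evaluation, I would therefore rewrite the integrand as
\begin{equation*}
\bigl(u(\x)\cdot e^{-\e_1 2\pi f_1 x_1}\bigr)\cdot\bigl(\cos(2\pi f_0 x_2)\cdot e^{-\e_2 2\pi f_2 x_2}\bigr)\cdot e^{-\e_4 2\pi f_3 x_3},
\end{equation*}
and then apply~\eqref{eq:2_4} to the bracketed factor. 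Because both $e^{\pm\e_2 2\pi f_0 x_2}$ and $e^{-\e_2 2\pi f_2 x_2}$ lie in the commutative/associative subfield $\C_{\e_2}\subset\O$, their product collapses to
\begin{equation*}
\cos(2\pi f_0 x_2)\cdot e^{-\e_2 2\pi f_2 x_2} = \tfrac{1}{2}\bigl(e^{-\e_2 2\pi(f_2-f_0)x_2}+e^{-\e_2 2\pi(f_2+f_0)x_2}\bigr).
\end{equation*}
Splitting the integral by linearity and recognizing the two resulting integrals as OFT values at shifted frequencies produces the claimed formula. For $i=1$ the same maneuver places the scalar $\cos(2\pi f_0 x_1)$ next to $e^{-\e_1 2\pi f_1 x_1}$, and for $i=3$ it is pushed past the $\e_1$- and $\e_2$-exponentials (always legal because $\cos$ is real) until it is adjacent to $e^{-\e_4 2\pi f_3 x_3}$; in each case the same Euler identity inside $\C_{\e_1}$ or $\C_{\e_4}$ finishes the argument. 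Fubini's theorem and integrability of $u$ justify all the bookkeeping.

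\textbf{Main obstacle.} The only substantive point is the non-associativity issue, and it is in fact sidestepped rather than overcome: the entire argument hinges on the observation that $\cos(2\pi f_0 x_i)$ is a \emph{real} scalar, so all potentially problematic re-associations are legal before one invokes Euler's formula. Once the expansion is performed, the two resulting exponentials always belong to the same two-dimensional subfield $\C_{\e_{k_i}}$ as the neighbouring kernel factor, so associativity and commutativity hold inside that subfield and no appeal to alternativity~\eqref{eq:2_3} beyond what is already used throughout the paper is required. This is precisely why the cosine modulation theorem has the same shape as its classical counterpart, in contrast to the sine version which will pick up sign changes in the arguments of $U$.
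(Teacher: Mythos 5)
Your proposal is correct and follows essentially the same route as the paper: the paper's proof likewise replaces $\cos(2\pi f_0 x_i)$ by the Euler half-sum with the imaginary unit matching the corresponding kernel factor (equation~\eqref{eq:2_4}, instantiated as $\e_1$, $\e_2$, or $\e_4$), slides the real scalar next to that factor, fuses the exponentials inside the commutative subfield, and splits by $\R$-linearity. The only cosmetic difference is that the paper writes out the case $i=1$ while you detail $i=2$.
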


\begin{proof}
We will use the equivalent definition of the cosine function, i.e.~equation~\eqref{eq:2_4}. Then \begin{align} \label{eq:6_27cos}
\cos\alpha = \frac{1}{2}\big(e^{\e_1 \alpha}+e^{-\e_1 \alpha}\big)
= \frac{1}{2}\big(e^{\e_2 \alpha}+e^{-\e_2 \alpha}\big)
= \frac{1}{2}\big(e^{\e_4 \alpha}+e^{-\e_4 \alpha}\big).
\end{align} Then for $i=1$ we have \begin{align*}
U^{\mathrm{cos},1}(f_1,f_2,f_3) &= \int_{\R^3} \big(u(\x)\cdot\cos(2\pi f_0x_1)\big) e^{-\e_1 2\pi f_1x_1}e^{-\e_2 2\pi f_2x_2}e^{-\e_4 2\pi f_3x_3}\,\de\x \\
&= \int_{\R^3} u(\x)\big(e^{-\e_1 2\pi f_1x_1}\cos(2\pi f_0x_1)\big)e^{-\e_2 2\pi f_2x_2}e^{-\e_4 2\pi f_3x_3}\,\de\x \\
&= \frac{1}{2} \int_{\R^3} u(\x)\big(e^{-\e_1 2\pi f_1x_1}(e^{\e_1 2\pi f_0x_1}+e^{-\e_1 2\pi f_0x_1})\big)e^{-\e_2 2\pi f_2x_2}e^{-\e_4 2\pi f_3x_3}\,\de\x \\
&= \frac{1}{2} \int_{\R^3} u(\x)\big(e^{-\e_1 2\pi (f_1-f_0)x_1}+e^{-\e_1 2\pi (f_1+f_0)x_1}\big)e^{-\e_2 2\pi f_2x_2}e^{-\e_4 2\pi f_3x_3}\,\de\x \\
&= \frac{1}{2}\big(U(f_1-f_0,f_2,f_3)+U(f_1+f_0,f_2,f_3)\big),
\end{align*} which concludes the proof in this case. For $i=2,3$ proceed analogously.
\end{proof}
\medskip

\begin{theorem} \label{the:6_7sin}
Let $f_0\in\R$ and denote $u^{\mathrm{sin},i}(\x)=u(\x)\cdot\sin(2\pi f_0x_i)$, $U^{\mathrm{sin},i}=\FourierX{OFT}{u^{\mathrm{sin},i}}$, \linebreak $i=1,2,3$. Then \begin{align*}
U^{\mathrm{sin},1}(f_1,f_2,f_3) &= \big(U(f_1+f_0,-f_2,-f_3)-U(f_1-f_0,-f_2,-f_3)\big)\cdot\frac{\e_1}{2}, \\
U^{\mathrm{sin},2}(f_1,f_2,f_3) &= \big(U(f_1,f_2+f_0,-f_3)-U(f_1,f_2-f_0,-f_3)\big)\cdot\frac{\e_2}{2}, \\
U^{\mathrm{sin},3}(f_1,f_2,f_3) &= \big(U(f_1,f_2,f_3+f_0)-U(f_1,f_2,f_3-f_0)\big)\cdot\frac{\e_4}{2}.
\end{align*}
\end{theorem}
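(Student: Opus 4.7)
The plan is to follow the template of the cosine case (Theorem~\ref{the:6_7}), but starting from the sine analogue of~\eqref{eq:2_4}, namely $\sin\alpha=\frac{1}{2\boldsymbol{\mu}}\bigl(e^{\boldsymbol{\mu}\alpha}-e^{-\boldsymbol{\mu}\alpha}\bigr)$, used with $\boldsymbol{\mu}=\e_1$, $\e_2$ and $\e_4$ respectively in the three cases. Because $\boldsymbol{\mu}^{-1}=-\boldsymbol{\mu}$ for every pure unitary octonion, the extra factor $1/(2\boldsymbol{\mu})$ that is absent from~\eqref{eq:6_27cos} is precisely what will generate the right-multiplication by $\e_i/2$ appearing in the claim.

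For $i=1$, I would substitute this identity with $\boldsymbol{\mu}=\e_1$ and first compute $\sin(2\pi f_0 x_1)\cdot e^{-\e_1 2\pi f_1 x_1}$ inside the commutative subfield $\C_{\e_1}\subset\O$, obtaining
\begin{align*}
\sin(2\pi f_0 x_1)\cdot e^{-\e_1 2\pi f_1 x_1} = \tfrac{\e_1}{2}\bigl(e^{-\e_1 2\pi(f_1+f_0)x_1}-e^{-\e_1 2\pi(f_1-f_0)x_1}\bigr).
\end{align*}
Since $u$ and $\e_1$ generate an associative subalgebra by Artin's theorem for alternative algebras (a consequence of~\eqref{eq:2_3}), the trailing $\e_1$ can be pulled cleanly to the right of each term $u\cdot e^{-\e_1 2\pi(f_1\pm f_0)x_1}$ without regard for bracketing.

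The core of the argument is then to push this trailing $\e_1$ through the remaining exponentials $e^{-\e_2 2\pi f_2 x_2}$ and $e^{-\e_4 2\pi f_3 x_3}$. The pass-through identities I need are
\begin{align*}
(A\cdot\e_1)\cdot e^{-\e_2\alpha} &= (A\cdot e^{\e_2\alpha})\cdot\e_1, &
(A\cdot\e_1)\cdot e^{-\e_4\alpha} &= (A\cdot e^{\e_4\alpha})\cdot\e_1,
\end{align*}
valid for every $A\in\O$ and $\alpha\in\R$. Both follow from the alternating property of the associator $[x,y,z]=(xy)z-x(yz)$ in an alternative algebra: writing $e^{\pm\e_j\alpha}=\cos\alpha\pm\e_j\sin\alpha$ and using the associative identity $\e_1\cdot e^{-\e_j\alpha}=e^{\e_j\alpha}\cdot\e_1$ inside the quaternion subalgebra generated by $\e_1$ and $\e_j$, a short calculation shows that each side of the two identities differs from the common quantity $A\cdot(\e_1\cdot e^{-\e_j\alpha})$ by the same associator $-\sin\alpha\cdot[A,\e_1,\e_j]$. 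Applying these identities in succession replaces $e^{-\e_2 2\pi f_2 x_2}$ by $e^{\e_2 2\pi f_2 x_2}$ and $e^{-\e_4 2\pi f_3 x_3}$ by $e^{\e_4 2\pi f_3 x_3}$; under the Fourier integral this is exactly the reflection $(f_2,f_3)\mapsto(-f_2,-f_3)$, so the two $\pm$-terms integrate to $U(f_1\pm f_0,-f_2,-f_3)\cdot\e_1/2$ and the claim for $i=1$ follows.

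The cases $i=2$ and $i=3$ will follow the same template with $\boldsymbol{\mu}=\e_2$ and $\boldsymbol{\mu}=\e_4$ respectively. After first moving the real scalar $\sin(2\pi f_0 x_i)$ past the exponentials preceding $e^{-\e_i 2\pi f_i x_i}$ (permissible because each binary subalgebra generated by $u$ and a single $\e_k$ is associative) and combining it with $e^{-\e_i 2\pi f_i x_i}$ as above, for $i=2$ the trailing $\e_2$ needs to be pushed only through the single factor $e^{-\e_4 2\pi f_3 x_3}$, reflecting only $f_3$; and for $i=3$ the trailing $\e_4$ is already in the rightmost position, so no frequency argument is reflected. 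The step I expect to be the main technical obstacle is precisely the pass-through identity above: because the triple $(A,\e_1,e^{-\e_j\alpha})$ does not lie in any single rank-two subalgebra, Artin's theorem fails to apply directly, and one must appeal to the alternating-associator argument (or, equivalently, perform a direct basis check against Table~\ref{tab:2_1}). Once those identities are in place, the remainder of the proof is a routine rearrangement of integrals mirroring the cosine case.
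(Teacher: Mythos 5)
Your proposal is correct and follows essentially the same route as the paper: both express $\sin(2\pi f_0x_i)$ via~\eqref{eq:2_4} with $\boldsymbol{\mu}=\e_1,\e_2,\e_4$ and then pull the resulting imaginary unit to the far right of the kernel, which reflects the frequencies of the exponentials it passes through; your pass-through identities are exactly the content of the paper's equations~\eqref{eq:6_28}--\eqref{eq:6_30}. The only difference is cosmetic: the paper verifies those identities ``by direct calculation,'' while you derive them from the alternating associator (linearized alternativity), which is a valid and arguably cleaner justification.
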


\begin{proof}
We proceed similarly as in proof of Theorem~\ref{the:6_7}. We will also use the equivalent definition of the sine function formulated in~equation~\eqref{eq:2_4}, i.e. \begin{align} \label{eq:6_27sin}
\sin\alpha = \frac{1}{2\e_1}\big(e^{\e_1 \alpha}-e^{-\e_1 \alpha}\big)
= \frac{1}{2\e_2}\big(e^{\e_2 \alpha}-e^{-\e_2 \alpha}\big)
= \frac{1}{2\e_4}\big(e^{\e_4 \alpha}-e^{-\e_4 \alpha}\big).
\end{align} The following properties of octonion numbers, which can be derived using direct calculations, will also be necessary. For any $o\in\O$ and $\alpha_1,\alpha_2,\alpha_3\in\R$ we have \begin{align}
&\Big(\big(o\cdot (e^{-\e_1\alpha_1}\cdot\e_1)\big)\cdot e^{-\e_2\alpha_2}\Big)\cdot e^{-\e_4\alpha_3} =
\Big(\big((o\cdot e^{-\e_1\alpha_1})\cdot e^{\e_2\alpha_2}\big)\cdot e^{\e_4\alpha_3}\Big)\cdot\e_1, \label{eq:6_28}\\
& \big((o\cdot e^{-\e_1\alpha_1})\cdot (e^{-\e_2\alpha_2}\cdot\e_2)\big)\cdot e^{-\e_4\alpha_3} =
\Big(\big((o\cdot e^{-\e_1\alpha_1})\cdot e^{-\e_2\alpha_2}\big)\cdot e^{\e_4\alpha_3}\Big)\cdot\e_2, \label{eq:6_29}\\
& \big((o\cdot e^{-\e_1\alpha_1})\cdot e^{-\e_2\alpha_2}\big)\cdot (e^{-\e_4\alpha_3}\cdot\e_4) =
\Big(\big((o\cdot e^{-\e_1\alpha_1})\cdot e^{-\e_2\alpha_2}\big)\cdot e^{-\e_4\alpha_3}\Big)\cdot\e_4. \label{eq:6_30}
\end{align} Then, for $i=1$ we have \begin{align*}
U^{\mathrm{sin},1}(f_1,f_2,f_3) &= \int_{\R^3} \big(u(\x)\cdot\sin(2\pi f_0x_1)\big) e^{-\e_1 2\pi f_1x_1}e^{-\e_2 2\pi f_2x_2}e^{-\e_4 2\pi f_3x_3}\,\de\x \displaybreak[3]\\
&= \int_{\R^3} u(\x)\big(e^{-\e_1 2\pi f_1x_1}\sin(2\pi f_0x_1)\big)e^{-\e_2 2\pi f_2x_2}e^{-\e_4 2\pi f_3x_3}\,\de\x \displaybreak[3]\\
&= -\int_{\R^3} u(\x)\big(e^{-\e_1 2\pi f_1x_1}\cdot\e_1\sin(2\pi f_0x_1)\cdot\e_1\big)e^{-\e_2 2\pi f_2x_2}e^{-\e_4 2\pi f_3x_3}\,\de\x \displaybreak[3]\\
&\!\!\!\overset{\eqref{eq:6_28}}{=}\!\!\! -\frac{1}{2} \int_{\R^3} u(\x)\big(e^{-\e_1 2\pi f_1x_1}(e^{\e_1 2\pi f_0x_1}-e^{-\e_1 2\pi f_0x_1})\big)e^{\e_2 2\pi f_2x_2}e^{\e_4 2\pi f_3x_3}\,\de\x \cdot\e_1\\
&= -\frac{1}{2} \int_{\R^3} u(\x)\big(e^{-\e_1 2\pi (f_1-f_0)x_1}-e^{-\e_1 2\pi (f_1+f_0)x_1}\big)e^{-\e_2 2\pi f_2x_2}e^{-\e_4 2\pi f_3x_3}\,\de\x \\
&= -\frac{1}{2}\big(U(f_1-f_0,f_2,f_3)-U(f_1+f_0,f_2,f_3)\big)\cdot\e_1,
\end{align*} which concludes the proof. For $i=2,3$ the property is proved analogously, using equations~\eqref{eq:6_29} and~\eqref{eq:6_30}.
\end{proof}

It may seem that the next theorem is a simple consequence of Theorem~\ref{the:6_7} and~\ref{the:6_7sin}. However, that is not the case since OFT is not a~$\O$-linear operation. We need to prove it independently.

\begin{theorem} \label{the:6_8}
Let $f_0\in\R$ and $u^{\mathrm{exp},i}(\x)=u(\x)\cdot\exp(-\e_{2^{i-1}}2\pi f_0x_i)$, $U^{\mathrm{exp},i}=\FourierX{OFT}{u^{\mathrm{exp},i}}$, $i=1,2,3$. Then \begin{align*}
U^{\mathrm{exp},1}(f_1,f_2,f_3) &= U(f_1+f_0,f_2,f_3), \\
U^{\mathrm{exp},2}(f_1,f_2,f_3) &= \big(U(f_1,f_2+f_0,f_3)+U(f_1,f_2-f_0,f_3) \nonumber\\
&\qquad +U(-f_1,f_2+f_0,f_3)-U(-f_1,f_2-f_0,f_3)\big)\cdot\frac{1}{2}, \\
U^{\mathrm{exp},3}(f_1,f_2,f_3) &= \big(U(f_1,f_2,f_3+f_0)+U(f_1,f_2,f_3-f_0) \nonumber\\
&\qquad +U(-f_1,-f_2,f_3+f_0)-U(-f_1,-f_2,f_3-f_0)\big)\cdot\frac{1}{2}.
\end{align*}
\end{theorem}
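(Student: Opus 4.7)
The plan is to handle the three cases separately, with $i=1$ essentially immediate and $i=2,3$ demanding more careful octonion arithmetic. For $i=1$, both the inserted factor $e^{-\e_1 2\pi f_0 x_1}$ and the first kernel factor $e^{-\e_1 2\pi f_1 x_1}$ lie in the complex subfield~$\C_{\e_1}$, so the alternativity identities~\eqref{eq:2_3} permit reassociation, giving $(u(\x)\cdot e^{-\e_1 2\pi f_0 x_1})\cdot e^{-\e_1 2\pi f_1 x_1} = u(\x)\cdot e^{-\e_1 2\pi(f_0+f_1)x_1}$. Substituting into~\eqref{eq:oct_def} immediately yields $U(f_1+f_0, f_2, f_3)$.

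For $i=2$ and $i=3$, I would mirror the strategy of the proof of Theorem~\ref{the:6_7sin}. First apply Euler's formula, $\exp(-\e_{2^{i-1}}2\pi f_0 x_i) = \cos(2\pi f_0 x_i) - \e_{2^{i-1}}\sin(2\pi f_0 x_i)$, and use the $\R$-linearity of OFT (Theorem~\ref{the:3_3}) to split $U^{\exp,i}$ into a cosine part and a sine part. The cosine part is handled directly by Theorem~\ref{the:6_7}, producing the two terms of the form $\frac{1}{2}U(\ldots,f_i+f_0,\ldots) + \frac{1}{2}U(\ldots,f_i-f_0,\ldots)$ in the target formula. The sine part, $\FourierX{OFT}{(u\cdot\e_{2^{i-1}})\sin(2\pi f_0 x_i)}$, cannot be reduced via Theorem~\ref{the:6_7sin}: that would require a closed form for the OFT of $u\cdot\e_{2^{i-1}}$, but the OFT is not $\O$-linear. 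Instead, one rewrites $\sin$ via~\eqref{eq:2_4}, uses alternativity in the form $(u\cdot\e_{2^{i-1}})\cdot\e_{2^{i-1}} = -u$ to absorb one factor of $\e_{2^{i-1}}$, and then works from the definition of OFT directly.

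The remaining manipulations rely on the anticommutation identity $\e_{2^{i-1}}\cdot e^{-\e_{2^{j-1}}\alpha_j} = e^{\e_{2^{j-1}}\alpha_j}\cdot\e_{2^{i-1}}$ for $i\neq j$ (verified by expanding both sides using $e^{\e_k\alpha}=\cos\alpha+\e_k\sin\alpha$ together with the basis anticommutation $\e_i\e_j=-\e_j\e_i$), together with auxiliary associator identities in the same spirit as~\eqref{eq:6_28}--\eqref{eq:6_30}. Each passage of $\e_{2^{i-1}}$ through a kernel exponential in a distinct basis unit flips the sign of the corresponding frequency, producing the arguments $-f_1$ for $i=2$ and $-f_1,-f_2$ for $i=3$ that appear in the claimed formulas. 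The principal obstacle will be the bookkeeping: the strict left-to-right convention combined with the non-vanishing of general associators $[u,\e_j,\e_k]$ forbids free reassociation, so the argument must exploit the specific product structure of the kernel $e^{-\e_1\alpha_1}\cdot e^{-\e_2\alpha_2}\cdot e^{-\e_4\alpha_3}$ and the alternativity and flexibility properties established in Section~\ref{sec:2}. Once the sine part is expressed as a difference of two OFTs of $u$ at appropriately sign-flipped frequencies, combining with the cosine contribution yields the claimed four-term formulas.
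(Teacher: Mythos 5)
Your proposal follows essentially the same route as the paper: split the exponential into cosine and sine parts via Euler's formula, dispose of the cosine part as in the cosine modulation theorem, and handle the sine part by writing $\sin$ in exponential form and passing $\e_{2^{i-1}}$ through the kernel exponentials of distinct units via anticommutation combined with alternativity-based associator identities (the paper's~\eqref{eq:6_31}--\eqref{eq:6_33}), which produces exactly the sign flips $-f_1$ (for $i=2$) and $-f_1,-f_2$ (for $i=3$). Your correct observation that Theorem~\ref{the:6_7sin} cannot be invoked directly because the OFT is not $\O$-linear is also the paper's stated motivation for an independent proof.
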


\begin{proof}
Properties in the claim of this theorem are proved similarly as those of Theorem~\ref{the:6_7} and~\ref{the:6_7sin} -- using equations~\eqref{eq:6_27cos} and~\eqref{eq:6_27sin}. Furthermore we will use the following fact -- for each $o\in\O$ and $\alpha_1,\alpha_2,\alpha_3\in\R$ we have: \begin{align}
&\Big(\big((o\cdot\e_1)\cdot e^{-\e_1\alpha_1}\big)\cdot e^{-\e_2\alpha_2}\Big)\cdot e^{-\e_4\alpha_3} =
\Big(\big(o\cdot (e^{-\e_1\alpha_1}\cdot\e_1)\big)\cdot e^{-\e_2\alpha_2}\Big)\cdot e^{-\e_4\alpha_3}, \label{eq:6_31}\\
&\Big(\big((o\cdot\e_2)\cdot e^{-\e_1\alpha_1}\big)\cdot e^{-\e_2\alpha_2}\Big)\cdot e^{-\e_4\alpha_3} =
\big((o\cdot e^{\e_1\alpha_1})\cdot (e^{-\e_2\alpha_2}\cdot\e_2)\big)\cdot e^{-\e_4\alpha_3}, \label{eq:6_32}\\
&\Big(\big((o\cdot\e_4)\cdot e^{-\e_1\alpha_1}\big)\cdot e^{-\e_2\alpha_2}\Big)\cdot e^{-\e_4\alpha_3} =
\big((o\cdot e^{\e_1\alpha_1})\cdot e^{\e_2\alpha_2}\big)\cdot (e^{-\e_4\alpha_3}\cdot\e_4). \label{eq:6_33}
\end{align} Then for $i=3$ we have \begin{align*}
U^{\mathrm{exp},3}(f_1&,f_2,f_3) = \int_{\R^3} \big(u(\x)\cdot e^{-\e_4 2\pi f_0x_3}\big) e^{-\e_1 2\pi f_1x_1}e^{-\e_2 2\pi f_2x_2}e^{-\e_4 2\pi f_3x_3}\,\de\x \displaybreak[3]\\
&= \int_{\R^3} \big(u(\x)\cdot (\cos(2\pi f_0x_3)-\e_4\sin(2\pi f_0x_3)) \big) e^{-\e_1 2\pi f_1x_1}e^{-\e_2 2\pi f_2x_2}e^{-\e_4 2\pi f_3x_3}\,\de\x \displaybreak[3]\\
&= \int_{\R^3} \big(u(\x)\cdot \cos(2\pi f_0x_3)\big) e^{-\e_1 2\pi f_1x_1}e^{-\e_2 2\pi f_2x_2}e^{-\e_4 2\pi f_3x_3}\,\de\x \\
&\quad -\int_{\R^3} \big(u(\x)\cdot \e_4\sin(2\pi f_0x_3) \big) e^{-\e_1 2\pi f_1x_1}e^{-\e_2 2\pi f_2x_2}e^{-\e_4 2\pi f_3x_3}\,\de\x \displaybreak[3]\\
&\!\!\!\overset{\eqref{eq:6_33}}{=}\!\!\!  \int_{\R^3} u(\x)e^{-\e_1 2\pi f_1x_1}e^{-\e_2 2\pi f_2x_2}\big(e^{-\e_4 2\pi f_3x_3}\cdot \cos(2\pi f_0x_3)\big) \,\de\x \\
&\quad -\int_{\R^3} u(\x) e^{\e_1 2\pi f_1x_1}e^{\e_2 2\pi f_2x_2}\big(e^{-\e_4 2\pi f_3x_3}\cdot \e_4\sin(2\pi f_0x_3)\big)\,\de\x \displaybreak[3]\\
&= \frac{1}{2}\big(U(f_1,f_2,f_3-f_0)+U(f_1,f_2,f_3+f_0)\big) \\
&\quad -\frac{1}{2}\big(U(-f_1,-f_2,f_3-f_0)-U(-f_1,-f_2,f_3+f_0)\big),
\end{align*} which concludes the proof in this case. Proofs for $i=1,2$ are similar and use equations~\eqref{eq:6_31} and~\eqref{eq:6_32}.
\end{proof}

To complete our considerations about properties of the octonion Fourier transforms of octonion-valued functions, we should also state and prove the shift theorem. In case of real-valued functions we already stated this theorem in our earlier work, i.e. article~\cite{BlaszczykSnopek2017}. 

\begin{theorem} \label{the:6_8a}
Let $\alpha,\beta,\gamma\in\R$ and denote $u^{\alpha}(\x)=u(x_1-\alpha,x_2,x_3)$, $u^{\beta}(\x)=u(x_1,x_2-\beta,x_3)$ and~$u^{\gamma}(\x)=u(x_1,x_2,x_3-\gamma)$. Let $U^\ell=\FourierX{OFT}{u^\ell}$, $\ell=\alpha,\beta,\gamma$. Then \begin{align}
U^{\alpha}(f_1,f_2,f_3) &= \cos(2\pi f_1\alpha) U(f_1,f_2,f_3) - \sin(2\pi f_1\alpha)U(f_1,-f_2,-f_3)\cdot \e_1, \\
U^{\beta}(f_1,f_2,f_3) &= \cos(2\pi f_2\beta) U(f_1,f_2,f_3) - \sin(2\pi f_2\beta)U(f_1,\;\;\; f_2,-f_3)\cdot \e_2, \label{eq:6_34}\\
U^{\gamma}(f_1,f_2,f_3) &= \cos(2\pi f_3\gamma) U(f_1,f_2,f_3) - \sin(2\pi f_3\gamma)U(f_1,\;\;\; f_2,\;\;\; f_3)\cdot \e_4. \label{eq:6_35}
\end{align}
\end{theorem}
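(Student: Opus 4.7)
The plan is to prove each identity by a straightforward change of variable in the defining integral for $U^{\ell}(\f)$, $\ell\in\{\alpha,\beta,\gamma\}$, extracting a constant factor $e^{-\e_{k} 2\pi f_k \ell}$ (with $k=1,2,4$ for $\ell=\alpha,\beta,\gamma$ respectively), splitting it via Euler's formula~\eqref{eq:2_4} into $\cos - \e_k\sin$, and then pushing the residual imaginary unit $\e_k$ through the remaining exponentials to the outside of the product, picking up sign flips in the frequency arguments from the non-associativity of octonion multiplication.

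For the $\alpha$-shift I substitute $y_1=x_1-\alpha$ and observe that $e^{-\e_1 2\pi f_1 y_1}$ and $e^{-\e_1 2\pi f_1 \alpha}$ both lie in the complex subalgebra generated by~$\e_1$; invoking Artin's theorem (in any alternative algebra the subalgebra generated by two elements is associative) I re-associate the integrand as
\[
    \Big(\big((u(\y)\cdot e^{-\e_1 2\pi f_1 y_1})\cdot e^{-\e_1 2\pi f_1 \alpha}\big)\cdot e^{-\e_2 2\pi f_2 x_2}\Big)\cdot e^{-\e_4 2\pi f_3 x_3}.
\]
Distributing $e^{-\e_1 2\pi f_1 \alpha}=\cos(2\pi f_1\alpha)-\e_1\sin(2\pi f_1\alpha)$, the cosine contribution is immediately $\cos(2\pi f_1\alpha)\,U(\f)$. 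For the sine contribution, setting $w=u\cdot e^{-\e_1 2\pi f_1 y_1}$, the heart of the argument is the identity
\[
    \big((w\cdot\e_1)\cdot e^{-\e_2 2\pi f_2 x_2}\big)\cdot e^{-\e_4 2\pi f_3 x_3}
    = \big((w\cdot e^{\e_2 2\pi f_2 x_2})\cdot e^{\e_4 2\pi f_3 x_3}\big)\cdot\e_1,
\]
after which integration over $\x$ produces exactly $U(f_1,-f_2,-f_3)\cdot\e_1$ and the formula follows. Expanding both exponentials by~\eqref{eq:2_4} reduces this identity, term by term, to the two alternator relations $(w\cdot\e_1)\cdot\e_2=-(w\cdot\e_2)\cdot\e_1$ and $(w\cdot\e_1)\cdot\e_4=-(w\cdot\e_4)\cdot\e_1$, together with their iterate $((w\cdot\e_1)\cdot\e_2)\cdot\e_4=((w\cdot\e_2)\cdot\e_4)\cdot\e_1$; each is a direct consequence of the associator $(a,b,c)=(a\cdot b)\cdot c-a\cdot(b\cdot c)$ being alternating in its three arguments and of the multiplication rules of Table~\ref{tab:2_1}.

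The $\beta$-shift case is identical in spirit but shorter: the extracted factor $e^{-\e_2 2\pi f_2 \beta}$ sits between $e^{-\e_2 2\pi f_2 x_2}$ and $e^{-\e_4 2\pi f_3 x_3}$ (Artin is applied inside the $\e_2$-subalgebra), and the sine part only requires commuting $\e_2$ past $e^{-\e_4 2\pi f_3 x_3}$, which by the single alternator identity $(v\cdot\e_2)\cdot\e_4=-(v\cdot\e_4)\cdot\e_2$ flips exactly the sign of $f_3$, producing $U(f_1,f_2,-f_3)\cdot\e_2$. The $\gamma$-shift is the simplest of all: the factor $e^{-\e_4 2\pi f_3 \gamma}$ is rightmost in the product, so no commutation is required and $\e_4$ slides straight out, giving $U(f_1,f_2,f_3)\cdot\e_4$.

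The main obstacle is the bookkeeping forced by non-associativity: parentheses cannot be moved cavalierly, one must invoke Artin's theorem precisely when two adjacent factors sit in a common complex subalgebra, and the alternating associator must be used exactly when a lone imaginary unit must traverse a kernel exponential built on a different imaginary axis. Once the commutation lemma underlying the $\alpha$-case is isolated, the remaining two cases collapse to routine bilinear expansions.
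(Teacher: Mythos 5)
Your proposal is correct and follows essentially the same route as the paper's proof: substitution, factoring out the constant exponential, splitting it via Euler's formula, and pushing the residual imaginary unit through the remaining kernel factors using the commutation identities (the paper's equations \eqref{eq:6_28}--\eqref{eq:6_30}, which it justifies by "direct calculations" where you derive them from the alternating associator). The only difference is presentational: you make the appeals to Artin's theorem and to the alternator relations explicit, whereas the paper states the needed identities outright.
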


\begin{proof}
We will use again the tools used in proofs of previous claims. Consider the OFT of function $u^{\alpha}$. Using integration by substitution we get \begin{align*}
U^{\alpha}(f_1&,f_2,f_3) = \int_{\R^3} u(x_1-\alpha,x_2,x_3) e^{-\e_1 2\pi f_1x_1}e^{-\e_2 2\pi f_2x_2}e^{-\e_4 2\pi f_3x_3}\,\de\x \displaybreak[3]\\
&= \int_{\R^3} u(x_1,x_2,x_3) e^{-\e_1 2\pi f_1(x_1+\alpha)}e^{-\e_2 2\pi f_2x_2}e^{-\e_4 2\pi f_3x_3}\,\de\x \displaybreak[3] \\
&= \int_{\R^3} u(\x) (e^{-\e_1 2\pi f_1x_1}\cdot e^{-\e_1 2\pi f_1\alpha})e^{-\e_2 2\pi f_2x_2}e^{-\e_4 2\pi f_3x_3}\,\de\x \displaybreak[3] \\
&= \int_{\R^3} u(\x) \big(e^{-\e_1 2\pi f_1x_1}\cdot (\cos(2\pi f_1\alpha)-\e_1\sin(2\pi f_1\alpha))\big)e^{-\e_2 2\pi f_2x_2}e^{-\e_4 2\pi f_3x_3}\,\de\x \displaybreak[3] \\
&\!\!\!\overset{\eqref{eq:6_28}}{=}\!\!\!   \cos(2\pi f_1\alpha) \int_{\R^3} u(\x) e^{-\e_1 2\pi f_1x_1}e^{-\e_2 2\pi f_2x_2}e^{-\e_4 2\pi f_3x_3}\,\de\x \\
&\quad-\sin(2\pi f_1\alpha) \int_{\R^3} u(\x) e^{-\e_1 2\pi f_1x_1}e^{\e_2 2\pi f_2x_2}e^{\e_4 2\pi f_3x_3}\,\de\x\cdot\e_1 \displaybreak[3]\\
&=\cos(2\pi f_1\alpha) U(f_1,f_2,f_3) - \sin(2\pi f_1\alpha)U(f_1,-f_2,-f_3)\cdot \e_1,
\end{align*} which concludes the proof for $u^{\alpha}$. The derivation of \eqref{eq:6_34} and~\eqref{eq:6_35} is very similar and utilises properties~\eqref{eq:6_29} and~\eqref{eq:6_30}.
\end{proof}

Next properties that we will prove are a~key element in the analysis of multidimensional linear time-invariant systems described by a~system of partial differential equations. In our considerations, however, we will limit ourselves to real-valued functions and from now on we assume that $u,v\colon\R^3\to\R$ and $U$ and $V$ are the OFTs of $u$ and $v$, respectively. We will also assume that the relevant derivatives of $u$ exist, as well as their OFTs.
\medskip

\begin{theorem} \label{thm:deriv}
Let $U^{\partial x_1}$, $U^{\partial x_2}$ and $U^{\partial x_3}$ denote the OFTs of $u_{x_1}$, $u_{x_2}$ and $u_{x_3}$, respectively. Then \begin{align}
U^{\partial x_1}(f_1,f_2,f_3) &= U(f_1, -f_2, -f_3)\cdot(2\pi f_1\e_1), \label{eq:3_1} \\
U^{\partial x_2}(f_1,f_2,f_3) &= U(f_1, \phantom{-}f_2, -f_3)\cdot(2\pi f_2\e_2), \\
U^{\partial x_3}(f_1,f_2,f_3) &= U(f_1, \phantom{-}f_2, \phantom{-}f_3)\cdot(2\pi f_3\e_4).
\end{align}
\end{theorem}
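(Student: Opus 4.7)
The plan is to prove each identity by starting from the OFT definition of the partial derivative, applying integration by parts in the corresponding variable (the boundary terms vanishing under the standing integrability assumptions on $u$ and its derivatives), and then transporting the emerging imaginary unit through the remaining exponentials by means of the non-associativity identities \eqref{eq:6_28}--\eqref{eq:6_30} already established for the sine modulation theorem. Since $u$ is real-valued, $u(\x)$ lies in the center of $\O$ and can be freely reordered, which removes a layer of technical difficulty.

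For \eqref{eq:3_1}, I would write $U^{\partial x_1}(\f) = \int_{\R^3} u_{x_1}(\x)\, e^{-\e_1 2\pi f_1 x_1}\, e^{-\e_2 2\pi f_2 x_2}\, e^{-\e_4 2\pi f_3 x_3}\,\de\x$, integrate by parts in $x_1$, and use that $\partial_{x_1} e^{-\e_1 2\pi f_1 x_1} = e^{-\e_1 2\pi f_1 x_1}\cdot(-\e_1 2\pi f_1)$ to obtain an integrand of the form $u(\x)\cdot 2\pi f_1\cdot(e^{-\e_1 \alpha_1}\cdot\e_1)\cdot e^{-\e_2 \alpha_2}\cdot e^{-\e_4 \alpha_3}$. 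Identity \eqref{eq:6_28}, applied with $o = u(\x)\cdot 2\pi f_1$, rewrites this as $\big((u(\x)\cdot 2\pi f_1\cdot e^{-\e_1 \alpha_1})\cdot e^{\e_2 \alpha_2}\cdot e^{\e_4 \alpha_3}\big)\cdot\e_1$. Integrating then gives precisely $U(f_1,-f_2,-f_3)\cdot(2\pi f_1\e_1)$.

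The arguments for the second and third identities proceed by exactly the same template, differing only in which variable one integrates by parts and which identity one applies. For $u_{x_2}$, integration by parts in $x_2$ produces a factor $(e^{-\e_2 \alpha_2}\cdot\e_2)$ sitting between $e^{-\e_1 \alpha_1}$ and $e^{-\e_4 \alpha_3}$, and identity \eqref{eq:6_29} shifts the $\e_2$ to the far right while flipping the sign of $\alpha_3$, yielding $U(f_1,f_2,-f_3)\cdot(2\pi f_2\e_2)$. For $u_{x_3}$, identity \eqref{eq:6_30} shows that the factor $\e_4$ passes to the right without any sign flips, giving $U(f_1,f_2,f_3)\cdot(2\pi f_3\e_4)$.

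The only real obstacle is conceptual rather than computational: one must resist the temptation to treat the exponentials as commuting, and instead keep careful track of bracketings so that the prepared identities \eqref{eq:6_28}--\eqref{eq:6_30} can be invoked verbatim. Because these identities were already established earlier in the section, the present theorem reduces, for each of the three cases, to one integration by parts followed by a single algebraic substitution.
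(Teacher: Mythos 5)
Your proof is correct, but it takes a genuinely different route from the one in the paper. The paper's proof never touches the octonion exponential kernel directly: it expands $U^{\partial x_i}$ into the eight parity components $U^{\partial x_i}_{xyz}$ of \eqref{eq:3_20}--\eqref{eq:3_20h}, performs a separate (purely real) integration by parts on each of the eight sine/cosine integrals, and then reassembles the resulting combination of $U_{xyz}$'s into $U(f_1,-f_2,-f_3)\cdot(2\pi f_1\e_1)$ by reading off the parities. You instead integrate by parts once on the full octonion-valued integrand, using that $\partial_{x_1}e^{-\e_1 2\pi f_1x_1}=e^{-\e_1 2\pi f_1x_1}\cdot(-\e_1 2\pi f_1)$ and that right-multiplication by a fixed octonion is $\R$-linear (so it commutes with $\partial_{x_1}$ and with the integral), and then invoke the prepared identities \eqref{eq:6_28}--\eqref{eq:6_30} to transport the emerging unit to the far right while flipping the signs of the appropriate frequency arguments. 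Both arguments are sound under the paper's standing assumptions (vanishing of $u$ at infinity, existence of the relevant derivatives and transforms); yours is shorter and makes the mechanism --- one integration by parts plus one non-associativity identity per variable --- transparent, and it reuses the same lemmas that drive the sine modulation and shift theorems, whereas the paper's parity decomposition has the advantage of requiring no octonion re-bracketing at all, since each of the eight integrals is an ordinary real integral and the algebra enters only in the final regrouping. The one place where care is genuinely needed in your version is the bracketing of the post-integration-by-parts integrand: it must come out exactly as $\big(\big(u\cdot(e^{-\e_1\alpha_1}\cdot\e_1)\big)\cdot e^{-\e_2\alpha_2}\big)\cdot e^{-\e_4\alpha_3}$ (and analogously for $x_2$, $x_3$) so that \eqref{eq:6_28}--\eqref{eq:6_30} apply verbatim; since $u$ is real and hence central, this is indeed what the left-to-right convention produces, and your remark to that effect closes the gap.
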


\begin{proof}
We will prove only the first formula. Consider derivative $u_{x_1}$ and its octonion Fourier transform $U^{\partial x_1}$. $u_{x_1}$ is a~real-valued function, hence we can write $U^{\partial x_1}$ as a~sum of eight components of different parity \begin{align*}
U^{\partial x_1} = U^{\partial x_1}_{eee} - U^{\partial x_1}_{oee}\e_1 - U^{\partial x_1}_{eoe}\e_2 + U^{\partial x_1}_{ooe}\e_3 - U^{\partial x_1}_{eeo}\e_4 + U^{\partial x_1}_{oeo}\e_5 + U^{\partial x_1}_{eoo}\e_6 - U^{\partial x_1}_{ooo}\e_7,
\end{align*} where \begin{align*}
U^{\partial x_1}_{eee}(\f) &= \int_{\R^3} u_{x_1}(\x)\cos(2\pi f_1 x_1)\cos(2\pi f_2 x_2)\cos(2\pi f_3 x_3)\,\de \x, \\
U^{\partial x_1}_{oee}(\f) &= \int_{\R^3} u_{x_1}(\x)\sin(2\pi f_1 x_1)\cos(2\pi f_2 x_2)\cos(2\pi f_3 x_3)\,\de \x, \displaybreak[3]\\
U^{\partial x_1}_{eoe}(\f) &= \int_{\R^3} u_{x_1}(\x)\cos(2\pi f_1 x_1)\sin(2\pi f_2 x_2)\cos(2\pi f_3 x_3)\,\de \x, \displaybreak[3]\\
U^{\partial x_1}_{ooe}(\f) &= \int_{\R^3} u_{x_1}(\x)\sin(2\pi f_1 x_1)\sin(2\pi f_2 x_2)\cos(2\pi f_3 x_3)\,\de \mathbf{x}, \displaybreak[3]\\
U^{\partial x_1}_{eeo}(\f) &= \int_{\R^3} u_{x_1}(\x)\cos(2\pi f_1 x_1)\cos(2\pi f_2 x_2)\sin(2\pi f_3 x_3)\,\de \x, \displaybreak[3]\\
U^{\partial x_1}_{oeo}(\f) &= \int_{\R^3} u_{x_1}(\x)\sin(2\pi f_1 x_1)\cos(2\pi f_2 x_2)\sin(2\pi f_3 x_3)\,\de \x, \displaybreak[3]\\
U^{\partial x_1}_{eoo}(\f) &= \int_{\R^3} u_{x_1}(\x)\cos(2\pi f_1 x_1)\sin(2\pi f_2 x_2)\sin(2\pi f_3 x_3)\,\de \x, \displaybreak[3]\\
U^{\partial x_1}_{ooo}(\f) &= \int_{\R^3} u_{x_1}(\x)\sin(2\pi f_1 x_1)\sin(2\pi f_2 x_2)\sin(2\pi f_3 x_3)\,\de \x, 
\end{align*} where $\f=(f_1,f_2,f_3)$, $\x=(x_1,x_2,x_3)$. 
\medskip

We will use integration by parts and utilize the fact that for every integrable and smooth function $u$ and every $x_2,x_3\in\R$ we have $\lim\limits_{x_1\to\pm\infty} u(\x)=0$. Then \begin{align*}
U^{\partial x_1}_{eee}(\f) &= \phantom{-}\int_{\R^3} u(\x)\sin(2\pi f_1 x_1)\cos(2\pi f_2 x_2)\cos(2\pi f_3 x_3)\,\de \x\cdot 2\pi f_1=\phantom{-}U_{oee}(\f)\cdot 2\pi f_1, \displaybreak[3]\\
U^{\partial x_1}_{oee}(\f) &= -\int_{\R^3} u(\x)\cos(2\pi f_1 x_1)\cos(2\pi f_2 x_2)\cos(2\pi f_3 x_3)\,\de \x\cdot 2\pi f_1=-U_{eee}(\f)\cdot 2\pi f_1, \displaybreak[3]\\
U^{\partial x_1}_{eoe}(\f) &= \phantom{-}\int_{\R^3} u(\x)\sin(2\pi f_1 x_1)\sin(2\pi f_2 x_2)\cos(2\pi f_3 x_3)\,\de \x\cdot 2\pi f_1=\phantom{-}U_{ooe}(\f)\cdot 2\pi f_1, \displaybreak[3]\\
U^{\partial x_1}_{ooe}(\f) &= -\int_{\R^3} u(\x)\cos(2\pi f_1 x_1)\sin(2\pi f_2 x_2)\cos(2\pi f_3 x_3)\,\de \x\cdot 2\pi f_1=-U_{eoe}(\f)\cdot 2\pi f_1, \displaybreak[3]\\
U^{\partial x_1}_{eeo}(\f) &= \phantom{-}\int_{\R^3} u(\x)\sin(2\pi f_1 x_1)\cos(2\pi f_2 x_2)\sin(2\pi f_3 x_3)\,\de \x\cdot 2\pi f_1=\phantom{-}U_{oeo}(\f)\cdot 2\pi f_1, \displaybreak[3]\\
U^{\partial x_1}_{oeo}(\f) &= -\int_{\R^3} u(\x)\cos(2\pi f_1 x_1)\cos(2\pi f_2 x_2)\sin(2\pi f_3 x_3)\,\de \x\cdot 2\pi f_1=-U_{eeo}(\f)\cdot 2\pi f_1, \displaybreak[3]\\
U^{\partial x_1}_{eoo}(\f) &= \phantom{-}\int_{\R^3} u(\x)\sin(2\pi f_1 x_1)\sin(2\pi f_2 x_2)\sin(2\pi f_3 x_3)\,\de \x\cdot 2\pi f_1=\phantom{-}U_{ooo}(\f)\cdot 2\pi f_1, \displaybreak[3]\\
U^{\partial x_1}_{ooo}(\f) &= -\int_{\R^3} u(\x)\cos(2\pi f_1 x_1)\sin(2\pi f_2 x_2)\sin(2\pi f_3 x_3)\,\de \x\cdot 2\pi f_1=-U_{eoo}(\f)\cdot 2\pi f_1, 
\end{align*} where \begin{align*}
U = U_{eee} - U_{oee}\e_1 - U_{eoe}\e_2 + U_{ooe}\e_3 - U_{eeo}\e_4 + U_{oeo}\e_5 + U_{eoo}\e_6 - U_{ooo}\e_7
\end{align*} is a~sum of eight components of different parity, as explained in~Section~\ref{sec:2}. Hence \begin{align*}
U^{\partial x_1} &= \big(U_{oee} + U_{eee}\e_1 - U_{ooe}\e_2 - U_{eoe}\e_3 - U_{oeo}\e_4 - U_{eeo}\e_5 + U_{ooo}\e_6 + U_{eoo}\e_7\big)\cdot 2\pi f_1 \\
&= \big(U_{eee} - U_{oee}\e_1 + U_{eoe}\e_2 - U_{ooe}\e_3 + U_{eeo}\e_4 - U_{oeo}\e_5 + U_{eoo}\e_6 - U_{ooo}\e_7\big)\cdot (2\pi f_1 \e_1).
\end{align*} Considering the parity of each component we get formula~\eqref{eq:3_1}. 
\end{proof}

Let us note that the statement of the above theorem is analogous to the claim of the classic version of the Fourier transform of the derivative. The difference is first of all the kind of imaginary unit by which the Fourier transform is multiplied and the change of sign at some variables. This is a characteristic feature of the octonion Fourier transformation. Similar argument leads to the following corollaries for the OFTs of partial derivatives of higher order. 
\medskip

\begin{corollary} \label{cor:3_1}
Let $U^{\partial x_i \ldots x_j}$ denote the OFT of $u_{x_i \ldots x_j}$. Then \begin{align*}
U^{\partial x_1 x_2}(f_1,f_2,f_3) &= U(\phantom{-}f_1,-f_2,-f_3)\cdot(2\pi f_1)(2\pi f_2) \e_3, \\
U^{\partial x_1 x_3}(f_1,f_2,f_3) &= U(\phantom{-}f_1,\phantom{-}f_2,-f_3)\cdot(2\pi f_1)(2\pi f_3)\e_5, \\
U^{\partial x_2 x_3}(f_1,f_2,f_3) &= U(-f_1,\phantom{-}f_2,-f_3)\cdot(2\pi f_2)(2\pi f_3)\e_6, \\
U^{\partial x_1 x_2 x_3}(f_1,f_2,f_3) &= U(-f_1,\phantom{-}f_2,-f_3)\cdot(2\pi f_1)(2\pi f_2)(2\pi f_3)\e_7.
\end{align*}
\end{corollary}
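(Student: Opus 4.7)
The plan is to deduce each identity from Theorem~\ref{thm:deriv} by iterated application, with the non-associativity of octonion multiplication being the nontrivial bookkeeping issue. I will illustrate the strategy for $U^{\partial x_1 x_2}$ and indicate how the remaining three cases follow.

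Writing $u_{x_1 x_2}=\partial_{x_1}(u_{x_2})$ and noting that $u_{x_2}\colon\R^3\to\R$, I would first apply the $x_1$-identity of Theorem~\ref{thm:deriv} with $u$ replaced by $u_{x_2}$ to obtain
\[
U^{\partial x_1 x_2}(\f) = U^{\partial x_2}(f_1,-f_2,-f_3)\cdot(2\pi f_1\,\e_1),
\]
and then evaluate the inner term via the $x_2$-identity of the same theorem, which yields $U^{\partial x_2}(f_1,-f_2,-f_3) = -(2\pi f_2)\,U(f_1,-f_2,f_3)\cdot\e_2$. Combining these gives
\[
U^{\partial x_1 x_2}(\f) = -(2\pi f_1)(2\pi f_2)\,\bigl[U(f_1,-f_2,f_3)\cdot\e_2\bigr]\cdot\e_1,
\]
so the task reduces to the identity $\bigl[U(f_1,-f_2,f_3)\cdot\e_2\bigr]\cdot\e_1 = -\,U(f_1,-f_2,-f_3)\cdot\e_3$.

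Non-associativity means this last identity does \emph{not} hold for a generic octonion-valued function in place of $U$; it depends crucially on the parity structure of $U$, which is available because $u$ is real-valued. To verify it I would expand both sides using the parity decomposition \eqref{eq:3_20}, exploiting that the substitution $f_j\mapsto -f_j$ flips the sign of exactly those $U_{xyz}$ odd in the $j$-th variable, and then reduce each side to the same real-linear combination of the eight components $U_{xyz}$ by applying Table~\ref{tab:2_1} term by term. This is a finite mechanical check, directly parallel to the bookkeeping carried out in the proof of Theorem~\ref{thm:deriv}.

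The three remaining formulas are obtained by exactly the same scheme: for $U^{\partial x_1 x_3}$ and $U^{\partial x_2 x_3}$ I would iterate Theorem~\ref{thm:deriv} in the appropriate pair of variables, and for $U^{\partial x_1 x_2 x_3}$ I would apply the just-proved formula for $U^{\partial x_1 x_2}$ and then Theorem~\ref{thm:deriv} in $x_3$. The main obstacle throughout is the same rearrangement step: iterated application of Theorem~\ref{thm:deriv} naturally produces expressions of the form $[\,\cdots(A\cdot\e_i)\,]\cdot\e_j$, and these cannot be collapsed into the target form $U(\ldots)\cdot c\,\e_k$ by associativity; the collapse must instead be justified component-by-component through the parity decomposition made available by the real-valuedness of $u$.
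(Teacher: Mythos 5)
Your derivation is correct, and the identities you defer to a ``finite mechanical check'' do in fact hold: writing $U$ in the parity form \eqref{eq:3_20} and multiplying term by term with Table~\ref{tab:2_1} confirms $\bigl[U(f_1,-f_2,f_3)\cdot\e_2\bigr]\cdot\e_1=-U(f_1,-f_2,-f_3)\cdot\e_3$, and the analogous collapses $\bigl[U(f_1,f_2,-f_3)\cdot\e_4\bigr]\cdot\e_2=-U(-f_1,f_2,-f_3)\cdot\e_6$ and $\bigl[U(f_1,-f_2,-f_3)\cdot\e_3\bigr]\cdot\e_4=U(-f_1,f_2,-f_3)\cdot\e_7$ yield the remaining three formulas of Corollary~\ref{cor:3_1}. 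The paper itself leaves this corollary without proof, saying only that a ``similar argument'' to Theorem~\ref{thm:deriv} applies; the intended route is evidently to repeat that proof directly --- decompose $u_{x_i x_j}$ into the eight parity components, integrate by parts twice, and re-identify the components of $U$ --- whereas you iterate the already-proved first-order theorem and then justify the collapse of the resulting nested products $[\,\cdot\,\e_i]\cdot\e_j$. Both arguments bottom out in the same parity bookkeeping, but yours has the merit of reusing Theorem~\ref{thm:deriv} and of isolating exactly where non-associativity bites: the naive associative collapse would give $-U(f_1,-f_2,f_3)\cdot\e_3$, i.e.\ it would miss the additional sign flip in $f_3$, which is precisely the correction your parity check supplies. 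The only point worth making explicit is that you silently use equality of mixed partials ($u_{x_1x_2}=\partial_{x_1}(u_{x_2})=\partial_{x_2}(u_{x_1})$), which is covered by the paper's standing assumptions on the existence of the relevant derivatives.
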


An analogous conclusion can also be drawn for pure partial derivatives of the second order. It is worth noting that the claim of the theorem is no different from the corresponding theorem for the classic Fourier transform. We leave claims of Corollary~\ref{cor:3_1} and~\ref{cor:3_2} without proof.

\begin{corollary} \label{cor:3_2}
Let $U^{\partial x_i x_i}$ denote the OFT of $u_{x_i x_i}$. Then \begin{align*}
U^{\partial x_1 x_1}(f_1,f_2,f_3) &= -U(f_1,f_2,f_3)\cdot(2\pi f_1)^2,  \\
U^{\partial x_2 x_2}(f_1,f_2,f_3) &= -U(f_1,f_2,f_3)\cdot(2\pi f_2)^2, \\
U^{\partial x_3 x_3}(f_1,f_2,f_3) &= -U(f_1,f_2,f_3)\cdot(2\pi f_3)^2.
\end{align*}
\end{corollary}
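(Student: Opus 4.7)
The plan is to apply Theorem~\ref{thm:deriv} twice in succession, using $u_{x_i x_i} = (u_{x_i})_{x_i}$, and then to collapse the two resulting trailing factors by invoking the alternative property of the octonion algebra. I carry out the $x_1$ case in detail; the $x_2$ and $x_3$ cases follow the same script.

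First I would apply Theorem~\ref{thm:deriv} with $u_{x_1}$ in place of $u$ to obtain
\begin{align*}
U^{\partial x_1 x_1}(f_1,f_2,f_3) = U^{\partial x_1}(f_1,-f_2,-f_3)\cdot(2\pi f_1\e_1).
\end{align*}
Then I would apply Theorem~\ref{thm:deriv} once more, this time evaluating formula~\eqref{eq:3_1} at $(f_1,-f_2,-f_3)$. The two sign flips in the second and third arguments cancel, giving $U^{\partial x_1}(f_1,-f_2,-f_3) = U(f_1,f_2,f_3)\cdot(2\pi f_1\e_1)$. Substituting back yields
\begin{align*}
U^{\partial x_1 x_1}(f_1,f_2,f_3) = \bigl(U(f_1,f_2,f_3)\cdot(2\pi f_1\e_1)\bigr)\cdot(2\pi f_1\e_1).
\end{align*}

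At this point one cannot freely reassociate, since octonion multiplication is non-associative; this is the step that requires genuine care. However, the two trailing factors are \emph{identical}, so the second identity in~\eqref{eq:2_3} applies and allows me to merge them: $(o_1 \cdot o_2)\cdot o_2 = o_1 \cdot (o_2\cdot o_2)$. Since real scalars commute with every octonion and $\e_1^2 = -1$, one gets $(2\pi f_1\e_1)\cdot(2\pi f_1\e_1) = -(2\pi f_1)^2$, so the whole expression reduces to $-U(f_1,f_2,f_3)\cdot(2\pi f_1)^2$, which is the claim.

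For $U^{\partial x_2 x_2}$ the identical strategy works: the single sign flip in $f_3$ from the first application is undone by the second application, and the trailing factors $(2\pi f_2\e_2)$ coincide, so alternativity plus $\e_2^2=-1$ finishes the job. For $U^{\partial x_3 x_3}$ there are no sign flips to track at all, and one simply uses $\e_4^2 = -1$. The only real obstacle throughout is remembering that reassociation is licit \emph{only} because the two factors to be combined are equal, which is precisely the alternative law; this is exactly why the \emph{mixed} second partials handled in Corollary~\ref{cor:3_1} cannot be collapsed so neatly and instead retain the residual sign flips and the mixed imaginary units $\e_3, \e_5, \e_6$.
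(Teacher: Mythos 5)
Your argument is correct: both applications of Theorem~\ref{thm:deriv} are to real-valued functions ($u$ and $u_{x_i}$), the sign flips in the frequency arguments do cancel exactly as you say, and the non-associativity worry is genuinely resolved by the right alternative law in~\eqref{eq:2_3}, since the two trailing factors $2\pi f_i\e_{2^{i-1}}$ coincide and real scalars are central. The paper itself leaves Corollary~\ref{cor:3_2} without proof, indicating only that a ``similar argument'' to the one for Theorem~\ref{thm:deriv} applies --- that is, redoing the parity decomposition of $U^{\partial x_i x_i}$ into the eight components $U^{\partial x_i x_i}_{xyz}$ and integrating by parts twice. Your route is genuinely different and more economical: it treats the first-order result~\eqref{eq:3_1} as a black box and iterates it, at the cost of having to justify one reassociation, which alternativity supplies for free. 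The paper's implied route is longer but uniform --- it handles pure and mixed second derivatives by the same mechanism --- whereas your closing observation correctly identifies why your shortcut is confined to the pure case: for $U^{\partial x_1 x_2}$ the two trailing factors involve distinct imaginary units, $(o\cdot\e_1)\cdot\e_2\neq o\cdot(\e_1\cdot\e_2)$ in general, and one is forced back to identities of the type used in the proof of Theorem~\ref{the:3_4} (or to the quadruple-complex reformulation of Section~\ref{sec:5}, where $\odot$ is associative and both cases become trivial).
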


Another significant result that can be demonstrated is the OFT differentiation theorem. First of all, however, the concept of differentiation of octonion-valued function should be defined. We will say that the partial derivative $v_{x_i}$, $i=1,2,3$, of the function $v\colon\R^3\to\O$, $v = v_0 + v_1\e_1 + \ldots + v_7\e_7$, exists if and only if all the partial derivatives $v_{j,x_i}$, $i=1,2,3$, $j=0,\ldots,7$, exist and then: \begin{align*}
v_{x_i}(\x) = v_{0,x_i} + v_{1,x_i}(\x)\e_1 + \ldots + v_{7,x_i}(\x)\e_7.
\end{align*} We can now formulate the theorem on partial derivatives of the OFT $U$ of $u\colon\R^3\to\R$, analogous to the theorem known from the classical Fourier analysis. As before, we will assume that all considered derivatives and transforms exist.

\begin{theorem}
Let $V_i$ and $W_i$, where $i=1,2,3$, denote the OFTs of $v_i(\x) = -2\pi x_i u(\x)$ and $w_i(\x) = v_i(\x)\cdot\e_{2^{i-1}}$, $i=1,2,3$, respectively. Then \begin{align}
U_{f_1}(f_1,f_2,f_3) &= V_1(f_1,-f_2,-f_3)\cdot\e_1 = W_1(\phantom{-}f_1,\phantom{-}f_2,f_3), \label{eq:oft_der_x1}\\
U_{f_2}(f_1,f_2,f_3) &= V_2(f_1,\phantom{-}f_2,-f_3)\cdot\e_2 = W_2(-f_1,\phantom{-}f_2,f_3), \\
U_{f_3}(f_1,f_2,f_3) &= V_3(f_1,\phantom{-}f_2,\phantom{-}f_3)\cdot\e_4 = W_3(-f_1,-f_2,f_3).
\end{align}
\end{theorem}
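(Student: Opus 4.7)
The plan is to differentiate the defining formula~\eqref{eq:oct_def} directly under the integral sign, which is legitimate under the standing assumption that $u$, its OFT, and the derivatives in question all exist and decay adequately. Fix $j\in\{1,2,3\}$, write $\alpha_i=2\pi f_ix_i$, and note that only the factor $e^{-\e_{2^{j-1}}\alpha_j}$ depends on $f_j$. Since $\e_{2^{j-1}}$ commutes with $e^{-\e_{2^{j-1}}\alpha_j}$ inside the complex subfield $\C_{\e_{2^{j-1}}}$, the $f_j$-derivative of this factor equals $-2\pi x_j\,e^{-\e_{2^{j-1}}\alpha_j}\cdot\e_{2^{j-1}}$. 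Because $u$ is real-valued, the scalar $-2\pi x_j u(\x)=v_j(\x)$ associates and commutes freely with every octonion, so $U_{f_j}(\f)$ equals the integral of $v_j(\x)$ multiplied (in left-to-right order) by the usual OFT kernel but with an extra $\e_{2^{j-1}}$ inserted immediately after the $j$-th exponential.

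For the first equality $U_{f_j}(\f)=V_j(\ldots)\cdot\e_{2^{j-1}}$, the integrand just described is exactly the left-hand side of identity~\eqref{eq:6_28} for $j=1$, \eqref{eq:6_29} for $j=2$, and \eqref{eq:6_30} for $j=3$, with $o$ set to $v_j(\x)$. Applying the relevant identity pushes $\e_{2^{j-1}}$ out to the rightmost slot and simultaneously flips the signs of the other exponents in exactly the positions predicted, so the integral becomes $V_j$ evaluated at the reflected frequencies and multiplied on the right by $\e_{2^{j-1}}$.

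For the second equality $U_{f_j}(\f)=W_j(\ldots)$, I would expand $W_j$ from its definition as $\FourierX{OFT}{v_j\cdot\e_{2^{j-1}}}$. For $j=1$ this is immediate: associativity of the real scalar $v_1$ with the commuting pair $\e_1$ and $e^{-\e_1\alpha_1}$ identifies the integrand of $W_1(f_1,f_2,f_3)$ with the integrand produced above for $U_{f_1}$, so the two transforms coincide. For $j=2,3$, identities~\eqref{eq:oct1a} and~\eqref{eq:oct1b} allow one to rewrite $W_j(\f)$ as $V_j$ evaluated at a further set of reflected frequencies, again multiplied on the right by $\e_{2^{j-1}}$; matching this with the formula obtained in the previous paragraph and relabelling the frequency arguments yields the sign flips claimed for $W_j$.

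The main obstacle is not conceptual but is the bookkeeping forced by the left-to-right convention in a non-associative algebra. All the non-trivial rearrangements collapse to applications of the identities~\eqref{eq:6_28}--\eqref{eq:6_30} and~\eqref{eq:oct1a}--\eqref{eq:oct1c}, which were established earlier precisely to absorb them, so once the derivative has been computed the rest of the argument is a careful but purely mechanical substitution.
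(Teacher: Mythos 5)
Your proposal is correct, and for the first equality it takes a genuinely different route from the paper. The paper never differentiates the octonion kernel directly: it expands $U$ and $V_1$ into the eight parity components \eqref{eq:3_20a}--\eqref{eq:3_20h}, differentiates each purely real sine/cosine integral under the integral sign, matches the resulting components $U_{xyz,f_1}$ with $\pm V_{y'z'}$ term by term, and only then reassembles the octonion sum and reads off the factor $\e_1$ and the sign flips from the parity of each component. You instead differentiate the exponential kernel in \eqref{eq:oct_def} itself, observing that $\partial_{f_j}e^{-\e_{2^{j-1}}\alpha_j}=-2\pi x_j\,e^{-\e_{2^{j-1}}\alpha_j}\e_{2^{j-1}}$ inside the commutative subfield $\C_{\e_{2^{j-1}}}$, and then absorb the inserted imaginary unit with the rearrangement identities \eqref{eq:6_28}--\eqref{eq:6_30}; this reproduces $V_j$ at the reflected frequencies times $\e_{2^{j-1}}$ in one step. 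Your method is shorter and makes the origin of the sign flips transparent (they are exactly the exponent reversals in \eqref{eq:6_28}--\eqref{eq:6_30}), at the cost of leaning on those identities; the paper's parity decomposition avoids all non-associativity issues by working with real integrals throughout, which is why the author reuses it across several theorems. For the second equality your argument essentially coincides with the paper's: both pass the unit $\e_{2^{j-1}}$ from the left of the kernel to the right via the identities of type \eqref{eq:oct1a}--\eqref{eq:oct1b} and relabel the frequency arguments. The only point worth making explicit in a write-up is the justification for differentiating under the integral sign, but the paper operates under the same standing integrability assumptions, so this is not a gap relative to the paper's own standard of rigour.
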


\begin{proof}
We will apply methods analogous to those used in the proof of Theorem~\ref{thm:deriv}. We will prove only the first of the given formulas, the remaining ones are shown in the same way. Consider function $v(\x) = -2\pi x_1u(\x)$ and notice that \begin{align*}
v_{eyz}(\x) = -2\pi x_1 u_{oyz}(\x), \quad y,z\in\{e,o\}, \\
v_{oyz}(\x) = -2\pi x_1 u_{eyz}(\x), \quad y,z\in\{e,o\}.
\end{align*} As in the previous considerations, we can write $U$ and $V$ (the OFT of $v$) as sums of eight components of different parity \begin{align*}
U &= U_{eee} - U_{oee}\e_1 - U_{eoe}\e_2 + U_{ooe}\e_3 - U_{eeo}\e_4 + U_{oeo}\e_5 + U_{eoo}\e_6 - U_{ooo}\e_7, \\
V &= V_{eee} - V_{oee}\e_1 - V_{eoe}\e_2 + V_{ooe}\e_3 - V_{eeo}\e_4 + V_{oeo}\e_5 + V_{eoo}\e_6 - V_{ooo}\e_7,
\end{align*} where $U_{xyz}$, $x,y,z\in\{e,o\}$ are defined as in~\eqref{eq:3_20a}--\eqref{eq:3_20h} and $V_{xyz}$ analogously.

Assuming that the functions $u$ and $v$ are integrable (in the Lebesgue sense), we can differentiate under the integral sign and then we get \begin{align*}
U_{eee,f_1}(\f) &= 
- \int_{\R^3} 2\pi x_1 u_{eee}(\x) \sin(2\pi f_1 x_1) \cos(2\pi f_2 x_2) \cos(2\pi f_3 x_3)\,\de\x = \phantom{-}V_{oee}(\f), \\
U_{oee,f_1}(\f) &= 
\phantom{-} \int_{\R^3} 2\pi x_1 u_{oee}(\x) \cos(2\pi f_1 x_1) \cos(2\pi f_2 x_2) \cos(2\pi f_3 x_3)\,\de\x = -V_{eee}(\f), \\
&\vdots \\
U_{eoo,f_1}(\f) &= 
- \int_{\R^3} 2\pi x_1 u_{eoo}(\x) \sin(2\pi f_1 x_1) \sin(2\pi f_2 x_2) \sin(2\pi f_3 x_3)\,\de\x = \phantom{-}V_{ooo}(\f), \\
U_{ooo,f_1}(\f) &= 
\phantom{-} \int_{\R^3} 2\pi x_1 u_{ooo}(\x) \sin(2\pi f_1 x_1) \sin(2\pi f_2 x_2) \sin(2\pi f_3 x_3)\,\de\x = -V_{eoo}(\f).
\end{align*} Hence \begin{align*}
U_{f_1} &= V_{oee} + V_{eee}\e_1 - V_{ooe}\e_2 - V_{eoe}\e_3 - V_{oeo}\e_4 - V_{eeo}\e_5 + V_{ooo}\e_6 + V_{eoo}\e_7 \\
&= (V_{eee} - V_{oee}\e_1 + V_{eoe}\e_2 - V_{ooe}\e_3 + V_{eeo}\e_4 - V_{oeo}\e_5 + V_{eoo}\e_6 - V_{ooo}\e_7)\cdot\e_1.
\end{align*} Considering the parity of each component we get the first equality in formula~\eqref{eq:oft_der_x1}. Using the fact that for any $o\in\O$ and $\alpha_1,\alpha_2,\alpha_3\in\R$ we have \begin{align*}
\big(((o\cdot\e_1)\cdot e^{-\e_1\alpha_1})\cdot e^{-\e_2\alpha_2}\big)\cdot e^{-\e_4\alpha_3} =
\big(((o\cdot e^{-\e_1\alpha_1})\cdot e^{\e_2\alpha_2})\cdot e^{\e_4\alpha_3}\big)\cdot\e_1
\end{align*} we get the second equality in~\eqref{eq:oft_der_x1}, which concludes the proof.
\end{proof}

At the end of this section we go to one of the most important Fourier transform properties and the most frequently used in signal analysis -- the so-called Convolution theorem. This claim was already signaled in~\cite{BlaszczykECMI2018}, here we present the details of the proof.
\medskip

\begin{theorem} \label{the:3_4}
Let $\mathcal{F}_{\mathrm{OFT}}\{u*v\}$ denote the OFT of the convolution of $u$ and $v$, i.e. \begin{align*}
(u*v)(\x) = \int_{\R^3} u(\y)\cdot v(\x-\y)\,\de\y. 
\end{align*} Then \begin{align*}
\mathcal{F}_{\mathrm{OFT}}\{u*v\}(\f) &=\phantom{+} V(\phantom{-}f_1,\phantom{-}f_2,\phantom{-}f_3) \cdot (\phantom{-}U_{eee}(\f)\phantom{\,\e_1} - U_{eeo}(\f)\,\e_4) \\
&\phantom{=}+ V(\phantom{-}f_1,-f_2,-f_3) \cdot (-U_{oee}(\f)\,\e_1 + U_{ooe}(\f)\,\e_3) \nonumber\\
&\phantom{=}+ V(\phantom{-}f_1,\phantom{-}f_2,-f_3) \cdot (-U_{eoe}(\f)\,\e_2 + U_{oeo}(\f)\,\e_5) \\
&\phantom{=}+ V(-f_1,\phantom{-}f_2,-f_3) \cdot (\phantom{-}U_{eoo}(\f)\,\e_6 - U_{ooo}(\f)\,\e_7), 
\end{align*} where \begin{align*}
U = U_{eee} - U_{oee}\e_1 - U_{eoe}\e_2 + U_{ooe}\e_3 - U_{eeo}\e_4 + U_{oeo}\e_5 + U_{eoo}\e_6 - U_{ooo}\e_7
\end{align*} is a~sum of $8$ terms with different parity with relation to $x_1$, $x_2$, and $x_3$, as in~\eqref{eq:3_20a}--\eqref{eq:3_20h}. 
\end{theorem}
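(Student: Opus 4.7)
My plan is to reduce the theorem to a trigonometric identity plus bookkeeping of products in $\O$, exploiting the fact that $u,v\colon\R^3\to\R$ are real-valued and hence commute and associate freely with every octonion factor. First I would insert the definition of $(u*v)$ into that of $\mathcal{F}_{\mathrm{OFT}}\{u*v\}$, interchange the order of integration by Fubini, and substitute $\mathbf{z}=\x-\y$, so that the left-hand side becomes
$$\int_{\R^3}\!\!\int_{\R^3} u(\y)\,v(\mathbf{z})\cdot e^{-\e_1 2\pi f_1(y_1+z_1)}\,e^{-\e_2 2\pi f_2(y_2+z_2)}\,e^{-\e_4 2\pi f_3(y_3+z_3)}\,\de\y\,\de\mathbf{z}.$$
Each exponential factors on its own axis since $\e_i$ commutes with itself, but the three resulting factors still do not commute across axes.

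Next I would apply Euler's formula $e^{-\e_i\alpha}=\cos\alpha-\e_i\sin\alpha$ together with the angle-addition identities to split every $\cos(\alpha_i^y+\alpha_i^z)$ and $\sin(\alpha_i^y+\alpha_i^z)$ (with $\alpha_i^\xi = 2\pi f_i\xi_i$) into a pure-$\y$ factor times a pure-$\mathbf{z}$ factor. Multiplying the three exponentials together exactly as in the derivation of the OFT kernel then produces a sum of $64$ terms, each of the form $\pm P(\y)\,Q(\mathbf{z})\,\e_k$ with $P,Q$ real products of sines and cosines and $\e_k\in\{1,\e_1,\dots,\e_7\}$. Because every term contains only real scalars and a single imaginary unit, no associativity issue arises; Fubini lets one integrate $\y$ and $\mathbf{z}$ separately, and by \eqref{eq:3_20a}--\eqref{eq:3_20h} each $\y$-integral reproduces one of the eight parity components $U_{xyz}(\f)$ and each $\mathbf{z}$-integral reproduces one of $V_{abc}(\f)$. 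This yields the left-hand side as an explicit signed double sum over the $8\times 8$ pairs $(U_{xyz},V_{abc})$.

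The final step is to reassemble this double sum into the four bracketed products on the right-hand side. For this I would expand each $V$ appearing in the claim, namely at $(f_1,f_2,f_3)$, $(f_1,-f_2,-f_3)$, $(f_1,f_2,-f_3)$, and $(-f_1,f_2,-f_3)$, using the parity identity $V_{abc}(\epsilon_1 f_1,\epsilon_2 f_2,\epsilon_3 f_3)=\varepsilon\,V_{abc}(\f)$, where $\varepsilon$ is the product of those $\epsilon_i$ for which $V_{abc}$ is odd in $f_i$. Multiplying each resulting bracket from the right by its two-term $U$-factor using Table~\ref{tab:2_1} and comparing term-by-term with the expansion from the previous step finishes the proof. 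The main obstacle is precisely this last match: the non-associativity of $\O$ is not a conceptual difficulty, since each elementary product reduces to real-scalar arithmetic composed with a single octonion product, but verifying that all $64$ signs and imaginary units agree is lengthy. The correct grouping on the right is essentially forced by the observation that within each of the four brackets the two $U$-components differ by a factor of $\e_4$, which matches the pattern of sign-flips in the corresponding $V$-argument.
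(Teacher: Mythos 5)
Your proposal is correct, but it follows a genuinely different route from the paper's proof. The paper treats the two factors asymmetrically: after Fubini and the substitution it expands only the $\y$-dependence of the kernel into sines and cosines (via $e^{-\e_i 2\pi f_i(x_i+y_i)}=e^{-\e_i\alpha_i}(\cos\beta_i-\e_i\sin\beta_i)$), keeping the $\x$-dependence in exponential form. This produces just $8$ terms, and a prepared list of eight octonion rearrangement identities (equations \eqref{eq:7_6a}--\eqref{eq:7_6h}) is then used to push each stray unit $\e_1,\e_2,\e_4$ to the far right at the cost of sign flips in the remaining exponents; the inner integral is then literally $V(\pm f_1,\pm f_2,\pm f_3)$ and the outer integral is a parity component $U_{xyz}(\f)$, so the claimed formula drops out after pairing the $8$ summands into the four brackets. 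You instead expand \emph{both} arguments trigonometrically, obtaining the symmetric $8\times 8$ sum over pairs $(U_{xyz},V_{abc})$, and must then re-sum the $V_{abc}$'s into the four evaluations $V(\pm f_1,\pm f_2,\pm f_3)$ using their parities. Your approach buys simplicity of each individual term (real scalars times a single basis unit, so octonion subtleties reduce to fixing the left-to-right order when forming $(\e_{i_1}\e_{i_2})\e_{i_3}$ -- note that this product is \emph{not} order-independent, e.g. $(\e_1\e_2)\e_4=\e_7$ while $\e_1(\e_2\e_4)=-\e_7$, so "no associativity issue arises" is true only after that convention is fixed, as you do implicitly); the paper's approach buys a much shorter final bookkeeping step, at the price of having to verify the eight rearrangement identities for genuine three- and four-factor octonion products. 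Both verifications are finite and mechanical, and your outline identifies all the ingredients needed to complete yours.
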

\medskip

This theorem is the generalization of results presented in~\cite{bulow} and~\cite{Ell1993}. Moreover, if at least one of the functions $u$ or $v$ is even with respect to both $x_1$ and~$x_2$ then the abovementioned formula reduces to the well-known form. However, one should bear in mind the fact that in general the above complicated form this claim is of little use. In the next section we will provide the argument similar to one presented in~\cite{Ell1993} which will give much simpler formula.
\medskip

\begin{proof} [of Theorem~\ref{the:3_4}]
We will use the fact that for every $\alpha_1,\alpha_2,\alpha_3\in\R$ we have \begin{align}
\big((e^{-\e_1\alpha_1}\phantom{\,\,\,\cdot\e_1})\cdot(e^{-\e_2\alpha_2}\phantom{\,\,\,\cdot\e_2})\big)\cdot(e^{-\e_4\alpha_3} \phantom{\,\,\,\cdot\e_4}) &= \big((e^{-\e_1\alpha_1}\cdot e^{-\e_2\alpha_2})\cdot e^{-\e_4\alpha_3} \big), \label{eq:7_6a}\\
\big((e^{-\e_1\alpha_1}\cdot\e_1)\cdot(e^{-\e_2\alpha_2}\phantom{\,\,\,\cdot\e_2})\big)\cdot(e^{-\e_4\alpha_3}\phantom{\,\,\,\cdot\e_4}) &= \big((e^{-\e_1\alpha_1}\cdot e^{\phantom{-}\e_2\alpha_2})\cdot e^{\phantom{-}\e_4\alpha_3} \big)\cdot\e_1, \\
\big((e^{-\e_1\alpha_1}\phantom{\,\,\,\cdot\e_1})\cdot(e^{-\e_2\alpha_2}\cdot\e_2)\big)\cdot(e^{-\e_4\alpha_3}\phantom{\,\,\,\cdot\e_4}) &= \big((e^{-\e_1\alpha_1}\cdot e^{-\e_2\alpha_2})\cdot e^{\phantom{-}\e_4\alpha_3} \big)\cdot\e_2, \\
\big((e^{-\e_1\alpha_1}\cdot\e_1)\cdot(e^{-\e_2\alpha_2}\cdot\e_2)\big)\cdot(e^{-\e_4\alpha_3}\phantom{\,\,\,\cdot\e_4}) &= \big((e^{-\e_1\alpha_1}\cdot e^{\phantom{-}\e_2\alpha_2})\cdot e^{\phantom{-}\e_4\alpha_3} \big)\cdot\e_3, \\
\big((e^{-\e_1\alpha_1}\phantom{\,\,\,\cdot\e_1})\cdot(e^{-\e_2\alpha_2}\phantom{\,\,\,\cdot\e_2})\big)\cdot(e^{-\e_4\alpha_3}\cdot\e_4) &= \big((e^{-\e_1\alpha_1}\cdot e^{-\e_2\alpha_2})\cdot e^{-\e_4\alpha_3} \big)\cdot\e_4, \\
\big((e^{-\e_1\alpha_1}\cdot\e_1)\cdot(e^{-\e_2\alpha_2}\phantom{\,\,\,\cdot\e_2})\big)\cdot(e^{-\e_4\alpha_3}\cdot\e_4) &= \big((e^{-\e_1\alpha_1}\cdot e^{-\e_2\alpha_2})\cdot e^{\phantom{-}\e_4\alpha_3} \big)\cdot\e_5, \\
\big((e^{-\e_1\alpha_1}\phantom{\,\,\,\cdot\e_1})\cdot(e^{-\e_2\alpha_2}\cdot\e_2)\big)\cdot(e^{-\e_4\alpha_3}\cdot\e_4) &= \big((e^{\phantom{-}\e_1\alpha_1}\cdot e^{-\e_2\alpha_2})\cdot e^{\phantom{-}\e_4\alpha_3} \big)\cdot\e_6, \\
\big((e^{-\e_1\alpha_1}\cdot\e_1)\cdot(e^{-\e_2\alpha_2}\cdot\e_2)\big)\cdot(e^{-\e_4\alpha_3}\cdot\e_4) &= \big((e^{\phantom{-}\e_1\alpha_1}\cdot e^{-\e_2\alpha_2})\cdot e^{\phantom{-}\e_4\alpha_3} \big)\cdot\e_7. \label{eq:7_6h}
\end{align}

From the definition of the OFT and the convolution, the Fubini theorem and using integration by substitution we have \begin{align*}
&\!\!\!\!\!\!\!\!\!\!\!\!\!\!\!\!\!\!\!\!\int_{\R^3} \left(\int_{\R^3} u(\y)\cdot v(\x-\y)\,\de\y\right)\cdot e^{-\e_1 2\pi f_1x_1}e^{-\e_2 2\pi f_2x_2}e^{-\e_4 2\pi f_3x_3}\,\de \x \\
&=
\int_{\R^3} u(\y)\cdot \left(\int_{\R^3} v(\x-\y)\cdot e^{-\e_1 2\pi f_1x_1}e^{-\e_2 2\pi f_2x_2}e^{-\e_4 2\pi f_3x_3}\,\de \x\right)\,\de\y \\
&=
\int_{\R^3} u(\y)\cdot \left(\int_{\R^3} v(\x)\cdot e^{-\e_1 2\pi f_1(x_1+y_1)}e^{-\e_2 2\pi f_2(x_2+y_2)}e^{-\e_4 2\pi f_3(x_3+y_3)}\,\de \x\right)\,\de\y = (\star).
\end{align*} Consider the inner integral. We can write the transformation kernel in the following way: \begin{align*}
&\!\!\!\!\!\!\!\!\!\!\!\!\!\!\!\!\!\!\!\!e^{-\e_1 2\pi f_1(x_1+y_1)}e^{-\e_2 2\pi f_2(x_2+y_2)}e^{-\e_4 2\pi f_3(x_3+y_3)} \\
=\phantom{+}\,& 
\big((e^{-\e_1\alpha_1}\cdot\phantom{\e_1}\cos(\beta_1))\cdot(e^{-\e_2\alpha_2}\cdot\phantom{\e_2}\cos(\beta_2))\big)\cdot(e^{-\e_4\alpha_3}\cdot\phantom{\e_4}\cos(\beta_3)) \displaybreak[3]\\
-\,&\big((e^{-\e_1\alpha_1}\cdot\e_1\sin(\beta_1))\cdot(e^{-\e_2\alpha_2}\cdot\phantom{\e_2}\cos(\beta_2))\big)\cdot(e^{-\e_4\alpha_3}\cdot\phantom{\e_4}\cos(\beta_3)) \displaybreak[3]\\
-\,&\big((e^{-\e_1\alpha_1}\cdot\phantom{\e_1}\cos(\beta_1))\cdot(e^{-\e_2\alpha_2}\cdot\e_2\sin(\beta_2))\big)\cdot(e^{-\e_4\alpha_3}\cdot\phantom{\e_4}\cos(\beta_3)) \displaybreak[3]\\
+\,&\big((e^{-\e_1\alpha_1}\cdot\e_1\sin(\beta_1))\cdot(e^{-\e_2\alpha_2}\cdot\e_2\sin(\beta_2))\big)\cdot(e^{-\e_4\alpha_3}\cdot\phantom{\e_4}\cos(\beta_3)) \displaybreak[3]\\
-\,&\big((e^{-\e_1\alpha_1}\cdot\phantom{\e_1}\cos(\beta_1))\cdot(e^{-\e_2\alpha_2}\cdot\phantom{\e_2}\cos(\beta_2))\big)\cdot(e^{-\e_4\alpha_3}\cdot\e_4\sin(\beta_3)) \displaybreak[3]\\
+\,&\big((e^{-\e_1\alpha_1}\cdot\e_1\sin(\beta_1))\cdot(e^{-\e_2\alpha_2}\cdot\phantom{\e_2}\cos(\beta_2))\big)\cdot(e^{-\e_4\alpha_3}\cdot\e_4\sin(\beta_3)) \displaybreak[3]\\
+\,&\big((e^{-\e_1\alpha_1}\cdot\phantom{\e_1}\cos(\beta_1))\cdot(e^{-\e_2\alpha_2}\cdot\e_2\sin(\beta_2))\big)\cdot(e^{-\e_4\alpha_3}\cdot\e_4\sin(\beta_3)) \displaybreak[3]\\
-\,&\big((e^{-\e_1\alpha_1}\cdot\e_1\sin(\beta_1))\cdot(e^{-\e_2\alpha_2}\cdot\e_2\sin(\beta_2))\big)\cdot(e^{-\e_4\alpha_3}\cdot\e_4\sin(\beta_3)) ,
\end{align*} where $\alpha_i=2\pi f_ix_i$, $\beta_i=2\pi f_iy_i$, $i=1,2,3$.
\medskip

Using equations~\eqref{eq:7_6a}--\eqref{eq:7_6h} we get \begin{align*}
(\star)&= \phantom{+}V(\phantom{-}f_1,\phantom{-}f_2,\phantom{-}f_3)\cdot U_{eee}(\f)\phantom{\e_1}-V(\phantom{-}f_1,-f_2,-f_3)\cdot U_{oee}(\f)\e_1 \\
&\phantom{=}-V(\phantom{-}f_1,\phantom{-}f_2,-f_3)\cdot U_{eoe}(\f)\e_2 + V(\phantom{-}f_1,-f_2,-f_3)\cdot U_{ooe}(\f)\e_3 \\
&\phantom{=}-V(\phantom{-}f_1,\phantom{-}f_2,\phantom{-}f_3)\cdot U_{eeo}(\f)\e_4 +V(\phantom{-}f_1,\phantom{-}f_2,-f_3)\cdot U_{oeo}(\f)\e_5 \\
&\phantom{=}+V(-f_1,\phantom{-}f_2,-f_3)\cdot U_{eoo}(\f)\e_6 - V(-f_1,\phantom{-}f_2,-f_3)\cdot U_{ooo}(\f)\e_7,
\end{align*} which, after rearranging the terms, concludes the proof.
\end{proof}

Note that (due to the commutativity of convolution) the following formula is also valid: \begin{align*}
\FourierX{OFT}{u*v}(\f) &=\phantom{+} U(\phantom{-}f_1,\phantom{-}f_2,\phantom{-}f_3) \cdot (\phantom{-}V_{eee}(\f)\phantom{\,\e_1} - V_{eeo}(\f)\,\e_4) \nonumber\\
&\phantom{=}+ U(\phantom{-}f_1,-f_2,-f_3) \cdot (-V_{oee}(\f)\,\e_1 + V_{ooe}(\f)\,\e_3) \nonumber\\
&\phantom{=}+ U(\phantom{-}f_1,\phantom{-}f_2,-f_3) \cdot (-V_{eoe}(\f)\,\e_2 + V_{oeo}(\f)\,\e_5) \nonumber\\
&\phantom{=}+ U(-f_1,\phantom{-}f_2,-f_3) \cdot (\phantom{-}V_{eoo}(\f)\,\e_6 - V_{ooo}(\f)\,\e_7),
\end{align*} where \begin{align*}
V = V_{eee} - V_{oee}\e_1 - V_{eoe}\e_2 + V_{ooe}\e_3 - V_{eeo}\e_4 + V_{oeo}\e_5 + V_{eoo}\e_6 - V_{ooo}\e_7
\end{align*} is a~sum of $8$ terms with different parity.
\medskip 

At the end of this section, we will cite several other results that are important from the point of view of system analysis, i.e. octonion analogues of Parseval-Plancherel Theorems for real-valued functions, which we proved in~\cite{BlaszczykSnopek2017}. 

%
%

\begin{theorem} \label{the:6_11}
Let $u,v\colon\R^3\to\R$ be square-integrable functions (in Lebesgue sense). Then \begin{equation*} 
\innerL{u}{v} = \innerL{U_{\mathrm{OFT}}}{V_{\mathrm{OFT}}},
\end{equation*} where \begin{align*}
\innerL{f}{g} = \int_{\R^3} f(\x)\cdot\conj{g}(\x)\,\de\x
\end{align*} denotes the classical scalar product of functions (real- or octonion-valued) of $3$ variables. 
\end{theorem}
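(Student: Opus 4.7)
The approach is to reduce the octonion identity to eight scalar Plancherel identities via the parity decomposition~\eqref{eq:3_20} of the OFT of a real-valued function. Writing
\begin{equation*}
U_{\mathrm{OFT}} = U_{eee} - U_{oee}\e_1 - U_{eoe}\e_2 + U_{ooe}\e_3 - U_{eeo}\e_4 + U_{oeo}\e_5 + U_{eoo}\e_6 - U_{ooo}\e_7
\end{equation*}
and analogously for $V_{\mathrm{OFT}}$, each component $U_{xyz}$, $V_{xyz}$ is a real-valued function of definite parity in every frequency variable. By expanding $e^{-2\pi\i\f\cdot\x}$ (with $\i=\e_1$) into eight products of sines and cosines and using the parity of $u_{xyz}$, the component $U_{xyz}$ coincides, up to multiplication by a power of $\i$, with $\mathcal{F}_{\mathrm{CFT}}\{u_{xyz}\}$; classical complex Plancherel then yields
\begin{equation*}
\int_{\R^3} U_{xyz}(\f)\,V_{xyz}(\f)\,\de\f = \int_{\R^3} u_{xyz}(\x)\,v_{xyz}(\x)\,\de\x
\end{equation*}
for each of the eight parity classes.

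Next I would expand $U_{\mathrm{OFT}}\cdot V_{\mathrm{OFT}}^{*}$ into $64$ summands by distributivity, which is valid here because the $U_{xyz}$ and $V_{xyz}$ are real scalars and so commute past every imaginary unit. Consulting Table~\ref{tab:2_1}, the diagonal terms $(xyz)=(x'y'z')$, for which $\e_i\cdot\e_j=\pm 1$, contribute to the scalar part. Combining the signs from~\eqref{eq:3_20}, the flipping of imaginary-unit signs in $V_{\mathrm{OFT}}^{*}$, and the identities $\e_k\cdot\e_k=-1$ for $k=1,\ldots,7$ shows that every diagonal contribution carries a uniform positive sign, giving
\begin{equation*}
\Rea\bigl(U_{\mathrm{OFT}}\cdot V_{\mathrm{OFT}}^{*}\bigr) = \sum_{x,y,z\in\{e,o\}} U_{xyz}\,V_{xyz},
\end{equation*}
while all remaining $56$ summands carry a genuine imaginary unit $\e_k$ and are each of the form $\pm U_{xyz}V_{x'y'z'}$ with strictly mismatched parity indices $(xyz)\neq(x'y'z')$.

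Integrating over $\f\in\R^3$ finishes the proof. Summing the eight component Plancherel identities and using that mixed-parity cross integrals $\int_{\R^3} u_{xyz}\,v_{x'y'z'}\,\de\x$ vanish (the integrand is odd in at least one coordinate) gives $\int_{\R^3}\Rea(U_{\mathrm{OFT}}\cdot V_{\mathrm{OFT}}^{*})\,\de\f = \int_{\R^3} u(\x)\,v(\x)\,\de\x$. For every $\e_k$-coefficient of $U_{\mathrm{OFT}}\cdot V_{\mathrm{OFT}}^{*}$, each summand $\pm U_{xyz}V_{x'y'z'}$ with $(xyz)\neq(x'y'z')$ is odd in at least one of $f_1,f_2,f_3$, so its integral over $\R^3$ is zero; hence $\innerL{U_{\mathrm{OFT}}}{V_{\mathrm{OFT}}}$ is in fact scalar and equals $\innerL{u}{v}$. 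The main obstacle is the sign-tracking in the middle step: because octonion multiplication is non-commutative and non-associative, one has to consult Table~\ref{tab:2_1} explicitly and verify that the eight diagonal signs all collapse to $+1$ and that no non-diagonal product can accidentally produce a scalar contribution; once this combinatorial fact is established, the remainder of the argument is routine.
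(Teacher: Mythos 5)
Your proof is correct, and it rests on exactly the machinery the paper relies on for this result: the parity decomposition \eqref{eq:3_20} of the OFT of a real-valued function, reduction of each of the eight components to a classical Plancherel identity, and the vanishing of the mixed-parity cross terms (the paper itself defers the proof to its reference [BlaszczykSnopek2017], which follows the same Hahn--Snopek parity-component approach). Your sign check of the diagonal terms against Table~\ref{tab:2_1} and the observation that the $56$ off-diagonal terms are purely imaginary and odd in at least one frequency variable are both accurate, so nothing is missing.
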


%

In~\cite{BlaszczykEUSIPCO2018} we presented a detailed commentary on these assertions, including indicating the significance of the assumption in Theorem~\ref{the:6_11} that the considered functions are real-valued -- for the octonion-valued functions the claim of Theorem~\ref{the:6_11} doesn't hold. In case of real-valued functions Theorem~\ref{the:6_12} (also known in classical theory as Rayleigh Theorem) is direct corollary of Theorem~\ref{the:6_11}, but (as we proved in~\cite{BlaszczykEUSIPCO2018} and was shown independently in~\cite{Lian2019}) is valid also in the general case of octonion-valued functions.

\begin{theorem} \label{the:6_12}
$L^2$-norm of any function $u\colon\R^3\to\O$ (square-integrable in Lebesgue sense) is equal to the $L^2$-norm of its octonion Fourier transform, i.e. \begin{equation*} 
\norm{u}_{L^2(\R^3)} = \norm{U_{\mathrm{OFT}}}_{L^2(\R^3)},
\end{equation*} where $
\norm{f}_{L^2(\R^3)} = \left(\int_{\R^3} \abs{f(\x)}^2\,\de\x\right)^{1/2}
$ for any square-integrable function $f\colon\R^3\to\O$.
\end{theorem}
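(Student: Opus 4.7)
The plan is to reduce the octonion-valued case to the real-valued Plancherel identity already established as Theorem~\ref{the:6_11}, using the $\R$-linearity of the OFT (Theorem~\ref{the:3_3}) together with the parity decomposition~(\ref{eq:3_20a})--(\ref{eq:3_20h}). First I would write $u=\sum_{i=0}^{7}u_i\e_i$ with $u_i\colon\R^3\to\R$. Since $|u(\x)|^2=\sum_{i=0}^{7}u_i(\x)^2$ pointwise, the desired identity becomes $\|U_{\mathrm{OFT}}\|_{L^2}^2=\sum_{i=0}^{7}\|u_i\|_{L^2}^2$. By $\R$-linearity one has $U_{\mathrm{OFT}}=\sum_{i=0}^{7}U^{(i)}$ with $U^{(i)}:=\FourierX{OFT}{u_i\e_i}$, and expanding $|U_{\mathrm{OFT}}|^2=\sum_{i,j}\Rea\bigl(U^{(i)}\cdot(U^{(j)})^*\bigr)$ splits the remaining task into a diagonal part and $56$ off-diagonal cross terms.

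For each fixed $i$, I would compute the octonion $\e_i\cdot e^{-\e_1 2\pi f_1 x_1}\cdot e^{-\e_2 2\pi f_2 x_2}\cdot e^{-\e_4 2\pi f_3 x_3}$ (read left-to-right) by repeatedly invoking alternativity and Table~\ref{tab:2_1}. The outcome has the form $\sum_{k=0}^{7}\varepsilon_{i,k}\,s_{x_{i,k}}(2\pi f_1 x_1)\,s_{y_{i,k}}(2\pi f_2 x_2)\,s_{z_{i,k}}(2\pi f_3 x_3)\,\e_k$, where $s_e=\cos$, $s_o=\sin$, $\varepsilon_{i,k}\in\{\pm 1\}$, and $\sigma_i\colon(x,y,z)\mapsto k$ is a bijection $\{e,o\}^3\to\{0,\ldots,7\}$. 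Consequently the eight real coordinates of $U^{(i)}$ are, up to signs and the permutation $\sigma_i$, exactly the eight parity components $U_{i,xyz}$ of $\FourierX{OFT}{u_i}$ defined in~(\ref{eq:3_20a})--(\ref{eq:3_20h}); in particular $|U^{(i)}(\f)|^2=\sum_{xyz}U_{i,xyz}(\f)^2=|\FourierX{OFT}{u_i}(\f)|^2$. Applying Theorem~\ref{the:6_11} to the real-valued $u_i$ then yields $\|U^{(i)}\|_{L^2}^2=\|\FourierX{OFT}{u_i}\|_{L^2}^2=\|u_i\|_{L^2}^2$.

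The hard part will be killing the $56$ cross terms. Because $\Rea\bigl(U^{(i)}\cdot(U^{(j)})^*\bigr)(\f)=\sum_{k=0}^{7}(U^{(i)})_k(\f)\,(U^{(j)})_k(\f)=\sum_{k=0}^{7}\varepsilon_{i,k}\varepsilon_{j,k}\,U_{i,\sigma_i^{-1}(k)}(\f)\,U_{j,\sigma_j^{-1}(k)}(\f)$, it is enough to verify that $\sigma_i^{-1}(k)\neq\sigma_j^{-1}(k)$ for every $k$ and every $i\neq j$: each summand will then be a product of two parity components differing in at least one parity label, hence odd in the corresponding frequency variable, and its integral over $\R^3$ vanishes. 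I would establish this uniformly by observing that $\sigma_i^{-1}(k)=(s_1,s_2,s_3)$ is characterized by $((\e_i\cdot\e_1^{s_1})\cdot\e_2^{s_2})\cdot\e_4^{s_3}=\pm\e_k$, and that, since $\O$ has no zero divisors, the $\R$-linear map $o\mapsto((o\cdot\e_1^{s_1})\cdot\e_2^{s_2})\cdot\e_4^{s_3}$ is a bijection of $\O$ that sends each basis element to a signed basis element, hence is a signed permutation of the octonion basis. Therefore it cannot send the distinct basis elements $\e_i$ and $\e_j$ to the same $\pm\e_k$, which forces $\sigma_i^{-1}(k)\neq\sigma_j^{-1}(k)$ and removes the cross terms.

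Assembling the two steps gives $\|U_{\mathrm{OFT}}\|_{L^2}^2=\sum_{i=0}^{7}\|U^{(i)}\|_{L^2}^2=\sum_{i=0}^{7}\|u_i\|_{L^2}^2=\|u\|_{L^2}^2$, which is the claim. The main obstacle is the bookkeeping required to describe, for each $i\in\{0,\ldots,7\}$, the signed permutation $(\sigma_i,\varepsilon_{i,\cdot})$ produced by the left-to-right reassociation of $\e_i$ through the three exponentials; once this description is in hand, the vanishing of the $56$ cross integrals is automatic from the single structural fact that the maps in question are signed permutations of the octonion basis.
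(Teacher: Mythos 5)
Your argument is correct in substance, but note first that the paper does not actually prove Theorem~\ref{the:6_12} in the text: it only records that the identity, an immediate corollary of Theorem~\ref{the:6_11} for real-valued $u$, remains valid for octonion-valued $u$, and defers the proof to the cited earlier works. So there is no in-paper proof to match. Your route -- split $u=\sum_i u_i\e_i$, use $\R$-linearity, obtain the diagonal identities $\norm{\FourierX{OFT}{u_i\e_i}}_{L^2}=\norm{u_i}_{L^2}$ from the parity decomposition \eqref{eq:3_20a}--\eqref{eq:3_20h} together with Theorem~\ref{the:6_11}, and kill the $56$ cross terms by an oddness-in-frequency argument -- is a legitimate, self-contained proof built entirely from the paper's own toolkit. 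The structural fact you use for the cross terms (that $o\mapsto((o\cdot\e_1^{s_1})\cdot\e_2^{s_2})\cdot\e_4^{s_3}$ is an injective map sending basis elements to signed basis elements, hence cannot send $\e_i$ and $\e_j$ with $i\neq j$ to the same $\pm\e_k$) is sound, since right multiplication by a nonzero octonion is injective in a division algebra.

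One step deserves more care than you give it. The assertion that each $\sigma_i$ is a bijection of $\{e,o\}^3$ onto $\{0,\dots,7\}$ -- i.e.\ that for \emph{fixed} $i$ the eight octonions $((\e_i\cdot\e_1^{s_1})\cdot\e_2^{s_2})\cdot\e_4^{s_3}$ lie along eight distinct basis directions -- is a different injectivity statement from the one your signed-permutation observation delivers (injectivity in $i$ for fixed $(s_1,s_2,s_3)$), and because of non-associativity the composite maps for different $(s_1,s_2,s_3)$ do not compare in any obvious way. The assertion is nevertheless true: it can be checked from Table~\ref{tab:2_1} for each $i$, or deduced from the composition-algebra identity $\abs{((\e_i e^{-\e_1\alpha_1})e^{-\e_2\alpha_2})e^{-\e_4\alpha_3}}=1$ for all $\alpha_1,\alpha_2,\alpha_3$, which forces the eight summands of the expanded kernel to be pairwise orthogonal. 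Alternatively you can bypass it entirely: even if some $\sigma_i$ failed to be injective, the extra terms inside $\abs{U^{(i)}}^2$ would again be products of parity components with different labels and would vanish upon integration, so the $L^2$ identity (though not your pointwise claim $\abs{U^{(i)}(\f)}^2=\abs{\FourierX{OFT}{u_i}(\f)}^2$) survives. With that point settled, the proof is complete, modulo the standard caveat (which the paper also elides) that products of two square-integrable parity components are integrable, so the odd-function integrals are legitimately zero.
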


Of course, the above theorem shows that OFT preserves the energy of octonion-valued functions. However, it is worth mentioning the recent result in~\cite{Lian2019}, where the author argues that OFT of octonion-valued function also satisfies the Hausdorff-Young inequality.

\section{Multidimensional linear time-invariant systems}
\label{sec:5}

As we mentioned in the previous section, the formulas in the theorems on the OFT properties are quite complicated and it seems that they can not be applied in practice. In this section we will show that using the notation of quadruple-complex numbers, we can simplify these expressions.
\medskip

We will focus on using the OFT and notion of quadruple-complex numbers in the analysis of 3-D linear time-invariant (LTI) systems of linear partial differential equations (PDEs) with constant coefficients. The classical Fourier transform is well recognized tool in solving linear PDEs with constant coefficients due to the fact, that it reduces differential equations into algebraic equations~\cite{Allen2004}. It is true also in case of the quaternion Fourier transform~\cite{Ell1993} and, as we will present in this section, the octonion Fourier transform. We have already signaled the possibility of this application in~\cite{BlaszczykECMI2018}, here we will develop these considerations and show additional examples.
\medskip

In Section~\ref{sec:3} we derived formulas for the OFT of partial derivatives. We can now reformulate (by straightforward computations) those formulas using the multiplication in $\EE$ algebra.
\medskip

\begin{corollary}
Let $u\colon\R^3\to\R$ and~$U=\FourierX{OFT}{u}$. Then \begin{align*}
U^{\partial x_1}(\f) &= U(\f)\odot(2\pi f_1)\e_1, \\
U^{\partial x_2}(\f) &= U(\f)\odot(2\pi f_2)\e_2, \\
U^{\partial x_1 x_2}(\f) &= U(\f)\odot(2\pi f_1)(2\pi f_2) \e_3, \\
U^{\partial x_3}(\f) &= U(\f)\odot(2\pi f_3)\e_4, \\
U^{\partial x_1 x_3}(\f) &= U(\f)\odot(2\pi f_1)(2\pi f_3)\e_5, \\
U^{\partial x_2 x_3}(\f) &= U(\f)\odot(2\pi f_2)(2\pi f_3)\e_6, \\
U^{\partial x_1 x_2 x_3}(\f) &= U(\f)\odot(2\pi f_1)(2\pi f_2)(2\pi f_3)\e_7.
\end{align*}
\end{corollary}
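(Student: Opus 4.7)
The plan is to verify each of the seven identities by translating the results already established in Theorem~\ref{thm:deriv} and Corollary~\ref{cor:3_1} into the language of $\odot$-multiplication. Since $u$ is real-valued, the decomposition \eqref{eq:3_20} applies and each of the eight parity components $U_{xyz}$, $x,y,z\in\{e,o\}$, is a real-valued function. The key observation driving the computation is that the sign flips in $U(\pm f_1,\pm f_2,\pm f_3)$ appearing on the right-hand sides of Theorem~\ref{thm:deriv} and Corollary~\ref{cor:3_1} correspond exactly to changing the sign of those $U_{xyz}$ that are odd in the flipped variables; this parity reshuffling is precisely what distinguishes the octonion product from the $\odot$-product.

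First I would handle $U^{\partial x_1}$. Expand $U(f_1,-f_2,-f_3)$ via \eqref{eq:3_20}, observing that each $U_{xyz}$ that is odd in $f_2$ or in $f_3$ (but not both) picks up a minus sign, while those odd in both are unchanged. Then right-multiply the resulting octonion by $\e_1$ using Table~\ref{tab:2_1}. Independently, compute $U(\f)\odot\e_1$ using Table~\ref{tab:7_1}. Both calculations should produce the same linear combination of $U_{xyz}$ attached to $1,\e_1,\ldots,\e_7$; the factor $2\pi f_1\in\R$ then commutes and associates trivially. A direct comparison term by term closes the $\partial x_1$ case, and the $\partial x_2$ and $\partial x_3$ cases are entirely analogous after adjusting which variables are flipped in Theorem~\ref{thm:deriv} and which of $\e_2,\e_4$ is used.

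For the mixed-derivative formulas, I would apply the same scheme with $\e_3,\e_5,\e_6,\e_7$ in place of $\e_1,\e_2,\e_4$, drawing the octonion side from Corollary~\ref{cor:3_1} and the quadruple-complex side from the corresponding row/column of Table~\ref{tab:7_1}. Because $(2\pi f_i)(2\pi f_j)$ is real, the only nontrivial matching is again between two $8$-term expansions, and it reduces to checking that the signs and permutation of basis vectors produced by $o\mapsto o\cdot\e_k$ in $\O$ (combined with the parity flips in the arguments of $U$) coincide with those produced by $p\mapsto p\odot\e_k$ in $\EE$.

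The main obstacle is simply bookkeeping: seven formulas, each requiring one to expand an eight-term octonion product and an eight-term $\odot$-product and match them. There is no conceptual difficulty, since the $\odot$-operation has been defined precisely to internalize the parity-flip-plus-basis-permutation pattern that makes the octonion-side formulas cumbersome. Once the $\partial x_1$ case has been written out in detail, the remaining six can be dispatched by pointing out that the same pattern governs them, leaving verification to the reader via Tables~\ref{tab:2_1} and~\ref{tab:7_1}.
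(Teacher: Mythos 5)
Your proposal is correct and matches the paper's (implicit) argument exactly: the paper states this corollary follows ``by straightforward computations'' from Theorem~\ref{thm:deriv} and Corollary~\ref{cor:3_1}, and the computation it has in mind is precisely your term-by-term comparison of the parity expansion \eqref{eq:3_20} under argument sign flips and right multiplication by $\e_k$ in $\O$ (Table~\ref{tab:2_1}) against $U(\f)\odot\e_k$ in $\EE$ (Table~\ref{tab:7_1}). Your observation that the real-valuedness of $u$ makes each $U_{xyz}$ real, so the scalar factors $(2\pi f_i)$ commute freely, is the only hypothesis that needs flagging, and you flag it.
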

\medskip

It is worth noting here the advantages that the above theorem on the partial derivatives transform brings. Let $u\colon\R^3\to\R$ be a~function even with respect to each variable. Both classic and octonion Fourier transforms of $u$ are real-valued functions. Using the classical Fourier transform for the function $u_{x_1x_2}$ we get $-U(\f)\cdot(2\pi f_1)(2\pi f_2)$, and thus also the real-valued function. In a sense, we lose information that the function has been differentiated at all.
In turn, OFT of function $u_{x_1x_2}$ is equal to $U(\f)\cdot(2\pi f_1)(2\pi f_2)\e_3$, therefore it is a~purely imaginary function (only the imaginary part standing next to the unit $\e_3$ will be non-zero). This clearly indicates differentiation with respect to variables $x_1$ and $x_2$.
\medskip

Every linear partial differential equation with constant coefficients can be reduced to algebraic equation (with respect to multiplication in $\mathbb{F}$). Note that in the case of second order equations in which there are no mixed derivatives, this is also true in the sense of multiplication of octonions, e.g. for a~nonhomogeneous wave equation, i.e. \begin{align*}
u_{tt} = u_{x_1x_1} + u_{x_2x_2} + f(t,x_1,x_2)
\end{align*} we have \begin{align*}
\big((2\pi f_1)^2 + (2\pi f_2)^2 -(2\pi\tau)^2 \big) \cdot U(\tau,f_1,f_2) = F(\tau,f_1,f_2),
\end{align*} where $U=\FourierX{OFT}{u}$ and $F=\FourierX{OFT}{f}$. But on the other hand, if we consider the heat equation, i.e. \begin{align*}
u_{t} = u_{x_1x_1} + u_{x_2x_2} + f(t,x_1,x_2),
\end{align*} where we get \begin{align*}
\big((2\pi f_1)^2 + (2\pi f_2)^2 + (2\pi\tau)\e_1 \big) \odot U(\tau,f_1,f_2) = F(\tau,f_1,f_2).
\end{align*} An inverse (in sense of multiplication in $\mathbb{F}$) of $\big((2\pi f_1)^2 + (2\pi f_2)^2 + (2\pi\tau)\e_1 \big)$ exists if and only if $(\tau,f_1,f_2)\neq (0,0,0)$ and is equal to \begin{align*}
\big((2\pi f_1)^2 + (2\pi f_2)^2 + (2\pi\tau)\e_1 \big)^{-1} = \frac{(2\pi f_1)^2 + (2\pi f_2)^2 - (2\pi\tau)\e_1}{\big((2\pi f_1)^2 + (2\pi f_2)^2\big)^2 + (2\pi\tau)^2}.
\end{align*} Hence \begin{align*}
U(\tau,f_1,f_2) = \frac{(2\pi f_1)^2 + (2\pi f_2)^2 - (2\pi\tau)\e_1}{\big((2\pi f_1)^2 + (2\pi f_2)^2\big)^2 + (2\pi\tau)^2}\odot F(\tau,f_1,f_2).
\end{align*} You can not get such a simple formula using multiplication in octonion algebra. Additional theoretical considerations regarding partial differential equations and the use of integral transforms in Cayley-Dickson algebras can be found in \cite{Ludkovsky2010}.
\medskip 

Moreover, the notion of quadruple-complex number multiplication can be used to describe general linear time-invariant systems of three variables and to reduce parallel, cascade and feedback connections of linear systems into simple algebraic equations, as in classical system theory.
\medskip

Consider a~3-D LTI system. We know from the classical signal and system theory that it can be described by its impulse response $h\colon\R^3\to\R$ (sometimes called its Green function) and then, given the input signal $u\colon\R^3\to\R$, the output $v\colon\R^3\to\R$ of such system is given by the formula: \begin{equation*}
v(\x) = \int_{\R^3} u(\y)\cdot h(\x-\y)\,\de\y = (u*h)(\x).
\end{equation*} The output is the convolution of the input signal and the impulse response of the system, which can be schematicaly presented as in Fig.~\ref{fig:7_1}.
\medskip

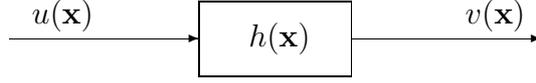
\begin{figure}[t]
\setlength{\unitlength}{10mm}
\begin{center}
\begin{picture}(8.,1.2)
\linethickness{0.075mm}
\put(0.5,0.5){\vector(1,0){2.5}} \put(0.8,0.7){$u(\x)$}

\put(3.0,0.0){\line(0,1){1.0}}
\put(5.0,0.0){\line(0,1){1.0}}
\put(3.0,0.0){\line(1,0){2.0}}
\put(3.0,1.0){\line(1,0){2.0}}

\put(5.0,0.5){\vector(1,0){2.5}}
\put(6.5,0.7){$v(\x)$}
\put(3.65,0.4){$h(\x)$}
\end{picture}

\end{center}
\caption{3-D LTI system.}
\label{fig:7_1}
\end{figure}

\begin{corollary}
The dependence between the OFTs of an input $u$ and an output $v$ of the 3-D LTI system with the impulse response $h$ is given by\begin{align}
V(f_1,f_2,f_3)&=\phantom{+} H_{\mathrm{OFT}}(\phantom{-}f_1,\phantom{-}f_2,\phantom{-}f_3) \cdot (\phantom{-}U_{eee}(\f)\phantom{\,\e_1} - U_{eeo}(\f)\,\e_4) \nonumber\\
&\phantom{=}+ H_{\mathrm{OFT}}(\phantom{-}f_1,-f_2,-f_3) \cdot (-U_{oee}(\f)\,\e_1 + U_{ooe}(\f)\,\e_3) \nonumber\\
&\phantom{=}+ H_{\mathrm{OFT}}(\phantom{-}f_1,\phantom{-}f_2,-f_3) \cdot (-U_{eoe}(\f)\,\e_2 + U_{oeo}(\f)\,\e_5) \nonumber\\
&\phantom{=}+ H_{\mathrm{OFT}}(-f_1,\phantom{-}f_2,-f_3) \cdot (\phantom{-}U_{eoo}(\f)\,\e_6 - U_{ooo}(\f)\,\e_7), \label{eq:7_10}
\end{align} where $V=\FourierX{OFT}{v}$, $U=\FourierX{OFT}{u}$ and $H_{\mathrm{OFT}}=\FourierX{OFT}{h}$ will be called \textit{the octonion frequency response} of the system.
\end{corollary}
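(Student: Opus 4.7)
The plan is to recognize this corollary as an immediate specialization of the convolution theorem (Theorem~\ref{the:3_4}). The defining property of the impulse response $h$ of a 3-D LTI system is precisely that the output corresponding to an input $u$ is the convolution $v = u*h$. Taking the OFT of both sides gives $V = \FourierX{OFT}{u*h}$, and the right-hand side of this is already computed by Theorem~\ref{the:3_4}.

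More concretely, I would apply Theorem~\ref{the:3_4} with the two convolution factors being $u$ and $h$. In the notation of that theorem, the symbol $U$ plays the same role here (the OFT of the input $u$, together with its eight components of different parity $U_{eee},\ldots,U_{ooo}$), while the symbol $V$ there corresponds to $H_{\mathrm{OFT}}$ here. A term-by-term comparison between the formula given by Theorem~\ref{the:3_4} and formula~\eqref{eq:7_10} shows that the two expressions coincide under this relabeling, so no new computation is required.

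The only items to check are the hypotheses of Theorem~\ref{the:3_4}: namely that $u$ and $h$ are real-valued integrable functions of three variables with well-defined OFTs, and that the convolution $u*h$ exists. These are the standing assumptions of the classical LTI framework as set up in this section, so the proof reduces to invoking Theorem~\ref{the:3_4} verbatim and relabeling one of the factors as $h$. There is no substantive obstacle here; the value of the statement lies not in its proof but in the interpretation it provides, namely that the quantity $H_{\mathrm{OFT}}$ deserves the name \emph{octonion frequency response} of the system by analogy with the classical complex case. The payoff of this reformulation will then be exploited in the next step, where the cumbersome eight-term expression~\eqref{eq:7_10} is rewritten in the much simpler form of a single $\odot$-product in the quadruple-complex algebra $\EE$.
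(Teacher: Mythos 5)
Your proposal is correct and matches the paper's (implicit) argument exactly: the corollary is stated immediately after the observation that $v=u*h$, and it is precisely Theorem~\ref{the:3_4} with the second convolution factor relabeled as $h$, so that the $V$ of that theorem becomes $H_{\mathrm{OFT}}$ while the parity components $U_{xyz}$ of the input's OFT keep their role. No further computation is needed, and your remark about the standing real-valuedness and integrability assumptions is consistent with the setup of that section.
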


\begin{corollary}
Formula~\eqref{eq:7_10} can be restated using the multiplication in $\EE$ algebra. We have \begin{align*}
V(\f) = H_{\mathrm{OFT}}(\f)\odot U(\f)
\end{align*} which is the same as the very well known input-output relation in classic signal and system theory.
\end{corollary}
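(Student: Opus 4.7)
The plan is to verify $V = H \odot U$ by unpacking both sides of formula~\eqref{eq:7_10} in the quadruple-complex form $p = s_0 + s_1\e_2 + s_2\e_4 + s_3\e_2\e_4$ (with $s_i \in \C$ generated by $1$ and $\e_1$) and checking that the eight real components match those of the $\odot$-product prescribed by the defining formula of Section~\ref{sec:2} (equivalently, Table~\ref{tab:7_1}).

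First I would apply the parity decomposition~\eqref{eq:3_20} to the real-valued signals $u$ and~$h$, then regroup using the left-to-right convention and Table~\ref{tab:2_1} to obtain the quadruple-complex representation $U = U^{(0)} + U^{(1)}\e_2 + U^{(2)}\e_4 + U^{(3)}\e_2\e_4$ with
\begin{align*}
U^{(0)} &= U_{eee} - U_{oee}\e_1, & U^{(1)} &= -U_{eoe} + U_{ooe}\e_1, \\
U^{(2)} &= -U_{eeo} + U_{oeo}\e_1, & U^{(3)} &= U_{eoo} - U_{ooo}\e_1,
\end{align*}
and analogously for $H_{\mathrm{OFT}}$. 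Next, exploiting the parity of each $H_{xyz}$ in the individual frequency variables, I would rewrite the three reflected arguments appearing in~\eqref{eq:7_10} --- namely $H_{\mathrm{OFT}}(f_1,-f_2,-f_3)$, $H_{\mathrm{OFT}}(f_1,f_2,-f_3)$, $H_{\mathrm{OFT}}(-f_1,f_2,-f_3)$ --- as sign-modified combinations of $H^{(0)},\ldots,H^{(3)}$, with $\e_1$-conjugation of the $\C$-components whenever $f_1$ is flipped. A short computation yields, for instance,
\begin{align*}
H_{\mathrm{OFT}}(f_1,-f_2,-f_3) = H^{(0)} - H^{(1)}\e_2 - H^{(2)}\e_4 + H^{(3)}\e_2\e_4.
\end{align*}

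In parallel, the four bracketed factors multiplied by $H_{\mathrm{OFT}}$ in \eqref{eq:7_10} --- for example $U_{eee}-U_{eeo}\e_4$ and $-U_{oee}\e_1+U_{ooe}\e_3$ --- are exactly the projections of $U$ onto the basis elements $1,\e_2,\e_4,\e_2\e_4$ read off via Table~\ref{tab:2_1}. Substituting these expressions into \eqref{eq:7_10} and expanding term by term using the left-to-right octonion multiplication, the four-summand expression collapses onto the prescription in Table~\ref{tab:7_1}, reproducing precisely the eight real components of $H_{\mathrm{OFT}} \odot U$. The principal obstacle is purely the bookkeeping, since three layers of signs must align --- the parities of $H_{xyz}$ under argument reflections, the signs in Table~\ref{tab:2_1}, and the signs in Table~\ref{tab:7_1} --- but no fundamentally new identity is required: the algebra $\EE$ was constructed in Section~\ref{sec:2} precisely so that the convolution theorem assumes this clean input-output form.
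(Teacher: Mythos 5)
Your overall route --- decompose $U$ and $H_{\mathrm{OFT}}$ into the eight parity components of \eqref{eq:3_20}, regroup them into $\C$-quadruples $U^{(0)}+U^{(1)}\e_2+U^{(2)}\e_4+U^{(3)}\e_2\e_4$, convert the reflected arguments $H_{\mathrm{OFT}}(f_1,-f_2,-f_3)$, etc., into sign/conjugation patterns on $H^{(0)},\ldots,H^{(3)}$ via the parities, and then expand \eqref{eq:7_10} term by term against the defining formula for $\odot$ --- is precisely the ``straightforward computation'' the paper leaves unwritten, and your component formulas for $U^{(0)},\ldots,U^{(3)}$ and for $H_{\mathrm{OFT}}(f_1,-f_2,-f_3)$ check out (e.g.\ the real parts of both sides of $V=H_{\mathrm{OFT}}\odot U$ agree under this bookkeeping). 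One concrete error in your description, however: the four bracketed factors in \eqref{eq:7_10} are \emph{not} the projections of $U$ onto $1,\e_2,\e_4,\e_2\e_4$. Those projections span the unit pairs $\{1,\e_1\}$, $\{\e_2,\e_3\}$, $\{\e_4,\e_5\}$, $\{\e_6,\e_7\}$, whereas the brackets $U_{eee}-U_{eeo}\e_4$, $-U_{oee}\e_1+U_{ooe}\e_3$, $-U_{eoe}\e_2+U_{oeo}\e_5$, $U_{eoo}\e_6-U_{ooo}\e_7$ span $\{1,\e_4\}$, $\{\e_1,\e_3\}$, $\{\e_2,\e_5\}$, $\{\e_6,\e_7\}$: they group the parity components according to which reflection of the other factor's argument they pair with in the convolution identities \eqref{eq:7_6a}--\eqref{eq:7_6h} of Theorem~\ref{the:3_4}, which is a different partition from the quadruple-complex splitting. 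If you expand \eqref{eq:7_10} literally term by term, as you also propose, this does no harm; but if you try to ``collapse'' the four summands of \eqref{eq:7_10} directly onto the four products in the $\odot$-formula through that misidentification, the match fails --- the contributions to the $s_0t_0-s_1t_1-s_2t_2+s_3t_3$ component, for instance, are scattered across all four summands of \eqref{eq:7_10} and must be reassembled at the end rather than read off summand by summand.
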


Consider now the classical connections between the systems, i.e. cascade, parallel and feedback connections. We start with the cascade connection (Fig.~\ref{fig:7_3}), for which we can write \begin{align*}
V(\f) = H_{2,\mathrm{OFT}}(\f)\odot W(\f), \\
W(\f) = H_{1,\mathrm{OFT}}(\f)\odot U(\f),
\end{align*} where $V=\FourierX{OFT}{v}$, $W=\FourierX{OFT}{w}$, $U=\FourierX{OFT}{u}$, $H_{1,\mathrm{OFT}}=\FourierX{OFT}{h_1}$ \linebreak and $H_{2,\mathrm{OFT}}=\FourierX{OFT}{h_2}$. Since the multiplication in $\mathbb{F}$ is commutative and alternative, we obtain \begin{align*}
V(\f) = H_{\mathrm{OFT}}(\f)\odot U(\f),\quad\text{where }\,
H_{\mathrm{OFT}}(\f) = H_{1,\mathrm{OFT}}(\f)\odot H_{2,\mathrm{OFT}}(\f),
\end{align*} as we have for classical (complex) Fourier transform.
\medskip

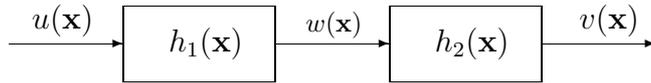
\begin{figure}[h]
\setlength{\unitlength}{10mm}
\begin{center}
\begin{picture}(9.5,1.2)
\linethickness{0.075mm}
\put(0.5,0.5){\vector(1,0){1.5}} \put(0.8,0.7){$u(\x)$}

\put(2.0,0.0){\line(0,1){1.0}}
\put(4.0,0.0){\line(0,1){1.0}}
\put(2.0,0.0){\line(1,0){2.0}}
\put(2.0,1.0){\line(1,0){2.0}}
\put(2.6,0.4){$h_1(\x)$}

\put(4.0,0.5){\vector(1,0){1.5}} \put(4.4,0.66){\footnotesize $w(\x)$}

\put(5.5,0.0){\line(0,1){1.0}}
\put(7.5,0.0){\line(0,1){1.0}}
\put(5.5,0.0){\line(1,0){2.0}}
\put(5.5,1.0){\line(1,0){2.0}}
\put(6.1,0.4){$h_2(\x)$}

\put(7.5,0.5){\vector(1,0){1.5}} \put(8.0,0.7){$v(\x)$}
\end{picture}
\end{center}
\caption{Cascade connection of 3-D LTI systems.}
\label{fig:7_2}
\end{figure}

In the case of parallel connection (Fig.~\ref{fig:7_3}) the computations are much simpler. We get \begin{align*}
V(\f) = H_{\mathrm{OFT}}(\f)\odot U(\f),\quad\text{where }\,
H_{\mathrm{OFT}}(\f) = H_{1,\mathrm{OFT}}(\f)+ H_{2,\mathrm{OFT}}(\f),
\end{align*} the same as in the classical theory.
\medskip

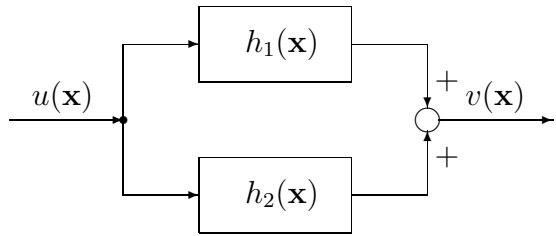
\begin{figure}[h]
\setlength{\unitlength}{10mm}
\begin{center}
\begin{picture}(8.,3.0)
\linethickness{0.075mm}
\put(0.5,1.5){\vector(1,0){1.5}} \put(0.8,1.7){$u(\x)$}
\put(2.0,1.5){\circle*{0.1}}
\put(2.0,0.5){\line(0,1){2.0}}
\put(2.0,2.5){\vector(1,0){1.0}}
\put(3.0,2.0){\line(0,1){1.0}}
\put(5.0,2.0){\line(0,1){1.0}}
\put(3.0,2.0){\line(1,0){2.0}}
\put(3.0,3.0){\line(1,0){2.0}}
\put(5.0,2.5){\line(1,0){1.0}}
\put(6.0,2.5){\vector(0,-1){0.85}}
\put(2.0,0.5){\vector(1,0){1.0}}
\put(3.0,0.0){\line(0,1){1.0}}
\put(5.0,0.0){\line(0,1){1.0}}
\put(3.0,0.0){\line(1,0){2.0}}
\put(3.0,1.0){\line(1,0){2.0}}
\put(5.0,0.5){\line(1,0){1.0}}
\put(6.0,0.5){\vector(0,1){0.85}}
\put(6.0,1.5){\circle{0.3}}
\put(6.15,1.5){\vector(1,0){1.5}}
\put(6.5,1.7){$v(\x)$}
\put(3.6,2.4){$h_1(\x)$}
\put(3.6,0.4){$h_2(\x)$}
\put(6.1,1.9){$+$}
\put(6.1,0.9){$+$}
\end{picture}
\end{center}
\caption{Parallel connection of 3-D LTI systems.}
\label{fig:7_3}
\end{figure}

It gets more complicated if we consider the feedback connection, as in~Fig.\ref{fig:7_4}. We can write the system of equations: \begin{align*}
V(\f) &= H_{1,\mathrm{OFT}}(\f)\odot W(\f), \\
W(\f) &= U(\f) - H_{2,\mathrm{OFT}}(\f)\odot V(\f).
\end{align*} Using the commutativity and associativity of $\odot$ we get \begin{align*}
(1 + H_{1,\mathrm{OFT}}(\f)\odot H_{2,\mathrm{OFT}}(\f)) \odot V(\f) = H_{1,\mathrm{OFT}}(\f)\odot U(\f),
\end{align*} which leads to \begin{align*}
V(\f) = H_{\mathrm{OFT}}(\f)\odot U(\f),\quad\text{where }\, H_{\mathrm{OFT}}(\f) = (1 + H_{1,\mathrm{OFT}}(\f)\odot H_{2,\mathrm{OFT}}(\f))^{-1} \odot H_{1,\mathrm{OFT}}(\f),
\end{align*} and the inverse is in sense of multiplication in~$\mathbb{F}$. The formula is very well-known, but we need to remember that not every element of $\mathbb{F}$ has its inverse. Hence we can see that not every 3-D LTI system can be described with the convolution formula and analyzed with the OFT.
\medskip

\begin{figure}[h]
\setlength{\unitlength}{10mm}
\begin{center}
\begin{picture}(8.,3.0)
\linethickness{0.075mm}
\put(0.5,2.5){\vector(1,0){1.35}} \put(0.8,2.7){$u(\x)$}
\put(2.0,2.5){\circle{0.3}}
\put(2.15,2.5){\vector(1,0){0.85}}
\put(2.0,0.5){\vector(0,1){1.85}}
\put(2.0,0.5){\line(1,0){1.0}}
\put(1.7,2.7){$+$}
\put(2.1,2.1){$-$}
\put(2.2,2.65){\footnotesize $w(\x)$}

\put(3.0,2.0){\line(0,1){1.0}}
\put(5.0,2.0){\line(0,1){1.0}}
\put(3.0,2.0){\line(1,0){2.0}}
\put(3.0,3.0){\line(1,0){2.0}}

\put(5.0,2.5){\vector(1,0){1.0}}
\put(6.0,2.5){\circle*{0.1}}
\put(6.0,0.5){\line(0,1){2.0}}
\put(6.0,0.5){\vector(-1,0){1.0}}

\put(3.0,0.0){\line(0,1){1.0}}
\put(5.0,0.0){\line(0,1){1.0}}
\put(3.0,0.0){\line(1,0){2.0}}
\put(3.0,1.0){\line(1,0){2.0}}

\put(6.0,2.5){\vector(1,0){1.5}}
\put(6.5,2.7){$v(\x)$}
\put(3.6,2.4){$h_1(\x)$}
\put(3.6,0.4){$h_2(\x)$}
\end{picture}
\end{center}
\caption{Feedback connection of 3-D LTI systems.}
\label{fig:7_4}
\end{figure}
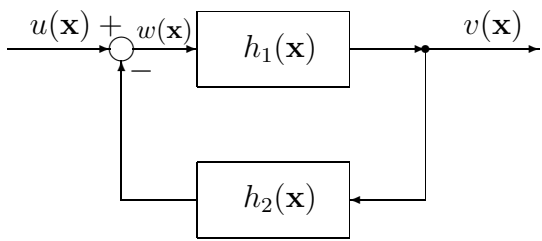

\section{Discussion and conclusions}
\label{sec:6}

It has been shown that the theory of octonion Fourier transforms can be generalized to the case of functions with values in higher-order algebras. Those transforms have properties that are similar to their  classical (complex) counterparts. Octonion analogues of scaling, modulation and shift theorems proved in Section~\ref{sec:3} form the foundation of octonion-based signal and system theory. Properties of the octonion Fourier transform in context of other signal-domain operations, i.e. derivation and convolution of real-valued functions, show that it is a~fine tool in the analysis of multidimensional LTI systems. 
\medskip

It remains to study the discrete case, i.e. discrete-space octonion Fourier transform (DSOFT). Preliminary studies show that the notion of quadruple-complex numbers can be applied to define the DSOFT and to analyze linear difference equations.


\subsubsection*{Acknowledgments}

\noindent The~research conducted by the author was supported by National Science Centre (Poland) grant No. 2016/23/N/ST7/00131.

\bigskip

\bibliographystyle{plain}
\bibliography{bibliography}

\end{document}